\documentclass [leqno,11pt]{amsart}
\usepackage{amssymb,amsmath,latexsym,amsfonts,amsbsy,amsthm,mathtools,graphicx,color,subeqnarray,cases,bm}
\usepackage{float}
\usepackage{hyperref}
\usepackage{scalerel,stackengine}
\stackMath
\newcommand\reallywidehat[1]{%
\savestack{\tmpbox}{\stretchto{%
  \scaleto{%
    \scalerel*[\widthof{\ensuremath{#1}}]{\kern-.6pt\bigwedge\kern-.6pt}%
    {\rule[-\textheight/2]{1ex}{\textheight}}
  }{\textheight}%
}{0.5ex}}%
\stackon[1pt]{#1}{\tmpbox}%
}

\definecolor{myback}{RGB}{204,232,207}

\usepackage{cancel} 
\usepackage[dvipsnames]{xcolor}
\numberwithin{equation}{section}

\allowdisplaybreaks

\numberwithin{equation}{section}

\allowdisplaybreaks

\let\f=\frac

\let\na=\nabla

\let\pa=\partial

\def\eqdef{\buildrel\hbox{\footnotesize def}\over =}

\def\fc{{\mathfrak c}}

\newcommand{\beq}{\begin{equation}}
\newcommand{\eeq}{\end{equation}}
\newcommand{\ben}{\begin{eqnarray}}
\newcommand{\een}{\end{eqnarray}}
\newcommand{\beno}{\begin{eqnarray*}}
\newcommand{\eeno}{\end{eqnarray*}}

\newtheorem{theorem}{Theorem}[section]

\newtheorem{lemma}[theorem]{Lemma}
\newtheorem{proposition}[theorem]{Proposition}
\newtheorem{assumption}[theorem]{Assumption}
\newtheorem{corol}[theorem]{Corollary}
\newtheorem{remark}[theorem]{Remark}

\begin{document}
\title{Instability of shear flows with neutral embedded eigenvalues}

\author{Hui Li}
\address[H. Li]{School of Mathematics, Sichuan University, Chengdu 610064, P. R. China.}
\email{lihui92@scu.edu.cn}

\author{Siqi Ren}
\address[S. Ren]{School of Mathematics Sciences, Zhejiang University of Technology, Hangzhou 310032, P. R. China.}
\email{sirrenmath@zjut.edu.cn}

\author{Yuxi Wang}
\address[Y. Wang]{School of Mathematics, Sichuan University, Chengdu 610064, P. R. China.}
\email{wangyuxi@scu.edu.cn}

\author{Guoqing Zhang}
\address[G. Zhang]{School of Mathematics, Sichuan University, Chengdu 610064, P. R. China.}
\email{zhangguoqing2@stu.scu.edu.cn}

\begin{abstract}
We study the linear stability of a class of monotone shear flows. When the associated Rayleigh operator possesses a neutral embedded eigenvalue, we show that solutions of the linearized system may exhibit arbitrarily large growth in both the $L^\infty$ and $L^2$ norms. Moreover, when the embedded eigenvalue is multiple, we prove that the instability becomes stronger and explicitly construct solutions that grow linearly in time. This instability originates from the non-normality of the Rayleigh operator.
\end{abstract}

\maketitle

\section{Introduction}
We consider the two-dimensional incompressible Euler equation on $\mathbb{T}\times\mathbb{R}$
\begin{equation}\label{eq:Euler}
  \left\{
    \begin{array}{l}
    \pa_tV+V\cdot\na V+\na P=0,\\
     \na\cdot V=0,\\
    V(0,x,y)=V_{\text{in}}(x,y),
    \end{array}
  \right.
\end{equation}
where $V=(V_1,V_2)$ is the velocity field and $P$ is the pressure. Let $\Omega=\pa_xV_2-\pa_y V_1$ be the vorticity. The vorticity form of \eqref{eq:Euler} is
\begin{align}\label{eq:Euler1}
  \pa_t\Omega+V\cdot\na \Omega=0.
\end{align}

Each shear flow $V_s=\left(b(y),0\right)$ is a steady solution of the Euler equation. The linearized equation \eqref{eq:Euler1} around $\left(b(y),0\right)$ is
\begin{align}\label{eq-linEuler}
  \pa_t\omega+b\pa_x\omega-b''\pa_x\psi=0,\quad \psi=\Delta^{-1}\omega,
\end{align}
where $\omega$ and $\psi$ are the vorticity and stream function of the perturbation.

Taking the normal mode ansatz
\begin{align*}
  \psi(t,x,y)=\phi_k(y)e^{ik(x-\fc t)},
\end{align*}
where $k$ denotes the stream-wise wave number and $\fc=\fc_r+i\fc_i$ is the complex wave speed, we obtain from \eqref{eq-linEuler} the Rayleigh equation
\begin{align}\label{eq-Rayleigh}
  (b-\fc)\left(\pa_y^2-k^2\right)\phi_k-b''\phi_k=0,
 \end{align}
subject to the boundary condition $\phi_k(y)\to0$ as $y\to\pm\infty$. Consequently, for shear flows the linear stability problem for
\begin{align}\label{eq-linEuler-k}
  \pa_t\omega(t,y)+ik \mathcal R_{b,k}\omega(t,y) =0,\quad \omega(0,y)=\omega_{\text{in}}(y).
\end{align}
can be reduced to the spectral analysis of the Rayleigh operator
\begin{align*}
  \mathcal R_{b,k}=b \mathrm{Id}-b''(\pa_y^2-k^2)^{-1}.
\end{align*}

When the $\mathcal R_{b,k}$ admits an eigenvalue $\fc$ with $k \fc_i>0$, the linearized system \eqref{eq-linEuler-k} has exponentially growing solutions, and the shear flow is \emph{spectrally unstable}. If the spectrum consists only of continuous spectrum on the real axis, monotone shear flows are known to be linearly stable, and we refer to this case as \emph{spectrally stable}.

This paper focuses on an intermediate regime in which the Rayleigh operator admits embedded eigenvalues on the real axis. We refer to such shear flows as being in a \emph{neutral state}. We show that, despite the absence of exponential growth, neutral shear flows may still exhibit a form of instability induced by the non-normality of the Rayleigh operator.

\subsection{Backgrounds}
The study of shear flow stability dates back to the pioneering works of Rayleigh \cite{Rayleigh1880}, Kelvin \cite{Kelvin1887}, Reynolds \cite{reynolds1883}, Orr \cite{Orr1907}, and Sommerfeld \cite{Sommerfeld1908}. In his classical paper \cite{Rayleigh1880}, Rayleigh established a necessary condition for inviscid instability, now known as the Rayleigh inflection-point criterion, which states that an unstable shear flow must possess an inflection point. This criterion was later refined by Fj\o{}rtoft \cite{fjortoft1950application}. Moreover, Howard \cite{Howard1961} introduced the semicircle theorem, which provides a constraint on the possible location of eigenvalues in the complex plane.

For the Couette flow $b(y)=y$ on $\mathbb R$, the Rayleigh operator has no point spectrum, and its continuous spectrum occupies the entire real axis. Orr \cite{Orr1907} observed that, for modes with $k\neq 0$, the perturbation velocity decays at a polynomial rate in time (see \cite{Case1960,dikii1964} for the channel setting). This phenomenon is known as \emph{inviscid damping}, analogous to Landau damping in plasma physics.  A breakthrough result on nonlinear inviscid damping was obtained by Bedrossian and Masmoudi \cite{BM2015} for the Couette flow with Gevrey-$m$ perturbations ($1\le m<2$); see \cite{IonescuJia2020cmp} for the finite-channel setting, and \cite{LinZeng2011,deng2023long,DZ2021} for instability results.

For general monotone shear flows, Rosencrans and Sattinger \cite{RosSat1966} pointed out the stability of the continuous spectrum. Compared to the fundamental Couette flow, the associated Rayleigh operator contain a non-local term, which introduces significant mathematical difficulties in the analysis of general monotone flows. For spectrally stable monotone flows, Wei-Zhang-Zhao \cite{WeiZhangZhao2018} rigorously proved linear inviscid damping and showed that the decay rates of the velocity are the same as in the Couette case. More recently, Ionescu-Jia \cite{IJ2020} and Masmoudi-Zhao \cite{MasmoudiZhao2020} established nonlinear inviscid damping for spectrally stable monotone shear flows. Further  results on inviscid damping for monotone shear flows can be found in \cite{Zillinger2016,Zillinger2017,Jia2020siam,Jia2020arma,GNRS2020,WeiZhangZhu2020cmp,Ren2025}. For radially symmetric flows and non-monotone shear flows, there are also related stability results; we refer the reader to \cite{BCV2017,Zillinger2017jde,CZ2019,IonJia2022,RWWZ2023} and \cite{WeiZhangZhao2019,WeiZhangZhao2020,ionescu2024stability,BG2024,BCJ2026} respectively.

For shear flows in a neutral state, the presence of embedded eigenvalues makes the asymptotic behavior of the linearized system more complicated, and existing theoretical results remain limited. In \cite{stepin1996rayleigh}, monotone flows with embedded eigenvalues were studied, and a formal eigenfunction expansion associated with the continuous spectrum and the embedded eigenvalue of problem \eqref{eq-linEuler-k} was derived. The hyperbolic tangent flow $b(y)=\tanh y$ is a canonical example of a neutral-state flow. For the mode $k=1$, the corresponding Rayleigh operator possesses a unique embedded eigenvalue $\fc_*=0$. In \cite{RenZhang2025}, the second author and Zhang study the asymptotic stability  for $b(y)=\tanh y$, after removing the eigenspace associated with $\fc_*=0$ the remaining component of the solution (corresponding to the continuous spectrum) exhibits inviscid damping. However, due to the influence of the embedded eigenvalue, the spectral density function develops singularities, and consequently the decay rate of inviscid damping is slower than that for spectrally stable shear flows.

Neutral states and the spectrally unstable states are closely related. In \cite{lin2003instability}, Lin proves that for shear flows in the class $\mathcal K^+$, if the Rayleigh operator has an embedded eigenvalue with wave number $k=k_{max}$, then it necessarily has an unstable eigenvalue for all $k\in(0,k_{max})$ (See also related work in \cite{LinZeng2011,li2011resolution}). In \cite{sinambela2023transition}, Sinambela-Zhao constructed spectrally unstable shear flows in a neighborhood of shear flows in neutral states. More recently, the first author and Zhao \cite{LiZhao2025} show that, under viscous effect, a shear flow may evolve from spectrally stable to neutral, and subsequently to spectrally unstable state, thereby demonstrating the destabilizing effect of viscosity.

In all of the above works, neutral shear flows primarily serve as an intermediate step toward constructing or understanding spectrally unstable flows. In contrast, in the present paper we focus on the instability inherent to neutral shear flows themselves. Our main results are as follows.

\subsection{Main results} Let us now list some assumptions on the shear flow $b(y)$.

\begin{assumption}\label{assum}
Assume the shear flow $b(y)$ satisfies the following conditions:
  \begin{itemize}
\item (Monotone) There is $0<c_m<1$ such that for all $y\in\mathbb{R}$, it holds that $$0<c_m\leq b'(y)\leq c_m^{-1}.$$

\item (Regularity and integrability)  $b''(y)\in H^4(\mathbb R)$.
\item (Embedded eigenvalue) The Rayleigh operator $\mathcal R_{b,k}$ has a single embedded eigenvalue and has no other eigenvalues.
\end{itemize}
\end{assumption}

\begin{theorem}\label{thm}
  Let the background shear flow $b(y)$ satisfy Assumption \ref{assum}. For any $M>0$, there exist $\omega_{\text{in}}$ and $T$ such that
  \begin{align*}
    \left\|\omega_{\text{in}}(y)\right\|_{L^\infty(\mathbb R)}+\left\|\omega_{\text{in}}(y)\right\|_{L^2(\mathbb R)}=1,
  \end{align*}
  and the solution $\omega(t,y)$ to \eqref{eq-linEuler-k} with initial data $\omega_{\text{in}}$ satisfies
  \begin{align*}
    \left\|\omega(T,y)\right\|_{L^\infty(\mathbb R)}\ge M,\quad \left\|\omega(T,y)\right\|_{L^2(\mathbb R)}\ge M.
  \end{align*}
\end{theorem}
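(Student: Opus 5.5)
Denote by $\fc_*\in\mathbb R$ the embedded eigenvalue and by $\phi_*$ the corresponding eigenfunction of the Rayleigh equation \eqref{eq-Rayleigh}. The strategy is to exhibit non-normality quantitatively: we build initial data that are almost parallel to the eigenvector $\omega_*=(\pa_y^2-k^2)\phi_*$, but whose projection onto the eigenspace is huge compared with the data themselves. The evolution then separates the non-oscillating eigen-component $e^{-ik\fc_* t}\omega_*$ from the continuous-spectrum part, which disperses by inviscid damping. At a suitable time $T$ the solution is therefore dominated by the large eigen-component.

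\textbf{Step 1: regularity of the eigenfunction.} From the Rayleigh equation, $\omega_*=b''\phi_*/(b-\fc_*)$. Because $b$ is monotone, $b(y_*)=\fc_*$ at a single point $y_*$. An embedded eigenvalue of a monotone flow requires $b''(y_*)=0$, so $\omega_*$ is bounded. The assumption $b''\in H^4$ then gives $\omega_*\in L^2\cap L^\infty$ with some smoothness.

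\textbf{Step 2: the dual (left) eigenvector.} Let $\mathcal R^*_{b,k}=b-(\pa_y^2-k^2)^{-1}b''$ be the adjoint. Formally, $\mathcal R^*_{b,k}\eta=\fc_*\eta$ is solved by $\eta=\phi_*/(b-\fc_*)$ up to a principal-value/delta correction at $y_*$; this is the singular object identified in \cite{stepin1996rayleigh,RenZhang2025}. The projection $P\omega=\langle\omega,\eta\rangle\,\omega_*/\langle\omega_*,\eta\rangle$ is then an unbounded (or at least badly conditioned) functional on $L^2\cap L^\infty$.

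The key claim is that the family of data
$$\omega_{\rm in}^\epsilon=\omega_*-f_\epsilon,\qquad f_\epsilon\ \text{a continuous-spectrum approximation of }\omega_*\ \text{concentrated near } y_*,$$
has norm $\|\omega^\epsilon_{\rm in}\|\to0$ while its eigen-coefficient stays of order one. After normalization, the eigen-component then has size $\gtrsim M$.

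A cleaner way to run this is through the representation formula
$$\omega(t)=\frac{1}{2\pi i}\int_{\mathbb R}e^{-ikct}\big[(c+i0-\mathcal R)^{-1}-(c-i0-\mathcal R)^{-1}\big]\omega_{\rm in}\,dc+e^{-ik\fc_* t}P\omega_{\rm in}.$$
The limiting resolvent has a pole at $c=\fc_*$, with residue $P$. Take data $\omega_{\rm in}=(c_\epsilon-\mathcal R_{b,k})g$ with $c_\epsilon=\fc_*+i\epsilon$ and $g$ smooth. Then
$$P\omega_{\rm in}=i\epsilon\,Pg\qquad\text{and}\qquad \|\omega_{\rm in}\|\approx \|(b-\fc_*-i\epsilon)g+\dots\|.$$
Choose $g$ localized where $b\approx\fc_*$ so that the ratio is large. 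Concretely, take $g=\omega_*\chi((y-y_*)/\delta)$ and adjust the parameters.

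\textbf{Step 3: time evolution.} The continuous part decays: by the limiting absorption/inviscid damping estimates of \cite{WeiZhangZhao2018,RenZhang2025} adapted to this $b$, it tends to zero weakly, and for smooth data like $\omega_{\rm in}$ it is bounded in $L^2$ by a fixed multiple of its initial norm. The eigen-part $e^{-ik\fc_*t}P\omega_{\rm in}$ has constant norm $|\text{coef}|\,\|\omega_*\|_{L^2\cap L^\infty}\ge 2M$. Choosing $T$ large and using that $\omega(T)\to P\omega_{\rm in}$ in an averaged or weak sense, we get
$$\|\omega(T)\|_{L^2}\ge M.$$
For $L^\infty$ we argue similarly, since the eigenfunction's sup norm is attained at a fixed point while the continuous part develops oscillations.

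\textbf{Main obstacle.} The hardest part is Step 2: controlling the continuous-spectrum part uniformly, since its spectral density is singular near $\fc_*$. I would avoid full decay estimates. Instead, pair $\omega(T)$ with a fixed smooth test function $\varphi$, chosen so that $\langle\omega_*,\varphi\rangle\ne0$. The continuous contribution $\langle\omega_c(T),\varphi\rangle\to0$ as $T\to\infty$ by a Riemann–Lebesgue argument on the spectral integral; this needs only integrability of the density, which follows from the $H^4$ regularity. Then
$$|\langle\omega(T),\varphi\rangle|\gtrsim |\text{coef}|,$$
and this lower bound controls $\|\omega(T)\|_{L^2}$ and, for $\varphi\in L^1$, also $\|\omega(T)\|_{L^\infty}$.
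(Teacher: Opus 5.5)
Your heuristic matches the paper's: non-normality, a singular dual eigenvector, a large eigen-coefficient for unit-size data, and dispersion of the rest. However, both quantitative steps have genuine gaps, and one case is missing.

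\textbf{Construction of the data.} You never produce data with $\|\omega_{\text{in}}\|_{L^2}+\|\omega_{\text{in}}\|_{L^\infty}=1$ and an arbitrarily large eigen-coefficient. Up to terms bounded by $\|\omega\|_{L^2}+\|\omega\|_{L^\infty}$, the dual functional is $\mathcal J_3(\omega,0)+i\mathcal J_4(\omega,0)$, where
\begin{itemize}
\item $\mathcal J_3(\omega,0)\approx b'(y_*)^{-2}\,\mathrm{P.V.}\int\omega(y)(y-y_*)^{-1}dy$ is a principal value;
\item $\mathcal J_4(\omega,0)=\pi\omega(y_*)/b'(y_*)^2$ is a point value.
\end{itemize}
Your concrete choice $\omega_{\text{in}}=(\fc_*+i\epsilon-\mathcal R)g$ with $g=\omega_*\chi((y-y_*)/\delta)$ is essentially a bump of width $\delta$, smooth at that scale. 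The principal value is scale invariant, so both parts are controlled by the height of the bump. Hence $|P\omega_{\text{in}}|\lesssim\|\omega_{\text{in}}\|_{L^\infty}$ for this family, whatever $\epsilon,\delta$ you choose. Such data make the ratio large only relative to the $L^2$ norm, as in the paper's remark with $Ze^{-Z^4y^2}$. That violates the normalization in the theorem.

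The missing idea is that the unbounded direction of the projection on $L^2\cap L^\infty$ is the logarithmic divergence of the principal-value part over many scales. The paper takes $\omega_{\text{in}}=\frac12\chi_Z$, of size one on $[2/Z,1/2]$ and vanishing near $y_*=0$. This gives $\mathcal J_4=0$ and $\mathcal J_3\gtrsim\ln Z$ with $O(1)$ norms. Asserting that $P$ is ``unbounded (or badly conditioned)'', or that suitable $f_\epsilon$ exist, does not replace this computation.

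\textbf{Time evolution.} The evolution step fails as written.
\begin{itemize}
\item Since $\mathcal J_1(0)=\mathcal J_2(0)=0$, the density $\frac{\mathcal J_1\mathcal J_4+\mathcal J_2\mathcal J_3}{\mathcal J_1^2+\mathcal J_2^2}$ has a $1/\fc_r$ singularity at $\fc_*$. It is not integrable, so Riemann--Lebesgue does not apply.
\item The limits of $\fc\Phi$ from $\fc_i>0$ and $\fc_i<0$ generally differ, so there is no single residue $P$.
\item The principal-value part does not decay: $\mathrm{P.V.}\int\chi(\fc)e^{-i\fc t}\fc^{-1}d\fc\to-i\pi$, which leaves a non-decaying multiple of $\Gamma(y,0)$.
\end{itemize}
Only after adding this piece to the small-circle contribution (the average of the two one-sided residues) do you get the true asymptotic coefficient $\frac{\mathcal J_3(\omega_{\text{in}},0)+i\mathcal J_4(\omega_{\text{in}},0)}{\partial_{\fc_r}\mathcal J_1(0)-i\partial_{\fc_r}\mathcal J_2(0)}$. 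This is not cosmetic. The symmetric part alone, $\frac{\partial_{\fc_r}\mathcal J_1(0)\mathcal J_3-\partial_{\fc_r}\mathcal J_2(0)\mathcal J_4}{(\partial_{\fc_r}\mathcal J_1(0))^2+(\partial_{\fc_r}\mathcal J_2(0))^2}$, vanishes for $\mathcal J_4=0$ data whenever $\partial_{\fc_r}\mathcal J_1(0)=0$.

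The paper isolates the non-decaying piece through $\Psi_\chi$, which is $O(1/t)$. It then proves $\|\Psi_2(t)\|_{L^2}\lesssim t^{-1}\|\omega_{\text{in}}\|_{H^2}$ by duality. Once the weak limit is correctly identified, your pairing with a fixed $\varphi\in L^1\cap L^2$ would be a legitimate way to finish. For the $L^\infty$ bound it is simpler than the paper's $H^1$ interpolation.

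\textbf{Multiple eigenvalue case.} Everything you write presupposes a simple pole, $\partial_{\fc_r}\mathcal J_1(0)+i\partial_{\fc_r}\mathcal J_2(0)\neq0$. Assumption~\ref{assum} also allows a multiple embedded eigenvalue. In that case there is no time-independent eigen-component, and the paper handles it separately via the associated function of Theorem~\ref{thm2}. Your proposal does not cover it.
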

For a normal operator $L$ on a Hilbert space, the spectral theorem implies that if its spectrum is contained in the real axis, then the solution $\omega(t,y)=e^{iLt}\omega_{\text{in}}(y)$ does not exhibit growth in the norm of the Hilbert space. However, for general shear flows, the associated Rayleigh operator is non-normal, and it is this non-normality that gives rise to the instability proved in the theorem.

Under Assumption \ref{assum}, the embedded eigenvalue $\fc_*=\fc_{*,r}$ at most corresponding to one eigenfunction with velocity field in $H^1(\mathbb R)$, thus the geometric multiplicity of $\fc_*$ is $1$. We call $\fc_*$ a simple eigenvalue if its algebraic multiplicity is $1$  (see also \cite{stepin1996rayleigh}). Here the algebraic multiplicity means the order of the zero of the Wronskian defined in \eqref{eq-Wronskian-realline}. If the algebraic multiplicity of $\fc_*$ is greater than $1$, we call $\fc_*$ a multiple eigenvalue.

\begin{theorem}\label{thm2}
  Let the background shear flow $b(y)$ satisfy Assumption \ref{assum}.  If, in addition,  the embedded eigenvalue is a multiple eigenvalue, then there exists $\omega_{\text{in}}\in L^\infty(\mathbb R)\cap L^2(\mathbb R)$ and $C>0$ such that
  \begin{align*}
    \left\|\omega(t,y)\right\|_{L^\infty(\mathbb R)}\ge Ct,\quad   \left\|\omega(t,y)\right\|_{L^2(\mathbb R)}\ge Ct,\text{ for }\forall t\ge0.
  \end{align*}
\end{theorem}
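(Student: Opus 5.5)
The natural idea is to build a Jordan chain for $\mathcal R_{b,k}$ at the embedded eigenvalue $\fc_*$. Since $\fc_*$ is a multiple eigenvalue, the Wronskian vanishes to order at least two there. We would like a generalized eigenfunction $\omega_1$ satisfying
\begin{align*}
(\mathcal R_{b,k}-\fc_*)\omega_1=\omega_0,
\end{align*}
where $\omega_0=(\pa_y^2-k^2)\phi_0$ is the embedded eigenfunction. If such a pair exists with $\omega_0,\omega_1\in L^\infty\cap L^2$, the choice $\omega_{\text{in}}=\omega_1$ gives the exact solution
\begin{align*}
\omega(t)=e^{-ik\fc_* t}\bigl(\omega_1-ikt\,\omega_0\bigr).
\end{align*}
This is verified directly from \eqref{eq-linEuler-k}:
\begin{align*}
\pa_t\omega=-ik\fc_*\omega-ik e^{-ik\fc_*t}\omega_0,\qquad ik\mathcal R_{b,k}\omega=ik\fc_*\omega+ike^{-ik\fc_*t}\omega_0,
\end{align*}
using $\mathcal R_{b,k}\omega_0=\fc_*\omega_0$. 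The triangle inequality then gives
\begin{align*}
\|\omega(t)\|_X\ge |k|t\|\omega_0\|_X-\|\omega_1\|_X\qquad (X=L^\infty,\ L^2).
\end{align*}
This yields $\ge Ct$ for $t\ge t_0$. For $t\in[0,t_0]$ we need a lower bound on $\|\omega(t)\|$ that is uniformly positive, since then $\ge Ct$ holds on this bounded interval after shrinking $C$. That bound follows because $\omega_0$ and $\omega_1$ are linearly independent: $\omega_1$ is not an eigenvector, so $\omega_1-ikt\omega_0\neq0$ for every $t$, and by continuity and compactness its norm has a positive minimum on $[0,t_0]$.

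\textbf{Embedded eigenfunction.} Since $b$ is monotone, $b''(y_*)=0$ at $y_*=b^{-1}(\fc_*)$, which is necessary for an $H^1$ eigenfunction of Rayleigh. Then $\omega_0=b''\phi_0/(b-\fc_*)$ is smooth and lies in $L^2\cap L^\infty$ by the regularity of $b''\in H^4$ and the exponential decay of $\phi_0$ at infinity.

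\textbf{Generalized eigenfunction.} We write $\omega_1=(\pa_y^2-k^2)\phi_1+\text{(correction)}$. Directly, $(b-\fc_*)\omega_1-b''(\pa_y^2-k^2)^{-1}\omega_1=\omega_0$. Setting $\psi_1=(\pa_y^2-k^2)^{-1}\omega_1$, this becomes the inhomogeneous Rayleigh equation
\begin{align*}
(b-\fc_*)(\pa_y^2-k^2)\psi_1-b''\psi_1=\omega_0.
\end{align*}
Equivalently,
\begin{align*}
(\pa_y^2-k^2)\psi_1-\frac{b''}{b-\fc_*}\psi_1=\frac{\omega_0}{b-\fc_*}=\frac{b''\phi_0}{(b-\fc_*)^2}.
\end{align*}
Taking the derivative in $\fc$ of the family of Rayleigh solutions, the natural candidate is $\psi_1=\pa_\fc\phi(\cdot,\fc)|_{\fc=\fc_*}$. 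Here $\phi(\cdot,\fc)$ solves \eqref{eq-Rayleigh} with data decaying at $-\infty$, and formally $\pa_\fc$ of \eqref{eq-Rayleigh} gives exactly the equation above.

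\textbf{Main obstacle.} The key difficulty is showing that $\psi_1$ decays at both ends, and that $\omega_1=(b-\fc_*)^{-1}(b''\psi_1+\omega_0)$ is bounded near the critical layer $y_*$. Decay at $-\infty$ is by construction. At $+\infty$, the decaying branch there behaves like $\phi_+(\cdot,\fc)$. Writing $\phi_-(\cdot,\fc)=\alpha(\fc)\phi_+(\cdot,\fc)+\beta(\fc)\times(\text{growing})$ with $\beta\propto W(\fc)$, the hypothesis $W(\fc_*)=W'(\fc_*)=0$ kills the growing part of $\pa_\fc\phi_-$ at $\fc_*$. Care is needed near $\fc=\fc_*$ on the real axis, where Rayleigh solutions carry a logarithmic singularity $\sim(b''(y_*)/b'(y_*))(y-y_*)\log|y-y_*|$. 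Since $b''(y_*)=0$ this term is absent at $\fc_*$, but its $\fc$-derivative may not be. We would handle this by working with the limits $\fc\to\fc_*\pm i0$, using the Wronskian \eqref{eq-Wronskian-realline} defined there, and checking that the numerator $b''\psi_1+\omega_0$ vanishes at $y_*$ so that $\omega_1$ is bounded. If genuine boundedness fails, we would fall back on a regularized $\omega_1^\epsilon$ that solves the equation up to an error whose evolution grows at most sublinearly. We expect this critical-layer regularity to be the hardest step.
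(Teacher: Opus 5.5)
\textbf{Verdict: genuine gap.} Your architecture matches the paper's. You take an associated function $\omega_1$ with $(\mathcal R-\fc_*)\omega_1=\omega_0$ as initial data, write the exact solution $e^{-ik\fc_*t}(\omega_1-ikt\,\omega_0)$, and read off linear growth; your compactness argument for small $t$ is fine and more careful than the paper's. But the existence of $\omega_1\in L^2\cap L^\infty$ is the entire content of the theorem, and you leave it open. You say the $\fc$-derivative of the logarithmic critical-layer term ``may not'' vanish, and you propose an unspecified regularized fallback.

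The missing observation is that the hypothesis already settles this. Multiplicity means both one-sided boundary values $\mathcal J_1\mp i\mathcal J_2$ of the Wronskian vanish to second order at $\fc_*$. The imaginary part then gives $\pa_{\fc_r}\mathcal J_2(\fc_*)=\pi\, b'''(y_*)/b'(y_*)^4=0$ (using $b''(y_*)=0$), i.e.\ $b'''(y_*)=0$. This fact does two things:
\begin{enumerate}
\item It is exactly the vanishing of the $\fc$-derivative of the log coefficient $b''(y_\fc)/b'(y_\fc)$. In the paper it appears as the boundedness of $\pa_{\fc_r}\Pi_1^\pm(y,\fc_r)$ as $y\to y_*$; otherwise the integrand behaves like $-b'''(y_*)/(b'(y_*)^3(y-y_*))$ and the integral diverges logarithmically.
\item It is exactly the vanishing of your numerator. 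With the normalization $\Gamma(y_*,0)=-1/b'(y_*)$ one has $\omega_0(y_*)=-b'''(y_*)/b'(y_*)^2$. If this were nonzero, then, since any $L^2$ solution has $\psi_1\in H^2$ bounded, $\omega_1=(\omega_0+b''\psi_1)/(b-\fc_*)\sim\omega_0(y_*)/(b'(y_*)(y-y_*))$ would not even be locally $L^2$.
\end{enumerate}
So no associated function exists in the case you were worried about, the hypothesis excludes that case, and your fallback targets a situation that does not occur.

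For the gluing, your argument ($\phi_-=\alpha\phi_++\beta\cdot(\text{growing})$ with $\beta\propto W$) is valid for $\fc_i\neq0$. Transferring it to $\fc_*$ needs two things you did not supply:
\begin{enumerate}
\item Vanishing of both one-sided derivatives $\pa_{\fc_r}(\mathcal J_1\mp i\mathcal J_2)(\fc_*)$. Since $W$ is not analytic across the real axis, ``$W'(\fc_*)=0$'' must be read this way.
\item Control of $\pa_\fc\phi_-$ up to the real axis near $y_*$. For real $\fc\neq\fc_*$ the continuation of the $-\infty$-decaying solution across $y_\fc$ is not canonical: the $\pm i\pi$ jump is proportional to $b''(y_\fc)$.
\end{enumerate}

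The paper avoids this by never crossing the critical layer. It differentiates $\varphi^-(y,\fc_r)$ in $\fc_r$ only for $y<y_\fc$ and $\varphi^+$ only for $y>y_\fc$, so each decays automatically at its own end. Proposition~\ref{prop-linear-growth} then shows that the one-sided limits at $y_*$ of $\pa_{\fc_r}\varphi^\pm$ coincide if and only if $\mathcal J_1=\mathcal J_2=0$. It also shows that those of $\pa_y\pa_{\fc_r}\varphi^\pm$ coincide if and only if, in addition, $\pa_{\fc_r}\mathcal J_1=\pa_{\fc_r}\mathcal J_2=0$; the real-part condition is the rigorous form of your ``kills the growing part''. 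The glued function $\pa_{\fc_r}\Gamma(\cdot,0)$ is in $H^2$ with bounded second derivative and solves $(b-\fc_*)(\pa_y^2-1)\psi-b''\psi=(\pa_y^2-1)\Gamma(\cdot,0)$. Hence $\omega_1=(\pa_y^2-1)\pa_{\fc_r}\Gamma(\cdot,0)$ is the associated function your growth argument needs.
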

\begin{remark}
 For monotone shear flows without embedded eigenvalues and eigenvalues, the instability mentioned above does not occur. In that case, it has been proved that the vorticity satisfies a uniform $L^2$ bound, see, for example, \cite{WeiZhangZhao2018,Jiahao2022,LiZhao2024}.  

 The instability in  Theorem \ref{thm} is exactly the classical instability, i.e., given $\epsilon_0>0$, for any $\delta\in(0,1)$, there exist $\tilde{\omega}_{\text{in}}$ and $T>0$ such that $\|\tilde{\omega}_{\text{in}}\|_{X}\leq \delta$,  $\|\tilde{\omega}(T)\|_{X}\ge \epsilon_0$. It suffices to consider $\delta=\epsilon_0/M$ and replace  $\tilde{\omega}_{\text{in}}=\delta\omega_{\text{in}}$, $\tilde{\omega}(t)=\delta\omega(t)$ in Theorem \ref{thm}. 

The instability in Theorem \ref{thm2} is stronger than the one in Theorem \ref{thm}, which means that the shear flow with multiple embedded eigenvalues is more unstable than the one with simple embedded eigenvalues.

In \cite{LiZhao2025}, the first author and Zhao show the existence of shear flows that satisfies Assumption \ref{assum}. By using similar techniques, we can show the existence of monotone shear flows that with a multiple embedded eigenvalue, see the appendix for details.
\end{remark}
\begin{remark}
  In this paper, we assume that the Rayleigh operator $\mathcal R_{b,k}$ has only a single embedded eigenvalue for simplicity. Using the same techniques, one can show that when $\mathcal R_{b,k}$ has finite embedded eigenvalues, the same type of instability still occurs.

  The techniques developed in this paper may also be applied to bounded monotone shear flows, such as the generalization of  $b(y)=\tanh y$, as well as to monotone flows in a finite channel.

  For the channel case $y\in[0,1]$, one may construct a smooth solution $\phi(y,\fc)$ of \eqref{eq-Rayleigh1} satisfying $\phi(0,\fc)=0$ and $\phi'(0,\fc)=1$. In this setting, the Wronskian can be regard as $W(\fc)=\phi(1,\fc)$ (see \cite{WeiZhangZhao2018,lin2003instability}). Therefore, if there exists $\fc_*\in \mathrm{Ran}(b)$ such that $\phi(1,\fc_*)=0$ and $\pa_{\fc}\phi(1,\fc_*)=0$, then $\fc_*$ is a multiple embedded eigenvalue. As noted in \cite{stepin1996rayleigh} there exists an associated function satisfying the boundary condition. Then, by the same technique used in the proof of Theorem \ref{thm2}, one can construct a solution that grows linearly in time. The same argument applies to non-monotone flows as well, provided that suitable definitions of $\phi(y,\fc)$ and the Wronskian are available.
\end{remark}

For the viscous fluid, the linear system system around the shear flow $b(y)$ is
\begin{align}\label{eq-linNS-k}
  \pa_t\omega(t,y)+\mathcal O_{b,k,\nu}\omega(t,y) =0,\quad \omega(0,y)=\omega_{\text{in}}(y),
\end{align}
where
\begin{align*}
  \mathcal O_{b,k,\nu}\omega=ik \mathcal R_{b,k}\omega-\nu \left(\pa_y^2-k^2\right)\omega
\end{align*}
is the Orr-Sommerfeld operator and $\nu$ is the viscous coefficient. If $\nu$ is small enough, the instability observed in Theorem \ref{thm} still holds.
\begin{corol}\label{cor}
  For background shear flow $b(y)$ satisfies the Assumption \ref{assum}. For any $M>0$, there exist $\omega_{\text{in}}$, $T$, and $\nu_0>0$ such that
  \begin{align*}
    \left\|\omega_{\text{in}}(y)\right\|_{L^2(\mathbb R)}=1,
  \end{align*}
  and the solution $\omega(t,y)$ of \eqref{eq-linNS-k} with $0\le\nu\le\nu_0$ from initial data $\omega_{\text{in}}$  satisfies
  \begin{align*}
    \left\|\omega(T,y)\right\|_{L^2(\mathbb R)}\ge M.
  \end{align*}
\end{corol}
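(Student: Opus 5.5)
The statement to prove is Corollary \ref{cor}. The plan is a perturbation argument on a finite time interval. We combine Theorem \ref{thm} (the inviscid case $\nu=0$) with a stability estimate for the difference between the viscous and inviscid evolutions on $[0,T]$.

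First, fix $M>0$ and apply Theorem \ref{thm} with $2M$ in place of $M$. This gives initial data $\omega_{\text{in}}^0$ and a time $T$ with $\|\omega_{\text{in}}^0\|_{L^\infty}+\|\omega_{\text{in}}^0\|_{L^2}=1$ and $\|\omega^0(T)\|_{L^2}\ge 2M$, where $\omega^0$ solves \eqref{eq-linEuler-k}. Normalizing, we take $\omega_{\text{in}}=\omega_{\text{in}}^0/\|\omega_{\text{in}}^0\|_{L^2}$. Since $\|\omega_{\text{in}}^0\|_{L^2}\le 1$, linearity gives $\|\omega^0(T)\|_{L^2}\ge 2M$ for the normalized data as well.

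Before comparing evolutions, we want the data to be smooth, say $\omega_{\text{in}}\in H^2$. If the construction behind Theorem \ref{thm} already yields smooth data, nothing needs to be done. Otherwise we mollify: $\omega_{\text{in}}^\epsilon=\omega_{\text{in}}*\rho_\epsilon$. The inviscid solution operator $e^{-ik\mathcal R_{b,k}t}$ is bounded on $L^2$ for each fixed $t$, since $\mathcal R_{b,k}$ is bounded ($b''(\pa_y^2-k^2)^{-1}$ is bounded and multiplication by $b$ generates a unitary group). More precisely, Duhamel and Gronwall give $\|e^{-ik\mathcal R_{b,k}t}\|_{L^2\to L^2}\le e^{C|k|t}$. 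Hence $\omega^{0,\epsilon}(T)\to\omega^0(T)$ in $L^2$, and we still get $\|\omega^{0,\epsilon}(T)\|_{L^2}\ge \tfrac32 M$ after renormalizing.

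Next we compare evolutions. Let $\omega^\nu$ solve \eqref{eq-linNS-k} with the same data, and set $w=\omega^\nu-\omega^0$. Then
\begin{align*}
\pa_t w + ik\mathcal R_{b,k} w -\nu(\pa_y^2-k^2)w = \nu(\pa_y^2-k^2)\omega^0,\qquad w(0)=0.
\end{align*}
Take the $L^2$ energy estimate. The term $ikb\,w$ is skew and contributes nothing. The viscous term is dissipative. The nonlocal term satisfies $\|k b''(\pa_y^2-k^2)^{-1}w\|_{L^2}\le C\|w\|_{L^2}$ with $C$ independent of $\nu$. This yields
\begin{align*}
\|w(t)\|_{L^2}\le \nu e^{Ct}\int_0^t\|(\pa_y^2-k^2)\omega^0(s)\|_{L^2}\,ds .
\end{align*}

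The main obstacle is bounding $\|\omega^0(s)\|_{H^2}$ on $[0,T]$, which fails for the raw Theorem \ref{thm} data if it is nonsmooth; this is why we mollified. For smooth data we differentiate \eqref{eq-linEuler-k} in $y$. The commutators involve $b'$, $b'''$, $b''''$ and derivatives of $(\pa_y^2-k^2)^{-1}$, all bounded thanks to $b''\in H^4$ and $b'\le c_m^{-1}$. Gronwall then gives $\|\omega^0(s)\|_{H^2}\le C e^{CT}(1+T)^2\|\omega_{\text{in}}\|_{H^2}$; the loss of growth in $T$ is harmless because $T$ is fixed.

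Finally, choose $\nu_0$ so small that $\nu_0 e^{CT}T\sup_{[0,T]}\|\omega^0\|_{H^2}\le M/2$. Then for all $0\le\nu\le\nu_0$,
\begin{align*}
\|\omega^\nu(T)\|_{L^2}\ge \|\omega^0(T)\|_{L^2}-\|w(T)\|_{L^2}\ge M,
\end{align*}
which proves Corollary \ref{cor}.
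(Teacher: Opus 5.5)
Your proposal is correct and follows essentially the same route as the paper. Both arguments compare the viscous and inviscid evolutions on a fixed interval $[0,T]$, bound the $L^2$ difference by $\nu$ times the time integral of $\|(\partial_y^2-k^2)\omega^0\|_{L^2}$ using a Gronwall $H^2$ estimate for the inviscid flow, and conclude from Theorem \ref{thm}; the paper phrases the comparison via Duhamel and also proves the uniform $H^2$ bound for the viscous evolution, while your explicit renormalization to $\|\omega_{\text{in}}\|_{L^2}=1$ fills a step the paper leaves implicit. One small slip: $\mathcal R_{b,k}$ is not bounded on $L^2$ because $b$ is unbounded, but your parenthetical (a bounded perturbation of multiplication by $b$, which is skew after the factor $-ik$) correctly justifies $\|e^{-ik\mathcal R_{b,k}t}\|_{L^2\to L^2}\le e^{C|k|t}$.
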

\begin{remark}
  For monotone shear flow, it was proved in \cite{LiZhao2024} that if the Rayleigh operator has no eigenvalue and embedded eigenvalue, there will be some uniform enhanced dissipation
\begin{align*}
  \left\|\omega_{\neq}(t,y)\right\|_{L^2(\mathbb R)}\lesssim C e^{\frac{1}{C}\nu^\frac{1}{3}t},
\end{align*}
where $C$ is independent on $\nu$, and $\omega_{\neq}$ means the non-zero modes of the solution. The above Corollary shows that, for shear flow with embedded eigenvalue, this kind of uniform enhanced dissipation no longer holds.
\end{remark}

Without loss of generality, in the rest of this paper, we assume that $\fc_*=0$ is the embedded eigenvalue of $\mathcal R_{b,k}$ with $k=1$.

\subsection{Main ideas}
We next outline the main ideas of the proof and clarify the source of the instability. We begin with a toy model that provides an intuitive illustration of the growth induced by the non-normality of the operator.

\subsubsection{Toy model}
In \cite{trefethen1993}, Trefethen et al. introduced the following toy model (see also \cite{Chapman2002})
\begin{equation}\label{eq-toy}
    \frac{d}{dt}\left(\begin{array}{l}
      \tilde\psi(t)\\
      \tilde\phi(t)
    \end{array}\right)=A\cdot \left(\begin{array}{l}
      \tilde\psi(t)\\
      \tilde\phi(t)
    \end{array}\right),
\end{equation}
where
\begin{align}\label{eq-A1}
  A=\left(\begin{array}{cc}
      -2\nu&0\\
      1&-\nu
    \end{array}\right),
\end{align}
$\tilde\psi$ is stream-wise vortex, $\tilde\phi$ is stream-wise streak and $\nu$ is the viscosity, assumed to be small.

For the initial data $\left(\tilde\psi(0),\tilde\phi(0) \right)=(1,0)$, the solution is
\begin{align*}
  \tilde\psi(t)=e^{-2\nu t},\quad \tilde\phi(t)=\nu^{-1}\left(e^{-\nu t}-e^{-2\nu t}\right).
\end{align*}
At $t=\nu^{-1}$, we obtain $\tilde\phi(\nu^{-1})=\nu^{-1}\left(e^{-1}-e^{-2}\right)$, which becomes large as $\nu\to0$. Thus the system exhibits a transient growth.

The eigenvalues and eigenvectors of $A$ given in \eqref{eq-A1} are
\begin{align*}
  \lambda_1=-2\nu,\quad \mathbf{v}_1=(-\nu,1)^{\top},\\
  \lambda_2=-\nu,\quad \mathbf{v}_2=(0,1)^{\top},
\end{align*}
and the initial state decomposes as
\begin{align*}
  (1,0)^{\top}=-\nu^{-1}\mathbf{v}_1+\nu^{-1}\mathbf{v}_2.
\end{align*}
Hence a perturbation of unit size has $O(\nu^{-1})$ components along each eigendirection. The solution may therefore be written as
\begin{align}\label{eq-decomp}
  \left(\tilde\psi(t),\tilde\phi(t) \right)^\top=-\nu^{-1}e^{\lambda_1t}\mathbf{v}_1+\nu^{-1}e^{\lambda_2t}\mathbf{v}_2=-\nu^{-1}e^{-2\nu t}(-\nu,1)^{\top}+\nu^{-1}e^{-\nu t}(0,1)^{\top}.
\end{align}
Because $A$ is non-normal, $\mathbf{v}_1$ and $\mathbf{v}_2$ are not orthogonal; the initial near-cancellation of these components is destroyed by their different decay rates, leading to a transient growth.

If the entry $A_{11}$ in \eqref{eq-A1} is changed from $-2\nu$ to $-\nu$, the two eigenvalues coalesce. In this case, consider
\begin{align}\label{eq-A2}
  A=\left(\begin{array}{cc}
      -\nu&0\\
      1&-\nu
    \end{array}\right).
\end{align}
Since $A$ consists of a single Jordan block, it possesses only one eigenvalue and only one eigenvector,
\begin{align*}
  \lambda_1=-\nu,\quad \mathbf{v}_1=(0,1)^{\top}. \end{align*}
Because $A$ is defective, this eigenvalue $\lambda_1$ is multiple and there exists a generalized eigenvector
\begin{align*}
  \tilde {\mathbf{v}}_2=(1,0)^{\top},
\end{align*}
which satisfies
\begin{align}\label{eq-g-ev}
  \left(A-\lambda_1 \mathrm{Id}\right)\cdot \tilde {\mathbf{v}}_2=\mathbf{v}_1.
\end{align}
For this matrix $A$, the solution of \eqref{eq-toy} with initial data $\left(\tilde\psi(0),\tilde\phi(0) \right)=(1,0)$ is
\begin{align}\label{eq-decomp-mul}
  \left(\tilde\psi(t),\tilde\phi(t) \right)^\top=e^{\lambda_1t}\tilde {\mathbf{v}}_2+te^{\lambda_1t}\mathbf{v}_1=e^{-\nu t}(1,0)^{\top}+te^{-\nu t}(0,1)^{\top}.
\end{align}
Thus, the presence of the generalized eigenvector $\tilde {\mathbf{v}}_2$ continuously generates the eigenvector $\mathbf{v}_1$, producing a linear-in-time growth in the solution.

Trefethen et al. introduced the above toy model to illustrate the lift-up effect in three-dimensional viscous flows. In the present work, we study the instability of two-dimensional inviscid flows; although the physical settings differ, the underlying mechanism is similar: the growth is induced by the non-normality of the operator.

\subsubsection{Case with simple eigenvalue}
We now turn to the strategy of the proof. We first consider the case in which the embedded eigenvalue $\fc_*$ is simple.

The solution of \eqref{eq-linEuler-k} can be decomposed into two parts
\begin{align*}
  \omega(t,y)=\omega_{1}(t,y)+\omega_{2}(t,y).
\end{align*}

The first component
\begin{align*}
  \omega_{1}(t,y)=e^{-i\mathcal R_{b,k}t}\left(P \left(\omega_{\text{in}}\right)\omega_*\right)=P \left(\omega_{\text{in}}\right)\omega_*(y) e^{-i\fc_*k t}
\end{align*}
lies in the eigenspace. Here $\omega_*(y)$ denotes the eigenfunction associated with $\fc_*$, and $P \left(\omega_{\text{in}}\right)$ is the projection coefficient. This component is purely oscillatory in time.

The remaining component,
\begin{align*}
  \omega_{2}(t,y)=e^{-i\mathcal R_{b,k}t}\left(\omega_{\text{in}}-P \left(\omega_{\text{in}}\right)\omega_*\right),
\end{align*}
corresponds to the continuous spectrum. Owing to the mixing effect of the shear flow, the energy of $\omega_{2}$ is transferred from low to high frequencies in the $y$-direction.

Consequently, as in \eqref{eq-decomp}, the two components of the solution may exhibit near-cancellation at the initial time. However, since they evolve in different ways, this cancellation is gradually destroyed, which results in growth of the vorticity.

To establish the growth stated in Theorem \ref{thm}, we therefore need to verify two facts. First, for any $M>0$, one can find unit-size initial data $\omega_{\text{in}}$ such that $P \left(\omega_{\text{in}}\right)\ge M$. Second, the evolution of the continuous spectral component $\omega_{2}$ indeed leads to the loss of cancellation.

Without loss of generality, in the proof, we assume that $b(0)=0$, and for mode $k=1$, the embedded eigenvalue of $\mathcal R_{b,1}$ is $\fc_{*}=0$, then by the Rayleigh criteria, it holds that $b''(0)=0$. Then $\omega_{1}(t,y)$ is time-independent.

We first derive a suitable representation formula of the stream function. A key technical step here is to evaluate the limit involving the Wronskian
\begin{align*}
  \lim_{\pm\fc_i> 0,\left|\fc\right|\to 0}\frac{W(\fc)}{\fc}.
\end{align*}
The difficulty is that the Wronskian $W(\fc)$, defined in \eqref{eq-Wron}, is not analytic in $\fc$ (see \cite{RenZhang2025}). To overcome this, we introduce a modified Wronskian $\mathcal{W}(\fc)$ \eqref{eq-Wron-modi},  first used in \cite{LiMasmoudiZhao2022critical}, which is analytic in $\fc$. Using the analyticity, we derive the limit above and consequently obtain the projection coefficient $P(\omega_{\text{in}})$.

Formally, the projection coefficient can be viewed as
\begin{align*}
  P(\omega_{\text{in}})\sim \text{P.V.}\int_{\mathbb{R}}\frac{\omega_{\text{in}}(z)}{z}dz,
\end{align*}
which is a singular integration. We then take
\begin{align*}
 \omega_{\text{in}}(y)=\frac{1}{2} \chi_Z(y)\ge0,\quad \text{supp}\chi_Z(y)=[\frac{1}{Z},1],\quad \chi_Z(y)=1 \text{ on }[\frac{2}{Z},\frac{1}{2}].
\end{align*}
For $Z$ sufficiently large, one obtains
\begin{align*}
  |P(\omega_{\text{in}})|\gg\left\| \omega_{\text{in}}\right\|_{L^2},
\end{align*}
which shows that the projection coefficient can be made arbitrarily large while the initial data remains of unit size.

To quantify the mixing effect on $\omega_{2}(t,y)$, we use the duality argument developed in \cite{WeiZhangZhao2018} to get inviscid damping estimates for the stream function. We decompose the stream function into two parts
\begin{align*}
  \Psi(t,y)=\Psi_1(y)+\Psi_2(y),
\end{align*}
where $\Psi_1(y)=\Delta_k^{-1}\omega_{1}(y)$ is the eigenfunction component and $\Psi_2(t,y)=\Delta_k^{-1}\omega_{2}(t,y)$ corresponds to the continuous spectrum. By showing that
\begin{align*}
  \left\|\Psi_2(t,y)\right\|_{L^2}\lesssim \frac{1}{t},
\end{align*}
we obtain that, for sufficiently large $t$,
\begin{align*}
  \left\|\Psi(t,y)\right\|_{L^2}\approx \left\|\Psi_1(y)\right\|_{L^2}\approx P(\omega_{\text{in}}).
\end{align*}
Hence the growth of $\left\|\omega(t,y)\right\|_{L^2}$ follows directly.

\subsubsection{Case with multiple eigenvalue}
When the embedded eigenvalue $\fc_*$ has algebraic multiplicity greater than $1$, we establish the existence of the associated function $\eta$ that satisfies
\begin{align}\label{eq-assosi}
  \left(\mathcal R_{b,1}-\fc\right)\eta(y)=\omega_*(y).
\end{align}
The terminology associated function follows the usage in \cite{stepin1996rayleigh}.

Comparing \eqref{eq-assosi} with \eqref{eq-g-ev}, one can see that the associated function is the analogue of the generalized eigenvector corresponding to a defective matrix.

Therefore, in view of \eqref{eq-decomp-mul}, we find that, if the associated function $\eta$ is taken as the initial data, then
\begin{align*}
  \omega(t,y)=te^{-i\fc_* t}\omega_*(y)+ie^{-i\fc_* t}\eta(y)
\end{align*}
solves \eqref{eq-linEuler-k} and exhibits linear-in-time growth. Intuitively, the associated function $\eta$ keeps generating the eigenfunction $\omega_*$ in time, which results in the linear growth.

Next, we give the construction of the associated function under the assumption that $\fc*$ is a multiple embedded eigenvalue.

We introduce two family of solutions, denoted by $\varphi^+(y,\fc)$ and $\varphi^-(y,\fc)$ in \eqref{eq-phi-pm} (see also \cite{LiMasmoudiZhao2022critical}), which decay exponentially as $y\to +\infty$ and $y\to -\infty$, respectively. Since $\fc_*$ is an embedded eigenvalue, we have
\begin{align*}
  \varphi^+(y_{\fc},\fc_*)=\varphi^-(y_{\fc},\fc_*),\quad\pa_y\varphi^+(y,\fc_*)|_{y=y_{\fc}}=\pa_y\varphi^-(y,\fc_*)|_{y=y_{\fc}},
\end{align*}
where $y_{\fc}=b^{-1}\left(\fc_*\right)$. Then,
\begin{align*}
  \varphi_*(y)=
  \left\{
\begin{aligned}
&\varphi^+(y,\fc_*),\quad y\ge y_{\fc},\\
&\varphi^-(y,\fc_*),\quad y\le y_{\fc},
\end{aligned}
\right.
\end{align*}
is a solution of the homogeneous Rayleigh equation \eqref{eq-Rayleigh1} in $H^2(\mathbb R)$.

Moreover, we prove that if $\fc_*$ is a multiple eigenvalue, then
\begin{align*}
  \pa_\fc\varphi^+(y_{\fc},\fc)|_{\fc=\fc_*}=\pa_\fc\varphi^-(y_{\fc},\fc)|_{\fc=\fc_*},\quad\pa_y\pa_\fc\varphi^+(y,\fc)|_{\fc=\fc_*,y=y_{\fc}}=\pa_y\pa_\fc\varphi^-(y,\fc)|_{\fc=\fc_*,y=y_{\fc}},
\end{align*}
and consequently, for
\begin{align*}
  \pa_\fc\varphi_*(y)=
  \left\{
\begin{aligned}
&\pa_\fc\varphi^+(y,\fc_*),\quad y\ge y_{\fc},\\
&\pa_\fc\varphi^-(y,\fc_*),\quad y\le y_{\fc},
\end{aligned}
\right.
\end{align*}
we have that $\Delta_k\pa_\fc\varphi_*$ is a solution of \eqref{eq-assosi}.

Through the proof, we also show that the condition $b'''(y_{\fc})=0$ is necessary for an embedded eigenvalue to become a multiple eigenvalue. This is consistent with the analysis in \cite{stepin1996rayleigh} for the channel case.

Moreover, when the algebraic multiplicity of $\fc_*$ is $2$, the integrand in \eqref{eq-rep-Psi} exhibits a second-order singularity at $\fc_*$. This provides an alternative interpretation, via the Cauchy's differentiation formula, of the linear-in-time growth.

\subsection{Notations}
Through this paper, we will use the following notations.

We use $C$ to denote a positive big enough constant that may be different from line to line, and use $f\lesssim g$ ($f\approx g$) to denote
\begin{align*}
  f\le C g\quad (\frac{1}{C}g\le f\le C g).
\end{align*}
We use $f\ll g$ to indicate $f\le \frac{1}{C}g$ for a sufficient big $C$.

 Given a function $f(t,y)$, we denote its spatial derivation with respect to $y$ as
\begin{align*}
  f'(t,y)=\pa_yf(t,y),\qquad f''(t,y)=\pa_y^2f(t,y),\qquad f^{(n)}(t,y)=\pa_y^n f(t,y).
\end{align*}

\subsection{Organization}
The remainder of the paper is organized as follows. In Section 2, we give several useful properties of solutions to the homogeneous Rayleigh equation. In Section 3, we derive a representation formula for the stream function. In Section 4, we compute the projection coefficient and obtain inviscid-damping-type estimates for the solution corresponding to the continuous spectral component. In Section 5, we present the proofs of the main theorems.
\section{Homogeneous Rayleigh equation}

In this section, we study the homogeneous Rayleigh equation. Without loss of generality, we restrict our attention to the $k=1$ mode, which takes the form
\begin{align}\label{eq-Rayleigh1}
  \psi''(y)-\psi(y)-\frac{b''(y)}{b(y)-\fc}\psi(y)=0.
\end{align}
Here $b(y)$ is the shear flow that satisfies Assumption \ref{assum} and $\fc=\fc_r+i\fc_i\in\mathbb C$.

Throughout the rest of this paper, we use the shorthand notation
\begin{align*}
  \mathcal {R}=\mathcal {R}_{b,1}=b(y)-b''(y)\left(\partial_y^2-1\right)^{-1}.
\end{align*}

\subsection{Solving the homogeneous Rayleigh equation}\label{sec:Homogeneous Rayleigh equation}
In this subsection, we construct solutions to \eqref{eq-Rayleigh1}, and give the criterion for whether $\fc$ is an eigenvalue.  For the proof of the following results,  we refer the reader to \cite{LiMasmoudiZhao2022critical} and \cite{LiZhao2024}.
\begin{proposition}\label{prop-phi}
  There exist $0<\varepsilon \le 1$ and $0<C_4\le1$ which depend only on the upper and lower bounds of $b'(y)$, such that for $\fc\in\mathbb C$  with $0\leq |\fc_i|\le \varepsilon$, \eqref{eq-Rayleigh1} has a regular solution
  \begin{align*}
    \phi(y,\fc)=\big(b(y)-\fc\big)\phi_1(y,\fc_r)\phi_2(y,\fc),
  \end{align*}
  which satisfies $\phi(y_\fc,\fc)={-i\fc_i}$ and $\phi'(y_\fc,\fc)=b'(y_\fc)$. Here $y_\fc=b^{-1}(\fc_r)$, $\phi_1(y,\fc_r)$ is a real function that solves
  \begin{equation}\label{eq-phi1}
    \left\{
      \begin{array}{ll}
        \pa_y\left((b(y)-\fc_r)^2\phi_1'(y,\fc_r)\right)-\phi_1(y,\fc_r)(b(y)-\fc_r)^2=0,\\
        \phi_1(y_\fc,\fc_r)=1,\quad\phi_1'(y_\fc,\fc_r)=0,
      \end{array}
    \right.
  \end{equation}
and $\phi_2(y,\fc)$ solves
\begin{equation}\label{eq-phi2}
  \left\{
    \begin{array}{ll}
      \pa_y\Big((b(y)-\fc)^2\phi_1(y,\fc_r)^2\phi_2'(y,\fc)\Big)+\frac{2i{\fc_i} b'(y)(b(y)-\fc)}{b(y)-\fc_r}\phi_1(y,\fc_r)\phi_1'(y,\fc_r)\phi_2(y,\fc)=0,\\
      \phi_2(y_\fc,\fc)=1,\quad\phi_2'(y_\fc,\fc)=0.
    \end{array}
  \right.
\end{equation}
It holds that
  \begin{equation}\label{eq-phi1-est}
    \begin{aligned}
      \phi_1(y,\fc_r)\ge1,\quad Ce^{C_4(|y-y_\fc|)}\le\phi_1(y,\fc_r)\le e^{|y-y_\fc|},\quad\forall y\in\mathbb R,\\
      \frac{\phi_1'(y,\fc_r)}{y-y_\fc}\ge 0,\  |\phi_1'(y,\fc_r)|\le\phi_1(y,\fc_r),\quad \left|\frac{\phi_1'(y,\fc_r)}{\phi_1(y,\fc_r)}\right|\le \left|y-y_{\fc}\right|,\quad\forall y\in\mathbb R,\\
     |\phi_1(y,\fc_r)-1|\le C|y-y_\fc|^2\text{ for } |y-y_\fc|\le1,\  |\phi_1'(y,\fc_r)|\ge C_4\phi_1(y,\fc_r)\text{ for } |y-y_\fc|\ge1,\\
     \frac{\phi_1(y,\fc_r)}{\phi_1(y',\fc_r)}\le Ce^{-C_4(y-y')}\text{ for }y'\le y\le y_\fc,\quad \frac{\phi_1(y,\fc_r)}{\phi_1(y',\fc_r)}\le Ce^{-C_4(y'-y)}\text{ for }y'\ge y\ge y_\fc,
    \end{aligned}
  \end{equation}
    \begin{equation}\label{eq-phi2-est}
    \begin{aligned}
      \left|\phi_2(y,\fc)-1\right|\le C\min(|\fc_i|,|\fc_i||y-y_\fc|,|y-y_\fc|^2),\ \forall y\in\mathbb R,\\
       \left|\phi_2'(y,\fc)\right|\le C\min(|\fc_i|, |y-y_\fc|),\quad \|\phi_2''(y,\fc)\|_{L^\infty_y}\le C,\ \forall y\in\mathbb R,
    \end{aligned}
  \end{equation}
  and
    \begin{align}\label{eq-phi-est}
    C^{-1}(|y-y_\fc|+|\fc_i|)e^{C_4|y-y_\fc|}\le|\phi(y,\fc)|\le C(|y-y_\fc|+|\fc_i|)e^{|y-y_\fc|}\text{ for }y\in\mathbb R,
  \end{align}
  where $C>0$ is a constant independent of $\fc$.
\end{proposition}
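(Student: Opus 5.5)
The plan is to build $\phi$ in two layers, each a perturbation of something explicit, and to get every bound in \eqref{eq-phi1-est}--\eqref{eq-phi-est} from integral equations.

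\textbf{Reduction to $\phi_1,\phi_2$.} First I check the factorization. Set $\phi=(b-\fc)\Phi$. Then \eqref{eq-Rayleigh1} becomes
\begin{align*}
\pa_y\big((b-\fc)^2\Phi'\big)-(b-\fc)^2\Phi=0,
\end{align*}
because the $b''$ terms cancel. I write $\Phi=\phi_1\phi_2$, where $\phi_1$ is the real solution of the same equation with $\fc$ replaced by $\fc_r$, namely \eqref{eq-phi1}. Expanding and using \eqref{eq-phi1} shows that $\phi_2$ satisfies \eqref{eq-phi2}. The coupling term comes from
\begin{align*}
(b-\fc)^2-(b-\fc_r)^2=-2i\fc_i(b-\fc_r)+O(\fc_i^2).
\end{align*}
I will verify this algebra directly; the exact coefficient is the one displayed in \eqref{eq-phi2}. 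The normalization $\phi_1(y_\fc)=\phi_2(y_\fc)=1$, $\phi_1'(y_\fc)=\phi_2'(y_\fc)=0$ then gives $\phi(y_\fc,\fc)=b(y_\fc)-\fc=-i\fc_i$ and $\phi'(y_\fc,\fc)=b'(y_\fc)$.

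\textbf{Estimates for $\phi_1$.} I solve \eqref{eq-phi1} via the Volterra equation
\begin{align*}
\phi_1(y)=1+\int_{y_\fc}^y\frac{1}{(b(z)-\fc_r)^2}\int_{y_\fc}^z\phi_1(w)\,(b(w)-\fc_r)^2\,dw\,dz.
\end{align*}
The kernel is bounded by $|z-y_\fc|$, since $b'\approx 1$ by Assumption \ref{assum}, so the iteration converges.
\begin{itemize}
\item Positivity of the integrand gives $\phi_1\ge1$, and $\phi_1'$ has the sign of $y-y_\fc$.
\item The bound $|\phi_1-1|\le C|y-y_\fc|^2$ for $|y-y_\fc|\le1$ follows from the first iterate.
\item Monotonicity of $\phi_1$ in the inner integral gives $|\phi_1'|\le |y-y_\fc|\,\phi_1$.
\item To get $|\phi_1'|\le\phi_1$ and the upper bound $\phi_1\le e^{|y-y_\fc|}$, I use a comparison (Riccati) argument for $u=\phi_1'/\phi_1$. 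It satisfies $u'=1-u^2-\frac{2b'}{b-\fc_r}u$ and $u(y_\fc)=0$, and the damping term keeps $|u|\le1$.
\item For the lower growth I also need $u\ge C_4$ for $|y-y_\fc|\ge1$. This comes from the same Riccati equation: once $|y-y_\fc|\ge1$ the drag coefficient $\frac{2b'}{b-\fc_r}$ is bounded, while $u$ is positive and increasing near $y_\fc$.
\end{itemize}
Integrating $u$ yields the exponential lower bound and the ratio bounds $\phi_1(y)/\phi_1(y')\le Ce^{-C_4|y-y'|}$ on each side of $y_\fc$.

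\textbf{Estimates for $\phi_2$.} I write \eqref{eq-phi2} as
\begin{align*}
\phi_2(y)=1-\int_{y_\fc}^y\frac{1}{(b-\fc)^2\phi_1^2}\int_{y_\fc}^z\frac{2i\fc_i b'(b-\fc)}{b-\fc_r}\,\phi_1\phi_1'\phi_2\,dw\,dz,
\end{align*}
and bound the kernel. Since $\phi_1'(w)\approx (w-y_\fc)$ near $y_\fc$, the inner factor is $O(|\fc_i|\,|w-y_\fc|)$. Against $|b-\fc|^{-2}\approx(|z-y_\fc|+|\fc_i|)^{-2}$ this gives an integrable contribution that produces $\min(|\fc_i|,\,|\fc_i||y-y_\fc|,\,|y-y_\fc|^2)$, up to a logarithm I must rule out using the $|w-y_\fc|$ weight. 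Far from $y_\fc$ the factor $\phi_1\phi_1'/\phi_1^2(z)$ decays by the ratio bounds, so the inner integral $\int^z\phi_1\phi_1'\,dw\lesssim\phi_1(z)^2$ is absorbed. A Gronwall/contraction argument, with $\varepsilon$ small, then closes $\|\phi_2\|_{L^\infty}\le C$ and the stated bounds on $\phi_2'$. The bound on $\phi_2''$ I read off from the equation.

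\textbf{Assembly and main obstacle.} Finally \eqref{eq-phi-est} follows from $|b-\fc|\approx|y-y_\fc|+|\fc_i|$, $|\phi_2|\approx1$ (using $|\phi_2-1|\le C|\fc_i|\le C\varepsilon$ small) and the bounds on $\phi_1$. I expect the main difficulty to be the uniform-in-$\fc_i$ control of the $\phi_2$ Volterra kernel near the critical layer. There the singular weight $(b-\fc)^{-2}$ must be balanced against the vanishing of $\phi_1'$ and the factor $\fc_i$, and this has to be done without losing a logarithm, which would spoil the $|\phi_2'|\le C|\fc_i|$ bound.
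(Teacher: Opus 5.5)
Your proposal is correct. It follows the standard construction that the paper does not reprove but cites from \cite{LiMasmoudiZhao2022critical} and \cite{LiZhao2024}: factor $\phi=(b-\fc)\phi_1\phi_2$, build $\phi_1$ by Volterra iteration with Riccati/comparison bounds on $\phi_1'/\phi_1$, and treat $\phi_2$ as an $O(|\fc_i|)$ perturbation closed by taking $\varepsilon$ small. Two points in your sketch need tightening.

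\textbf{Starting value for the Riccati argument.} ``Positive and increasing near $y_\fc$'' does not give a lower bound uniform in $\fc_r$. You need $\phi_1'/\phi_1\ge c(c_m)>0$ at $|y-y_\fc|=1$ so that $C_4$ depends only on $c_m$. This comes straight from the Volterra formula for $\phi_1'$, using $\phi_1\ge 1$ and $\phi_1\le e$ on $|y-y_\fc|\le 1$.

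\textbf{The logarithm you flag does not arise, but your stated bound is wrong.} For $|w-y_\fc|\le1$,
$\frac{(b-\fc)\phi_1'}{b-\fc_r}=\phi_1'-i\fc_i\frac{\phi_1'}{b-\fc_r}=O(|w-y_\fc|+|\fc_i|)$, not $O(|w-y_\fc|)$. Indeed it tends to $-\frac{i\fc_i}{3b'(y_\fc)}\neq 0$ at $y_\fc$. With the correct bound, the inner integral is $O\big(|\fc_i|(|z-y_\fc|^2+|\fc_i||z-y_\fc|)\big)$. Dividing by $|b-\fc|^2\approx(|z-y_\fc|+|\fc_i|)^2$ gives $|\phi_2'(z)|\lesssim|\fc_i|\frac{|z-y_\fc|}{|z-y_\fc|+|\fc_i|}\le\min(|\fc_i|,|z-y_\fc|)$ directly. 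The vanishing of $\phi_1'$ at $y_\fc$ is exactly what cancels the $1/(b-\fc_r)$ singularity; without it you would get the $|\fc_i|^2\log$ term you were worried about.
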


It is easy to check that
\beno
\phi(y,\fc)\quad &\text{and}&\quad \varphi^{+}(y,\fc),\\
\phi(y,\fc)\quad &\text{and}&\quad \varphi^{-}(y,\fc),
\eeno
where
\begin{align}\label{eq-phi-pm}
  \varphi^\pm(y,\fc)=&\phi(y,\fc)\int^y_{\pm\infty}\frac{1}{\phi(y',\fc)^2}dy',
\end{align}
are two fundamental sets of solutions of \eqref{eq-Rayleigh1}.
Therefore, if $\psi(y,\fc)\in H^2_{loc}$ is a solution of \eqref{eq-Rayleigh1}, then $\psi(y,\fc)$ has the following form
\begin{align}\label{eq-psi-eigenfunction}
  \psi(y,\fc)=a_1^-\phi(y,\fc)+a_2^-\varphi^-(y,\fc)=a_1^+\phi(y,\fc)+a_2^+\varphi^+(y,\fc),
\end{align}
where $a_1^\pm$, $a_2^\pm$ are constants. Note that by Proposition \ref{prop-phi},  $\varphi^\pm$ are well defined for $0<|\fc_i|\le \varepsilon$.

We give a criterion for whether $\fc$ is an eigenvalue of $\mathcal {R}$.
\begin{lemma}\label{lem-eigen}
  Let
  \begin{align}\label{eq-Wron}
  W(\fc)=&\int^{+\infty}_{-\infty}\frac{1}{\phi(y',\fc)^2}dy'.
  \end{align}
Then, a number $\fc\in\mathbb C$  with $0<|\fc_i|\le \varepsilon$ is an eigenvalue of $\mathcal {R}$ if and only if $W(\fc)=0$.
\end{lemma}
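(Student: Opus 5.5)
The plan is to identify $W(\fc)$ with the Wronskian of the two fundamental solutions $\varphi^+$ and $\varphi^-$, which decay at $+\infty$ and $-\infty$ respectively. An eigenvalue then corresponds exactly to these two solutions being linearly dependent. Fix $\fc$ with $0<|\fc_i|\le\varepsilon$. By \eqref{eq-phi-est}, $\phi(y,\fc)$ never vanishes on $\mathbb R$, since $|\phi|\ge C^{-1}|\fc_i|>0$. It also satisfies $|\phi|\gtrsim e^{C_4|y-y_\fc|}$, so $1/\phi^2$ is integrable, and both $W(\fc)$ and $\varphi^\pm$ are well defined.

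\emph{Step 1: behaviour of $\varphi^\pm$ at infinity.} For $y\to+\infty$ we have $|\int_y^{+\infty}\phi^{-2}dy'|\lesssim \int_y^\infty e^{-2C_4|y'-y_\fc|}dy'$. Combining this with the upper bound $|\phi(y)|\lesssim (|y-y_\fc|+1)e^{|y-y_\fc|}$ is not sharp enough on its own. Instead, I would use the ratio bounds on $\phi_1$ in \eqref{eq-phi1-est}, namely $\phi_1(y)/\phi_1(y')\le Ce^{-C_4(y'-y)}$ for $y'\ge y\ge y_\fc$, together with $\phi_2\approx1$ from \eqref{eq-phi2-est} and $b-\fc\approx y$. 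These give
\begin{align*}
|\varphi^+(y,\fc)|\lesssim \int_y^\infty \frac{|\phi(y)|}{|\phi(y')|^2}dy' \lesssim \frac{1}{|\phi(y)|}\lesssim e^{-C_4|y-y_\fc|},
\end{align*}
and similarly for the derivative, using $(\varphi^+)'=\phi'\int\phi^{-2}+1/\phi$. Hence $\varphi^+$ decays exponentially at $+\infty$, and symmetrically $\varphi^-$ decays exponentially at $-\infty$. In contrast, $\phi$ grows exponentially at both ends, because $|\phi|\gtrsim e^{C_4|y-y_\fc|}$.

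\emph{Step 2: the relation with $W$.} Directly from \eqref{eq-phi-pm},
\begin{align*}
\varphi^+(y,\fc)-\varphi^-(y,\fc)=\phi(y,\fc)\Big(\int_{+\infty}^y-\int_{-\infty}^y\Big)\phi^{-2}dy'=-W(\fc)\phi(y,\fc).
\end{align*}
If $W(\fc)=0$, then $\varphi^+=\varphi^-$ decays at both ends. This function solves \eqref{eq-Rayleigh1} with nonvanishing denominator $b-\fc$, so it is smooth, and it lies in $H^2(\mathbb R)$. Setting $\omega=(\pa_y^2-1)\varphi^+=\frac{b''}{b-\fc}\varphi^+$, we get $\mathcal R\omega=\fc\omega$. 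Moreover $\omega\ne0$, because $\varphi^+\not\equiv0$: its Wronskian with $\phi$ equals $1$. Thus $\fc$ is an eigenvalue.

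\emph{Step 3: the converse.} Suppose $\fc$ is an eigenvalue with eigenfunction $\omega$, and let $\psi=(\pa_y^2-1)^{-1}\omega\in H^2$, so that $\psi$ solves \eqref{eq-Rayleigh1} and tends to $0$ at $\pm\infty$. By \eqref{eq-psi-eigenfunction}, $\psi=a_1^+\phi+a_2^+\varphi^+$. Since $\phi$ grows exponentially at $+\infty$ while $\varphi^+$ decays, we must have $a_1^+=0$; likewise $\psi=a_2^-\varphi^-$. Substituting the identity from Step 2 gives $a_2^+\varphi^+=a_2^-\varphi^+ +a_2^-W\phi$, that is, $(a_2^+-a_2^-)\varphi^+=a_2^-W\phi$. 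The left side decays at $+\infty$ while $\phi$ grows, so $a_2^-W=0$ and then $a_2^+=a_2^-$. Since $\psi\ne0$, we get $a_2^-\ne0$, and therefore $W(\fc)=0$.

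The main obstacle is Step 1, namely obtaining a sharp decay bound on $\varphi^\pm$ rather than the crude bound coming from mismatched exponents $e^{|y|}$ versus $e^{C_4|y|}$. The ratio estimates for $\phi_1$ in \eqref{eq-phi1-est} are exactly what resolves this.
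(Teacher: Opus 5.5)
Your proof is correct. It is essentially the standard argument the paper relies on: the paper gives no proof of its own and defers to \cite{LiMasmoudiZhao2022critical,LiZhao2024}. Your identity $\varphi^-=\varphi^++W(\fc)\phi$ is equivalent to the paper's later remark that $W(\fc)=\det\left(\begin{matrix}\varphi^-&\varphi^+\\ \pa_y\varphi^-&\pa_y\varphi^+\end{matrix}\right)$, so $\fc$ is an eigenvalue exactly when the two decaying solutions are proportional. Your use of the ratio bounds in \eqref{eq-phi1-est} to obtain genuine exponential decay of $\varphi^\pm$, instead of the mismatched $e^{|y|}$ versus $e^{2C_4|y|}$ bound, correctly fills in the one step that is not purely algebraic.
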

The criterion function $W(\fc)$ can be extended to $\fc\in\mathbb R$.
\begin{lemma}\label{lem-lim-eigen}
  It holds that
  \begin{align}\label{eq-Wronskian-realline}
    \lim_{\fc_i\to0\pm}W(\fc)=\mathcal J_1(\fc_r)\mp i\mathcal J_2(\fc_r),
  \end{align}
  where
  \begin{align*}
    \mathcal J_1(\fc_r)&=\frac{1}{b'(y_\fc)}\Pi_1(\fc_r)+\Pi_2(\fc_r),\quad\mathcal J_2(\fc_r)=\pi\frac{b''(y_\fc)}{{b'}(y_\fc)^3}.
  \end{align*}
  and
  \begin{align*}
  \Pi_1(\fc_r)= \text{P.V.}\int  \frac{b'(y_\fc)-b'(y)}{(b(y)-\fc_r)^2} dy,\quad \Pi_2(\fc)= \int_{-\infty}^{+\infty} \frac{1}{(b(y)-\fc_r)^2}\left(\frac{1}{\phi_1(y,\fc_r)^2}-1\right) dy.
\end{align*}
\end{lemma}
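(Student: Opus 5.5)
We prove Lemma \ref{lem-lim-eigen} by splitting $W(\fc)$, with $\phi=(b-\fc)\phi_1\phi_2$ from Proposition \ref{prop-phi}, into an explicit Couette-like singular part and a remainder that is regular as $\fc_i\to0$. Write
\begin{align*}
\frac{1}{\phi(y,\fc)^2}=\frac{1}{(b-\fc)^2}+\frac{1}{(b-\fc)^2}\Big(\frac{1}{\phi_1^2}-1\Big)+\frac{1}{(b-\fc)^2\phi_1^2}\Big(\frac{1}{\phi_2^2}-1\Big).
\end{align*}
For the third term, \eqref{eq-phi2-est} gives $|\phi_2^{-2}-1|\lesssim\min(|\fc_i|,|y-y_\fc|^2)$ near $y_\fc$, while $|b-\fc|^2\gtrsim |y-y_\fc|^2+\fc_i^2$. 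On $|y-y_\fc|\le 1$ the integrand is therefore bounded by $\min(|\fc_i|,|y-y_\fc|^2)/(|y-y_\fc|^2+\fc_i^2)$, whose integral is $O(|\fc_i|^{1/2})$ after splitting at $|y-y_\fc|=|\fc_i|^{1/2}$. Away from $y_\fc$ the factor $\phi_1^{-2}$ decays exponentially and the integrand is $O(|\fc_i|)$. Hence this term tends to $0$. The second term has integrand dominated, uniformly in $\fc_i$, by $C\min(1,|y-y_\fc|^2)/(b-\fc_r)^2\cdot(1+\phi_1^{-2})$, using $|\phi_1-1|\lesssim|y-y_\fc|^2$ near $y_\fc$ and $|b-\fc|\ge|b-\fc_r|$. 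Dominated convergence then gives its limit $\Pi_2(\fc_r)$. One subtlety: $y_\fc$ depends only on $\fc_r$, so $\phi_1$ is fixed as $\fc_i\to0$, which is what makes this step straightforward.

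The main step is the first term, $\int(b-\fc)^{-2}dy$. We substitute $u=b(y)$ and write $dy=du/b'(b^{-1}(u))$. Then
\begin{align*}
\int\frac{dy}{(b-\fc)^2}=\int\frac{1}{b'(y)}\Big(\frac{1}{(b-\fc)^2}\Big)dy,
\end{align*}
and we add and subtract $\frac{b'(y)}{b'(y_\fc)}$:
\begin{align*}
\int\frac{dy}{(b-\fc)^2}=\frac{1}{b'(y_\fc)}\int\frac{b'(y)\,dy}{(b-\fc)^2}+\frac{1}{b'(y_\fc)}\int\frac{b'(y_\fc)-b'(y)}{(b-\fc)^2}dy.
\end{align*}
Since $b$ is monotone and $b'\ge c_m$, $b$ is a bijection onto $\mathbb R$. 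The first integral is an exact derivative, $-\big[(b-\fc)^{-1}\big]_{-\infty}^{\infty}=0$. For the second, set $g(y)=b'(y_\fc)-b'(y)$, so that $g(y_\fc)=0$ and $g'(y_\fc)=-b''(y_\fc)$. We use the Plemelj-type identity
\begin{align*}
\lim_{\fc_i\to0\pm}\int\frac{g(y)}{(b(y)-\fc)^2}dy=\text{P.V.}\int\frac{g}{(b-\fc_r)^2}dy\pm i\pi\,\frac{d}{du}\Big(\frac{g}{b'}\Big)\Big|_{u=\fc_r}.
\end{align*}
This follows from $(u-\fc)^{-2}=\partial_\fc(u-\fc)^{-1}$ together with the Sokhotski–Plemelj formula $\frac{1}{u-\fc_r\mp i0}=\text{P.V.}\frac{1}{u-\fc_r}\pm i\pi\delta$, applied in the variable $u$ to the $H^1$-regular density $g/b'$. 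Since $g$ vanishes at $y_\fc$, $\text{P.V.}$ makes sense as a first-order principal value. Computing the derivative, $\frac{d}{du}(g/b')|_{\fc_r}=g'(y_\fc)/b'(y_\fc)^2=-b''(y_\fc)/b'(y_\fc)^2$. Dividing by $b'(y_\fc)$ gives the imaginary part $\mp i\pi b''(y_\fc)/b'(y_\fc)^3=\mp i\mathcal J_2$, and the real part $\Pi_1/b'(y_\fc)$. Assembling the three terms yields \eqref{eq-Wronskian-realline}.

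The hard part is justifying the Plemelj limit with the double pole, i.e. checking the sign convention and keeping the principal value well defined. To do this rigorously we integrate by parts once in $u$, writing $\int \frac{h(u)}{(u-\fc)^2}du=\int\frac{h'(u)}{u-\fc}du$ with $h=g/b'\circ b^{-1}$. The boundary terms vanish because $h$ is bounded and $(u-\fc)^{-1}\to0$. We then apply the standard Plemelj formula to $h'$, which is Hölder continuous because $b''\in H^4\subset C^{1}$. The real part is $\text{P.V.}\int h'/(u-\fc_r)$, which equals $\text{P.V.}\int h/(u-\fc_r)^2$ in the Hadamard sense, and this coincides with $\Pi_1$ since $h(\fc_r)=0$.
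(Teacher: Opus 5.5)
Your proof is correct. The paper does not prove this lemma itself; it defers to \cite{LiMasmoudiZhao2022critical} and \cite{LiZhao2024}. Your route is peeling off $(b-\fc)^{-2}$, subtracting $b'(y)/b'(y_\fc)$, using dominated convergence for the $\phi_1$-correction, bounding the $\phi_2$-correction by $O(|\fc_i|^{1/2})$, and applying Plemelj after one integration by parts in $u=b(y)$. This is the computation behind the paper's own decomposition \eqref{decom-varphi} and its Hilbert-transform identity \eqref{Hilbert}.

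Two small points are cosmetic. First, the right-hand side of the display right after ``substitute $u=b(y)$'' should be integrated in $du$, not $dy$. Second, no Hadamard finite part is needed. Because $h(\fc_r)=0$, the excision boundary terms $\big(h(\fc_r+\delta)+h(\fc_r-\delta)\big)/\delta$ vanish. So $\text{P.V.}\int h'/(u-\fc_r)\,du$ is an ordinary Cauchy principal value and equals $\Pi_1$.
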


Recall that Ran $b(y)=\mathbb R$ is the continuous spectrum of $\mathcal {R}$. If $\fc\in\mathbb R$ is an eigenvalue of $\mathcal {R}$, we call $\fc$ an embedded eigenvalue.
\begin{lemma}\label{lem-iff-emb}
A number $\fc_r\in \mathbb R$ is an embedded eigenvalue of $\mathcal {R}$ if and only if
\begin{align*}
  \mathcal J_1(\fc_r)=\mathcal J_2(\fc_r)=0.
\end{align*}
\end{lemma}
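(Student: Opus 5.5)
We prove Lemma \ref{lem-iff-emb} by reducing the existence of an $H^2$ eigenfunction at a real $\fc_r$ to a matching condition between the two one-sided decaying solutions. Then we show that this condition is exactly the vanishing of the boundary values of $W$ in Lemma \ref{lem-lim-eigen}.

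First we fix the meaning of an embedded eigenvalue. We say $\fc_r\in\mathbb R$ is an embedded eigenvalue if \eqref{eq-Rayleigh1} at $\fc=\fc_r$ has a nontrivial solution $\psi\in H^2(\mathbb R)$, equivalently one that decays at $\pm\infty$. By the Rayleigh criterion this forces $b''(y_\fc)=0$, since otherwise $b''/(b-\fc_r)$ has a nonintegrable pole. For real $\fc$, Proposition \ref{prop-phi} gives $\phi(y,\fc_r)=(b(y)-\fc_r)\phi_1(y,\fc_r)$, because $\phi_2\equiv1$ when $\fc_i=0$. This function vanishes simply at $y_\fc$ and grows exponentially at both ends by \eqref{eq-phi-est}. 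On each half-line $\{y>y_\fc\}$ and $\{y<y_\fc\}$ the functions $\varphi^\pm(y,\fc_r)$ of \eqref{eq-phi-pm} are well defined, and they decay like $e^{-C_4|y|}$ at $\pm\infty$ respectively. Near $y_\fc$ we expand $1/\phi^2=\frac{1}{b'(y_\fc)^2(y-y_\fc)^2}-\frac{b''(y_\fc)}{b'(y_\fc)^3(y-y_\fc)}+O(1)$. This shows that $\varphi^\pm$ extend continuously across $y_\fc$ as $y\to y_\fc$ from the side containing $\pm\infty$. They then continue to the other side either via a Hadamard finite-part integral or by solving the ODE, and the logarithmic term disappears precisely when $b''(y_\fc)=0$.

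Next, any decaying solution must equal $a^+\varphi^+$ on the right and $a^-\varphi^-$ on the left. It solves \eqref{eq-Rayleigh1} across $y_\fc$ exactly when values and derivatives match at $y_\fc$, which is the gluing condition described in the introduction for $\varphi_*$. We compare $\varphi^+$ and $\varphi^-$ through $\phi$. On $y\neq y_\fc$, formula \eqref{eq-phi-pm} gives
\begin{align*}
\varphi^+(y)-\varphi^-(y)=-\phi(y)\,\mathrm{F.P.}\!\int_{-\infty}^{+\infty}\frac{dy'}{\phi(y',\fc_r)^2}=-\phi(y)\,\mathcal J_1(\fc_r),
\end{align*}
provided $b''(y_\fc)=0$, so that the finite part is the principal value plus the $\Pi_1,\Pi_2$ decomposition. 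To verify this identification, we split $\frac{1}{\phi^2}=\frac{1}{(b-\fc_r)^2}+\frac{1}{(b-\fc_r)^2}\bigl(\phi_1^{-2}-1\bigr)$ and use $\mathrm{F.P.}\!\int\frac{b'(y)}{(b-\fc_r)^2}dy=0$. This reproduces $\frac{1}{b'(y_\fc)}\Pi_1+\Pi_2=\mathcal J_1$. Since $\phi$ is independent of $\varphi^\pm$, a decaying solution exists iff $\varphi^+\equiv\varphi^-$, that is, iff $\mathcal J_1(\fc_r)=0$, and this is in addition to $b''(y_\fc)=0$, i.e.\ $\mathcal J_2(\fc_r)=0$. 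For the converse we also check that when $b''(y_\fc)\neq0$ no decaying solution exists. The reason is that the residue term $\mathcal J_2$ gives a $\phi\log|y-y_\fc|$ jump, which cannot be matched in $H^2$.

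The delicate step is justifying the finite-part computation. We would handle it rigorously by working on $[y_\fc-\delta,y_\fc+\delta]$, integrating $1/\phi^2$ only over $|y'-y_\fc|>\delta$, and letting $\delta\to0$. We expect to need the bound $|\phi_1-1|\le C|y-y_\fc|^2$ from \eqref{eq-phi1-est} to control the remainder. Alternatively, one can connect this to Lemma \ref{lem-lim-eigen} directly: the real and imaginary parts of $\lim W$ as $\fc_i\to0\pm$ are the finite part and $\mp\pi$ times the residue. That identification gives the stated equivalence immediately.
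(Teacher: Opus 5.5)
Your proposal is correct and follows essentially the route the paper relies on: the paper cites this lemma from earlier work, but carries out the same gluing analysis in Proposition~\ref{prop-linear-growth}. There, an $H^2$ eigenfunction must equal $a^\pm\varphi^\pm$ on the two sides of $y_\fc$, the values always match ($\varphi^\pm\to -1/b'(y_\fc)$), the derivatives blow up logarithmically unless $\mathcal J_2(\fc_r)=0$, and their jump is then $b'(y_\fc)\mathcal J_1(\fc_r)$ up to sign, via the same $\Pi_1^\pm,\Pi_2^\pm$ splitting you use; your finite-part identity $\varphi^+-\varphi^-=-\mathcal J_1\phi$ is a clean global repackaging of those one-sided limits. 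The one overstatement is your closing alternative: Lemma~\ref{lem-eigen} only covers $\fc_i\neq 0$, so the boundary values of $W$ in Lemma~\ref{lem-lim-eigen} do not by themselves give the equivalence, and linking $\lim W=0$ to a real-axis $H^2$ eigenfunction requires exactly the gluing argument you gave.
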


Without loss of generality, in the rest of this paper, we assume that $\fc_*=0$ is the embedded eigenvalue of $\mathcal {R}$, therefore, we have
\begin{align*}
  \mathcal J_1(0)=\mathcal J_2(0)=0.
\end{align*}

\subsection{Regularity estimates for the homogeneous solutions}
In this subsection, we give some useful estimates for the homogeneous solutions that constructed in Proposition \ref{prop-phi}.

We start with some auxiliary estimates. To this end, we introduce the good derivative
\begin{align*}
  \partial_G=\partial_{\fc_r}+\frac{\partial_y}{b'(y_{\fc})}.
\end{align*}
 Taking any order of $\partial_G$ of $b(y)-\fc_r$ does not change the fact that it vanishes to first order at $y=y_{\fc}$. More precisely, if \(b(y)\) satisfies Assumption \ref{assum} and \(\fc_r\in\mathbb{R}\), a direct calculation shows that for \(n=0,1,2,3\),
\begin{equation}\label{eq-pargkb1}
    \begin{aligned}
      \left|\partial_G^n\left(\frac{1}{\left(b(y)-\fc_r\right)^2}\right)\right|
      &\lesssim \frac{1}{\left|y-y_{\fc}\right|^2},
  \end{aligned}
  \end{equation}
  and
  \begin{equation}\label{eq-pargkb2}
    \begin{aligned}
      \left|\partial_{G}^{n}\partial_{y}\left(\frac{\partial_{G}\left(\left(b(y)-\fc_r\right)^2\right)}{\left(b(y)-\fc_r\right)^2}\right)\right|\le C.
    \end{aligned}
  \end{equation}

Next, we derive regularity estimates for $\phi_1(y,\fc_r)$ that constructed in Proposition \ref{prop-phi}.
\begin{lemma}\label{lem-paryphi}
  For $n=1,2,3$, it holds that
  \begin{equation}\label{eq-paryphi1}
    \begin{aligned}
      \left|\frac{\partial_{G}^n\phi_1(y,\fc_r)}{\phi_1(y,\fc_r)}\right|\lesssim \left|y-y_{\fc}\right|^3 ,\quad
      \left|\partial_y\left(\frac{\partial_{G}^n\phi_1(y,\fc_r)}{\phi_1(y,\fc_r)}\right)\right|\lesssim \left|y-y_{\fc}\right|^2,\text{ for }\left|y-y_{\fc}\right|\le 1,
  \end{aligned}
  \end{equation}
  and
\begin{equation}\label{eq-paryphi2}
  \begin{aligned}
      \left|\frac{\partial_{G}^n\phi_1(y,\fc_r)}{\phi_1(y,\fc_r)^2}\right|\lesssim \min\left\{e^{-C_4\left|y-y_{\fc}\right|},\left|y-y_{\fc}\right|^3\right\}, \text{ for }y\in\mathbb R.
  \end{aligned}
\end{equation}
\end{lemma}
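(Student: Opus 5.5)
The plan is to differentiate the equation \eqref{eq-phi1} for $\phi_1$ with the good derivative, exploiting two facts. First, $\partial_G$ commutes with $\partial_y$, since the coefficient $1/b'(y_\fc)$ depends only on $\fc_r$. Second, for any smooth $f(y,\fc_r)$,
\[
\frac{d}{d\fc_r}f(y_\fc,\fc_r)=(\partial_G f)(y_\fc,\fc_r),
\]
because $dy_\fc/d\fc_r=1/b'(y_\fc)$. Applying this to the normalization $\phi_1(y_\fc,\fc_r)=1$, $\phi_1'(y_\fc,\fc_r)=0$ gives, for $u_n:=\partial_G^n\phi_1$,
\[
u_n(y_\fc)=u_n'(y_\fc)=0,\qquad n\ge1 .
\]
So each $u_n$ solves an inhomogeneous version of \eqref{eq-phi1} with zero Cauchy data at $y_\fc$.

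\textbf{Step 1 (equations for $u_n$).} Set $h=\partial_G\big((b-\fc_r)^2\big)/(b-\fc_r)^2$. Note that $h=2\big(b'(y)/b'(y_\fc)-1\big)/(b-\fc_r)$ is bounded, and its $y$- and $G$-derivatives are controlled by \eqref{eq-pargkb2}. Applying $\partial_G^n$ to \eqref{eq-phi1} gives
\[
\pa_y\big((b-\fc_r)^2u_n'\big)-(b-\fc_r)^2u_n=F_n,
\]
where $F_n$ is a sum of terms of the form
\[
-\pa_y\big(\partial_G^j((b-\fc_r)^2)\,u_{n-j}'\big)+\partial_G^j((b-\fc_r)^2)\,u_{n-j},\qquad j\ge1 .
\]
Each $\partial_G^j((b-\fc_r)^2)$ equals $(b-\fc_r)^2$ times a bounded smooth factor.

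\textbf{Step 2 (ratio representation).} Write $g_n=u_n/\phi_1$. Using the equation for $\phi_1$, one gets $\pa_y\big((b-\fc_r)^2\phi_1^2g_n'\big)=\phi_1F_n$, and hence
\[
g_n(y)=\int_{y_\fc}^{y}\frac{1}{(b(z)-\fc_r)^2\phi_1(z)^2}\int_{y_\fc}^{z}\phi_1(z')F_n(z')\,dz'\,dz .
\]
For $|y-y_\fc|\le1$, the claim $g_n=O(|y-y_\fc|^3)$ and $g_n'=O(|y-y_\fc|^2)$ requires $F_n=O(|y-y_\fc|^3)$, not merely $O(|y-y_\fc|^2)$.

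\textbf{Step 3 (main obstacle: the cancellation).} For $n=1$, \eqref{eq-phi1} forces $\phi_1''(y_\fc)=1/3$, so $\phi_1'\approx(y-y_\fc)/3$. The two leading contributions to $F_1$ near $y_\fc$ are
\[
-\big(h\,b'^2(y-y_\fc)^3/3\big)'\approx -h(y_\fc)b'^2(y-y_\fc)^2
\qquad\text{and}\qquad
h\,b'^2(y-y_\fc)^2 ,
\]
and these cancel. Hence $F_1=O(|y-y_\fc|^3)$.

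I would make this cancellation robust, so that it extends to $n=2,3$, by integrating by parts in the inner integral $\int\phi_1F_n$. This moves the outer $\pa_y$ onto $\phi_1$ and produces a boundary term $-\partial_G^j((b-\fc_r)^2)\,u_{n-j}'\phi_1$. For $j<n$, the inductive bounds $u_{n-j}=\phi_1\,O(|y-y_\fc|^3)$ and $u_{n-j}'=\phi_1\,O(|y-y_\fc|^2)$ then give inner integrals of size $O(|z-y_\fc|^4)$ directly. For the $j=n$ term, which involves $\phi_1$ itself, the explicit cancellation above is used. Checking that the expansion to third order needs only $b\in C^5$ locally, which holds since $b''\in H^4$, completes the induction near $y_\fc$. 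This yields \eqref{eq-paryphi1}.

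\textbf{Step 4 (far field, \eqref{eq-paryphi2}).} For $|y-y_\fc|\ge1$, the inductive bounds give $|F_n|\lesssim\phi_1(1+|y-y_\fc|)^{m}$ for some $m$, using $|\phi_1'|\le\phi_1$. The inner integral is then at most $\phi_1(z)^2\,\mathrm{poly}(|z-y_\fc|)$, by monotonicity of $\phi_1$ away from $y_\fc$. Dividing by $(b-\fc_r)^2\phi_1^2$ shows $g_n$ grows at most polynomially. Therefore
\[
\frac{u_n}{\phi_1^2}=\frac{g_n}{\phi_1}\lesssim\mathrm{poly}\,e^{-C_4|y-y_\fc|}\lesssim e^{-C_4'|y-y_\fc|}
\]
by \eqref{eq-phi1-est}, where the constant $C_4$ is slightly decreased (renamed). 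Combining this with the near-field bound, together with $\phi_1\ge1$, gives the minimum in \eqref{eq-paryphi2}.
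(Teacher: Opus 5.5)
Your proposal is correct and is essentially the paper's proof. Both apply $\partial_G^n$ to \eqref{eq-phi1}, use the zero Cauchy data of $\partial_G^n\phi_1$ at $y_\fc$, write $\partial_G^n\phi_1/\phi_1$ as a double integral and induct on $n$ near $y_\fc$, then get \eqref{eq-paryphi2} from polynomial growth against $\phi_1\gtrsim e^{C_4|y-y_\fc|}$. Your remark that $C_4$ must be slightly decreased at that last step is a fair correction to the paper.

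The one difference is in Step 3: the paper gets your cancellation as an exact identity rather than by Taylor expansion. Writing $\partial_G\big((b-\fc_r)^2\big)=h\,(b-\fc_r)^2$ and using the equations for $\phi_1$ and the lower $\partial_G^i\phi_1$, the source becomes the right-hand side of \eqref{eq-partialgnphi}, namely $(b-\fc_r)^2\phi_1\sum_{i=0}^{n-1}C_n^i\,\partial_G^{n-1-i}\partial_y h\,\partial_G^i\partial_y\phi_1$. This is $O(|y-y_\fc|^3)$ directly from \eqref{eq-pargkb2} and the inductive bounds. For example, your $j=n$ term satisfies $-\big(k(b-\fc_r)^2\phi_1'\big)'+k(b-\fc_r)^2\phi_1=-k'(b-\fc_r)^2\phi_1'$ exactly when $\partial_G^n\big((b-\fc_r)^2\big)=k\,(b-\fc_r)^2$, so no third-order expansion of $\phi_1$ is needed.
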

\begin{proof}
  Recall that $\phi_1(y,\fc_r)$ solves \eqref{eq-phi1}. We have
  \begin{align*}
    \left( (b(y)-\fc_r)^2\phi_1'(y,\fc_r)\right)'=(b(y)-\fc_r)^2\phi_1(y,\fc_r).
  \end{align*}
Applying $\partial_G^n$ to the above equation, after tedious calculation, one can verify that the following holds  
\begin{align}
   \notag &-\left( \left(b(y)-\fc_r\right)^2 \left(\frac{\pa_G^n\phi_1(y,\fc_r)}{\phi_1(y,\fc_r)}\right)'\phi_1(y,\fc_r)^2\right)'\\
    &\qquad=\left(b(y)-\fc_r\right)^2\phi_1(y,\fc_r)\cdot\sum_{i=0}^{n-1} C_n^i\cdot\partial_G^{n-i-1}\partial_y\left(\frac{\partial_G\left(\left(b(y)-\fc_r\right)^2\right)}{\left(b(y)-\fc_r\right)^2}\right)
    \partial_G^i\partial_y\phi_1(y,\fc_r),
\label{eq-partialgnphi}\end{align}
  where $C_n^i=\frac{n!}{(n-i)!i!}$ is the binomial coefficient.

 Thanks to $\phi_1(y_{\fc},\fc_r)=1$, $\phi_1'(y_{\fc},\fc_r)=0$, we have $\partial_G^n\phi_1(y_{\fc},\fc_r)=\partial_G^n\partial_y\phi_1(y_{\fc},\fc_r)=0$ for $\fc_r\in\mathbb{R}$. For $n=1$,  we have by \eqref{eq-partialgnphi} that
\begin{equation}\label{eq-pargphin1}
  \begin{aligned}
    &\frac{\pa_G\phi_1(y,\fc_r)}{\phi_1(y,\fc_r)}
    =-\int_{y_{\fc}}^y \frac{\int_{y_{\fc}}^{y'} \left( \frac{\pa_{G''}\left(\left(b(y'')-\fc_r\right)^2\right)}{\left(b(y'')-\fc_r\right)^2}\right)'
    \left(b(y'')-\fc_r\right)^2\phi_1'(y'',\fc_r)\phi_1(y'',\fc_r) d y''}{\left(b(y')-\fc_r\right)^2\phi_1(y',\fc_r)^2} d y', \end{aligned}
\end{equation}
which together with \eqref{eq-pargkb2}, \eqref{eq-pargphin1} and \eqref{eq-phi1-est} gives
\begin{align*}
    &\left|\frac{\pa_G\phi_1(y,\fc_r)}{\phi_1(y,\fc_r)}\right|
    \lesssim \left|\int_{y_{\fc}}^y \frac{\int_{y_{\fc}}^{y'}
    \left(b(y'')-\fc_r\right)^2\phi_1'(y'',\fc_r)\phi_1(y'',\fc_r) d y''}{\left(b(y')-\fc_r\right)^2\phi_1(y',\fc_r)^2} d y'\right|\\
    &\lesssim\left|\int_{y_{\fc}}^y \frac{\int_{y_{\fc}}^{y'}
    \phi_1'(y'',\fc_r)\phi_1(y'',\fc_r) d y''}{\phi_1(y',\fc_r)^2} d y'\right|
    \lesssim\left|\int_{y_{\fc}}^y \frac{
    \phi_1(y',\fc_r)^2-1}{\phi_1(y',\fc_r)^2} d y'\right|\lesssim \min\left\{\left|y-y_{\fc}\right|,\left|y-y_{\fc}\right|^3\right\}.
\end{align*}
And it follows directly that \begin{align*}
    \left|\partial_y\left(\frac{\pa_G\phi_1(y,\fc_r)}{\phi_1(y,\fc_r)}\right)\right|=
    \left|\frac{\int_{y_{\fc}}^{y}
    \left(b(y')-\fc_r\right)^2\phi_1'(y',\fc_r)\phi_1(y',\fc_r) d y'}{\left(b(y)-\fc_r\right)^2\phi_1(y,\fc_r)^2} \right|
    \lesssim \min\left\{1,\left|y-y_{\fc}\right|^2\right\}.
\end{align*}
For $n>1$, using \eqref{eq-partialgnphi} and the fact that
\begin{align*}
  \partial_G^i\partial_y\phi_1(y,\fc_r)=\partial_y\phi_1(y,\fc_r)\cdot \frac{\partial_G^i\phi_1(y,\fc_r)}{\phi_1(y,\fc_r)}+\phi_1(y,\fc_r)\partial_y\left(\frac{\partial_G^i\phi_1(y,\fc_r)}{\phi_1(y,\fc_r)}\right),\ \text{for} \ 1\le i\le n,
\end{align*}
we write
\begin{align*}
    &\frac{\pa_G^n\phi_1(y,\fc_r)}{\phi_1(y,\fc_r)}=-\int_{y_{\fc}}^{y}\frac{1}{\left(b(y')-\fc_r\right)^2\phi_1(y',\fc_r)^2}
    \int_{y_{\fc}}^{y'}\left(b(y'')-\fc_r\right)^2\phi_1(y'',\fc_r)\cdot\\
    &\qquad\qquad \sum_{i=0}^{n-1} C_n^i\cdot\partial_{G''}^{n-i-1}\partial_{y''}\left(\frac{\partial_{G''}\left(\left(b(y'')-\fc_r\right)^2\right)}{\left(b(y'')-\fc_r\right)^2}\right)
    \partial_{G''}^i\partial_{y''}\phi_1(y'',\fc_r)dy'' d y'\\
    &\quad=-\int_{y_{\fc}}^{y}\frac{1}{\left(b(y')-\fc_r\right)^2\phi_1(y',\fc_r)^2}
    \int_{y_{\fc}}^{y'}\left(b(y'')-\fc_r\right)^2\phi_1(y'',\fc_r)\cdot\\
    &\quad\qquad\qquad  \partial_{G''}^{n-1}\partial_{y''}\left(\frac{\partial_{G''}\left(\left(b(y'')-\fc_r\right)^2\right)}{\left(b(y'')-\fc_r\right)^2}\right)
    \phi_1'(y'',\fc_r)dy'' d y'\\
    &\quad\quad -\int_{y_{\fc}}^{y}\frac{1}{\left(b(y')-\fc_r\right)^2\phi_1(y',\fc_r)^2}
    \int_{y_{\fc}}^{y'}\left(b(y'')-\fc_r\right)^2\phi_1(y'',\fc_r)\cdot\\
    &\quad\qquad\qquad \sum_{i=1}^{n-1} C_n^i\cdot\partial_{G''}^{n-i-1}\partial_{y''}\left(\frac{\partial_{G''}\left(\left(b(y'')-\fc_r\right)^2\right)}{\left(b(y'')-\fc_r\right)^2}\right)
    \phi_1(y'',\fc_r)\partial_y\left( \frac{\partial_{G''}^i\phi_1(y'',\fc_r)}{\phi_1(y'',\fc_r)}\right)dy'' d y'\\
    &\quad\quad -\int_{y_{\fc}}^{y}\frac{1}{\left(b(y')-\fc_r\right)^2\phi_1(y',\fc_r)^2}
    \int_{y_{\fc}}^{y'}\left(b(y'')-\fc_r\right)^2\phi_1(y'',\fc_r)\cdot\\
    &\quad\qquad\qquad \sum_{i=1}^{n-1} C_n^i\cdot\partial_{G''}^{n-i-1}\partial_{y''}\left(\frac{\partial_{G''}\left(\left(b(y'')-\fc_r\right)^2\right)}{\left(b(y'')-\fc_r\right)^2}\right)
    \phi_1'(y'',\fc_r)\frac{\partial_{G''}^i\phi_1(y'',\fc_r)}{\phi_1(y'',\fc_r)}dy'' d y'\\
    &\quad=I_1+I_2+I_3.
\end{align*}

For $I_1$, similar to the case $n=1$, applying \eqref{eq-pargkb2}, we have
\begin{equation}\label{eq-i1}
  \begin{aligned}
      \left|I_1\right|&\lesssim \left|\int_{y_{\fc}}^{y}\frac{\phi_1(y',\fc_r)^2-1}{\phi_1(y',\fc_r)^2} d y'\right|\lesssim \min\left\{\left|y-y_{\fc}\right|,\left|y-y_{\fc}\right|^3\right\}.
  \end{aligned}
\end{equation}

For $I_2$ and $I_3$,  it follows for $\left|y-y_{\fc}\right|\le 1$ and $1\le i\le n-1$ that \begin{align*}
    \left|\frac{\partial_{G}^i\phi_1(y,\fc_r)}{\phi_1(y,\fc_r)}\right|\lesssim \left|y-y_{\fc}\right|^3 ,\quad
    \left|\partial_y\left(\frac{\partial_{G}^i\phi_1(y,\fc_r)}{\phi_1(y,\fc_r)}\right)\right|\lesssim \left|y-y_{\fc}\right|^2.
\end{align*}
Then we have for $\left|y-y_{\fc}\right|\le 1$ and $n=1,2,3$ that
\begin{equation}\label{eq-i2}
  \begin{aligned}
      \left|I_2\right|\lesssim \left|\int_{y_{\fc}}^{y}
      \int_{y_{\fc}}^{y'} \left|y''-y_{\fc}\right|^2 d y'' d y'\right|
      \lesssim \left|y-y_{\fc}\right|^3.
  \end{aligned}
\end{equation}
Integrating by parts also gives
\begin{equation}\label{eq-i3}
  \begin{aligned}
      &\left|I_3\right|\lesssim \left|\int_{y_{\fc}}^{y} \frac{1}{\phi_1(y',\fc_r)^2}
      \int_{y_{\fc}}^{y'}\partial_{y''}\left(\phi_1(y'',\fc_r)^2-1\right) \cdot\left|y''-y_{\fc}\right|^3 d y'' d y'\right|\\
      &\lesssim \left|\int_{y_{\fc}}^{y} \frac{\phi_1(y',\fc_r)^2-1}{\phi_1(y',\fc_r)^2}
      \cdot \left|y'-y_{\fc}\right|^3 d y'+\int_{y_{\fc}}^{y} \frac{1}{\phi_1(y',\fc_r)^2}
      \int_{y_{\fc}}^{y'}\left(\phi_1(y'',\fc_r)^2-1\right) \cdot\left|y''-y_{\fc}\right|^2 d y'' d y'\right|\\
      &\lesssim \left|y-y_{\fc}\right|^3.
  \end{aligned}
\end{equation}
Then combining  \eqref{eq-i1}-\eqref{eq-i3},  we have for $\left|y-y_{\fc}\right|\le 1$ and $n=1,2,3$,
\begin{align*}
    \left|\frac{\partial_{G}^n\phi_1(y,\fc_r)}{\phi_1(y,\fc_r)}\right|\lesssim \left|y-y_{\fc}\right|^3 ,\quad
    \left|\partial_y\left(\frac{\partial_{G}^n\phi_1(y,\fc_r)}{\phi_1(y,\fc_r)}\right)\right|\lesssim \left|y-y_{\fc}\right|^2.
\end{align*}
Thus, the desired estimate \eqref{eq-paryphi1} follows.

Regarding \eqref{eq-paryphi2}, the previous arguments already imply that $\frac{\partial_{G}^n\phi_1(y,\fc_r)}{\phi_1(y,\fc_r)}$ grows at most polynomially.
Moreover, by \eqref{eq-phi1-est}, we have $\phi_1(y,\fc_r)\ge Ce^{C_4|y-y_{\fc}|}$, which together yields the desired estimate $ \left|\frac{\partial_{G}^n\phi_1(y,\fc_r)}{\phi_1(y,\fc_r)^2}\right|\lesssim \min\left\{e^{-C_4|y-y_{\fc}|}, \left|y-y_{\fc}\right|^3\right\}$. \end{proof}

\subsection{Properties of $\mathcal{J}_1(c_r)$ and $\mathcal{J}_2(c_r)$}
 In this subsection, we provide some properties of $\mathcal{J}_1(c_r)$ and $\mathcal{J}_2(c_r)$, which are defined as 
\begin{align*}
  \mathcal{J}_2(\fc_r) =\pi \frac{b''(y_\fc)}{{b'}(y_\fc)^3},\quad \mathcal J_1(\fc_r)=\frac{1}{b'(y_\fc)}\Pi_1(\fc_r)+\Pi_2(\fc_r),
\end{align*}
with
\begin{align*}
  \Pi_1(\fc_r)= \text{P.V.}\int  \frac{b'(y_\fc)-b'(y)}{(b(y)-\fc_r)^2} dy,\quad \Pi_2(\fc)= \int_{-\infty}^{+\infty} \frac{1}{(b(y)-\fc_r)^2}\left(\frac{1}{\phi_1(y,\fc_r)^2}-1\right) dy.
\end{align*}
These properties play a key role in the derivation of the representation formula for the stream function.

We first establish regularity estimates.
\begin{proposition}\label{prop-j1}
  Let $\fc_r\in \mathbb{R}$. It holds that
  \begin{align*}
    \left|\partial_{\fc_r}^n \mathcal{J}_1(\fc_r) \right| + \left|\partial_{\fc_r}^n \mathcal{J}_2(\fc_r) \right|\le C,\quad n=0,1,2,3.
  \end{align*}
\end{proposition}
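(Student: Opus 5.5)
We need to bound derivatives up to order 3 in $\fc_r$ of $\mathcal J_1$ and $\mathcal J_2$.

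The term $\mathcal J_2$ is immediate. Since $y_\fc=b^{-1}(\fc_r)$, we have $\partial_{\fc_r}y_\fc=1/b'(y_\fc)$, so each $\fc_r$-derivative of $b''(y_\fc)/b'(y_\fc)^3$ is a polynomial in $b^{(j)}(y_\fc)$ for $j\le 5$, divided by a power of $b'(y_\fc)$. The lower bound $b'\ge c_m$ controls the denominators. The assumption $b''\in H^4\subset W^{3,\infty}$ bounds $b'',\dots,b^{(5)}$ in $L^\infty$.

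For $\mathcal J_1$ the plan is to remove the principal value and the moving singularity by a change of variables that freezes the critical point. I would substitute $y=y_\fc+s$. Then $\partial_{\fc_r}$ acting on a function of $(y_\fc+s,\fc_r)$ at fixed $s$ is exactly $\partial_G$ applied to the original function. Hence every $\fc_r$-derivative of an integral $\int F(y,\fc_r)\,dy$ becomes $\int \partial_G^nF\,dy$, provided the resulting integrands are integrable uniformly in $\fc_r$.

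For $\Pi_2$, I would write the integrand as
\[
\frac{1}{(b-\fc_r)^2}\cdot\frac{1-\phi_1^2}{\phi_1^2}.
\]
Apply the Leibniz rule for $\partial_G^n$. By \eqref{eq-pargkb1}, derivatives of the first factor are $\lesssim|y-y_\fc|^{-2}$. For the second factor, $\partial_G^m(\phi_1^{-2})$ is a combination of products of the ratios $\partial_G^{j}\phi_1/\phi_1$ times $\phi_1^{-2}$. Near $y_\fc$, the estimate $|\phi_1-1|\lesssim|y-y_\fc|^2$ together with \eqref{eq-paryphi1} gives $|\partial_G^m(\phi_1^{-2}-1)|\lesssim|y-y_\fc|^2$, which cancels the singularity. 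Far away, \eqref{eq-paryphi2} and polynomial growth of the ratios combined with $\phi_1\ge Ce^{C_4|y-y_\fc|}$ give exponential decay. The term $\partial_G^m(1)=0$ for $m\ge1$ causes no issue. Dominated convergence then justifies differentiating under the integral.

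For $\Pi_1$, after the shift I would symmetrize the principal value. I would write
\[
\Pi_1=\int_0^\infty \big[g(s)+g(-s)\big]\,ds,\qquad g(s)=\frac{b'(y_\fc)-b'(y_\fc+s)}{(b(y_\fc+s)-\fc_r)^2}.
\]
Taylor expansion shows $g(s)=-b''(y_\fc)/(b'(y_\fc)^2s)+O(1)$. So $g(s)+g(-s)$ is bounded near $0$, and all its $\fc_r$-derivatives (equivalently, derivatives in $y_\fc$ at fixed $s$) stay bounded by Taylor remainders in integral form. Those remainders involve $b$ up to order $5+1$, which is available from $b''\in H^4$, possibly using the $L^2$ bound and Cauchy--Schwarz on the top-order term.

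For $|s|\ge1$ the integrand is $O(s^{-2})$ uniformly, and so are its $\partial_G$-derivatives, using boundedness of $b^{(j)}$ and $b'\ge c_m$. Multiplying by $1/b'(y_\fc)$ and applying Leibniz then finishes the bound for $\mathcal J_1$.

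The main obstacle is the $\Pi_1$ expansion near $s=0$ at third order. There one needs enough regularity of $b$ to keep the symmetrized quotient and its derivatives bounded uniformly. I would handle this with integral-form Taylor remainders, e.g.
\[
b'(y_\fc+s)-b'(y_\fc)=s\int_0^1 b''(y_\fc+\tau s)\,d\tau,
\]
and similarly for $b-\fc_r$, so that the quotient becomes a smooth function of $(y_\fc,s)$ divided by $s$. Its odd part cancels upon symmetrization.
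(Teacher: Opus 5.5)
Your proposal is correct. For $\mathcal J_2$ and $\Pi_2$ it is essentially the paper's argument; the real difference is in how you handle $\Pi_1$.

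\textbf{Where you agree with the paper.} The paper also expands $\partial_G^n$ of the $\Pi_2$ integrand by Leibniz and closes with \eqref{eq-pargkb1} and Lemma \ref{lem-paryphi}. It justifies $\partial_{\fc_r}^n\Pi_2=\int\partial_G^n(\cdots)\,dy$ through \eqref{eq-inte-welldef}, i.e.\ the observation that the $\partial_y^n$-parts integrate to zero. Your shift $y=y_\fc+s$ does the same thing more transparently.

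\textbf{Where you differ: $\Pi_1$.} The paper substitutes $v=b(y)$ and integrates by parts once. This turns $\frac{1}{b'(y_\fc)}\Pi_1(\fc_r)$ into a constant multiple of the Hilbert transform of the \emph{fixed} function $\partial_v^2(b^{-1})$, evaluated at $\fc_r$; see \eqref{Hilbert}. The numerator factor $b'(y_\fc)$ cancels the prefactor, so all $\fc_r$-dependence sits in the evaluation point. Derivatives then commute with $\mathcal H$, and $|\mathcal H g|\le\|\widehat g\|_{L^1}\lesssim\|g\|_{H^1}$ finishes in one line, at cost $\|\partial_v^{n+2}(b^{-1})\|_{H^1}\lesssim\|b''\|_{H^{n+1}}$.

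Your route shifts, symmetrizes the principal value, and writes $g(s)=-s^{-1}A/B^2$ with $A,B$ integral-form difference quotients, then differentiates in $y_\fc$. This is more elementary and self-contained, and it makes the cancellation of the $b''(y_\fc)/s$ singularity explicit, at the price of more bookkeeping.

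One imprecision: at order $n=3$, the $\fc_r$-derivative of the symmetrized integrand need not be bounded near $s=0$. Cauchy--Schwarz with $b^{(6)}\in L^2$ (equivalently, the $C^{1/2}$ modulus of $b^{(5)}$) gives only $O(|s|^{-1/2})$. That is integrable, so it suffices, and it uses the same regularity the paper needs.

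\textbf{What the Hilbert-transform form buys.} It also shows for free that $\frac{1}{b'(y_\fc)}\Pi_1\in H^4$, hence small for large $|\fc_r|$. The paper reuses this in Proposition \ref{prop-lowerbound}. Your pointwise bounds would not give that directly, though it is not needed for the present statement.
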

\begin{proof}
  For $\mathcal J_2(\fc_r)$, a direct calculation yields
  \begin{align*}
    \left|\partial_{\fc_r}^n \mathcal{J}_2(\fc_r) \right|&=\pi\left|\partial_{\fc_r}^n\left(\frac{b''(y_\fc)}{{b'}(y_\fc)^3}\right)\right|
    \lesssim \left\|b''(y)\right\|_{H^{n+1}}.
  \end{align*}
 For $\mathcal J_1(\fc_r)$,
  by the Hilbert transform, we write
  \begin{align}
   \notag &\frac{1}{b'(y_{\fc})}\Pi_1(\fc_r)=\frac{1}{b'(y_{\fc})}\text{P.V.}\int_{\mathbb{R}} \frac{b'(y_{\fc})-b'(y)}{(b(y)-\fc_r)^2}dy\\
    &\quad=\frac{1}{b'(y_{\fc})}\text{P.V.}\int_{\mathbb{R}}\frac{1}{v-\fc_r}\pa_v\left(\frac{b'(y_\fc)-b'(b^{-1}(v))}{b'(b^{-1}(v))} \right)dv=-\mathcal H\left(\pa_v^2(b^{-1})(v)\right)(\fc_r).  \label{Hilbert}
\end{align}
By the property of Hilbert transform, for $g\in H^1(\mathbb{R})$, we have
\begin{align*}
    \left|\mathcal H(g)(y)\right|\le \left\|\widehat{\mathcal H(g)}\left(\xi\right)\right\|_{L^1}\le \left\|\widehat{g}\left(\xi\right)\right\|_{L^1}\lesssim \|g\|_{H^1},
\quad \text{and}\quad  \frac{d}{dy}\mathcal H(g)(y)=\mathcal H(g')(y).
\end{align*}
Then we have for $n=0,1,2,3$,
\begin{align*}
    \left|\partial_{\fc_r}^n\left(\frac{1}{b'(y_{\fc})}\Pi_1(\fc_r)\right)\right|=\left|\partial_{\fc_r}^{n}\mathcal H\left(\pa_v^2(b^{-1})(v)\right)(\fc_r)\right|
    \lesssim \left\|\pa_v^{n+2}(b^{-1})(v)\right\|_{H^{1}}\lesssim \left\|b''(y)\right\|_{H^{n+1}}.
\end{align*}
For $\Pi_2$, directly applying \eqref{eq-phi1-est}, we have
\begin{align*}
    \left|\Pi_2(\fc_r)\right|&\le \int_{-\infty}^{+\infty}\frac{1}{\left|b(y)-\fc_r\right|^2}
    \left|\frac{1}{\phi_1(y,\fc_r)^2}-1\right|dy\lesssim \int_{-\infty}^{+\infty}\frac{\left|y-y_{\fc}\right|^2}{\left|b(y)-\fc_r\right|^2}\cdot\frac{\phi_1(y,\fc_r)+1}{\phi_1(y,\fc_r)^2}dy\le C.
\end{align*}
Recall that $\phi_1(y,\fc_r)$ satisfies \eqref{eq-phi1} and $b''(y)\in H^4$ from Assumption \ref{assum}, we can deduce  for $n=1,2,3$ that  $\phi_1^{(n)}(y,\fc_r)$ is continuous and  the following limit exists
\begin{align*}
    \lim_{y\to y_{\fc}}\partial_y^n\left(\frac{1}{\left(b(y)-\fc_r\right)^2}\left(\frac{1}{\phi_1(y,\fc_r)^2}-1\right) \right)<\infty.
  \end{align*} Then the integral below is well-defined, and the fact $b(\infty)=\infty$ implies that
\begin{equation}\label{eq-inte-welldef}
  \begin{aligned}
      \int_{-\infty}^{+\infty}\partial_y^n
      \left(\frac{1}{\left(b(y)-\fc_r\right)^2}
      \left(\frac{1}{\phi_1(y,\fc_r)^2}-1\right)\right)dy=0.
  \end{aligned}
\end{equation}
Therefore, noticing $\partial_G=\partial_{\fc_r}+\frac{\partial_y}{b'(y_{\fc})}$, we have for $n=1,2,3$ \begin{align*}
    \partial_{\fc_r}^n\Pi_2(\fc_r) &=
    \int_{-\infty}^{+\infty}
   \partial_G^n\left(\frac{1}{\left(b(y)-
    \fc_r\right)^2}\left(\frac{1}{\phi_1(y,\fc_r)^2}-1\right)\right)dy.
\end{align*}
 Applying \eqref{eq-pargkb1} and Lemma \ref{lem-paryphi}, we have
\begin{equation}
  \begin{aligned}
    \notag  &\left|\partial_G\left(\frac{1}{\left(b(y)-\fc_r\right)^2}\left(\frac{1}{\phi_1(y,\fc_r)^2}-1\right)\right)\right|\\
      &\quad =\left|\partial_G\left(\frac{1}{\left(b(y)-\fc_r\right)^2}\right)\left(\frac{1}{\phi_1(y,\fc_r)^2}-1\right)
       -\frac{4}{\left(b(y)-\fc_r\right)^2\phi_1(y,\fc_r)^2}\frac{\partial_G\phi_1(y,\fc_r)}{\phi_1(y,\fc_r)}\right|\\
       &\quad\lesssim \min\left\{1,\frac{1}{\left(y-y_\fc\right)^2}\right\} +e^{-C_4\left|y-y_{\fc}\right|}.
  \end{aligned}
\end{equation}
And the same process  gives the same bounds for $\left|\partial_G\left(\frac{1}{\left(b(y)-\fc_r\right)^n}\left(\frac{1}{\phi_1(y,\fc_r)^2}-1\right)\right)\right|$, $n=2,3$.
Combining the above bounds with \eqref{eq-inte-welldef} gives $\left|\partial_{\fc_r}^n\Pi_2(\fc_r)\right|\leq C $, $n=1,2,3$.
\end{proof}

Next, we give a lower bound estimate for $\mathcal{J}_1(\fc_r)^2 + \mathcal{J}_2(\fc_r)^2$.
\begin{proposition}\label{prop-lowerbound}
  For $b(y)$ satisfies Assumption \ref{assum} and  $\partial_{\fc_r}\mathcal{J}_1(0)+ i\partial_{\fc_r}\mathcal{J}_2(0)\neq 0$, there exists a constant $C_l>0$ such that
  \begin{align*}
    \mathcal{J}_1(c_r)^2 + \mathcal{J}_2(c_r)^2\ge C_l,\quad \fc_r\in \mathbb{R}\setminus(-1,1),
  \end{align*}
  and
  \begin{align*}
    \frac{\mathcal{J}_1(c_r)^2 + \mathcal{J}_2(c_r)^2}{\fc_r^2}\ge C_l,\quad \fc_r\in (-2,2).
  \end{align*}
\end{proposition}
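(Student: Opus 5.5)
The argument splits into three regimes: a neighbourhood of the embedded eigenvalue, a bounded range away from it, and large $|\fc_r|$. Throughout, $\mathcal J(\fc_r):=\mathcal J_1(\fc_r)+i\mathcal J_2(\fc_r)$. By Lemma \ref{lem-lim-eigen}, $\mathcal J$ (or its conjugate) is the boundary value of $W(\fc)$ on the real axis. Lemma \ref{lem-iff-emb}, together with the assumption that $\fc_*=0$ is the only eigenvalue, gives $\mathcal J(\fc_r)\neq 0$ for all $\fc_r\neq0$. Both $\mathcal J_1$ and $\mathcal J_2$ are real, so $|\mathcal J|^2=\mathcal J_1^2+\mathcal J_2^2$.

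\emph{Near $0$.} Since $\mathcal J(0)=0$ and Proposition \ref{prop-j1} gives $|\mathcal J''|\le C$ on $\mathbb R$, Taylor's formula yields
\begin{align*}
\mathcal J(\fc_r)=\fc_r\,\mathcal J'(0)+O(\fc_r^2).
\end{align*}
The hypothesis $\mathcal J'(0)\neq0$ then gives $|\mathcal J(\fc_r)|\ge \tfrac12|\mathcal J'(0)||\fc_r|$ for $|\fc_r|\le\delta$, with $\delta=|\mathcal J'(0)|/(2C)$. This controls $(\mathcal J_1^2+\mathcal J_2^2)/\fc_r^2$ from below on $(-\delta,\delta)$.

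\emph{Compact range.} On $\delta\le|\fc_r|\le R$, for any fixed $R\ge2$, the function $|\mathcal J|$ is continuous by Proposition \ref{prop-j1} and nonvanishing, so it has a positive minimum. This handles both the remainder of $(-2,2)$, where $\fc_r^2\le4$, and the compact part of $\mathbb R\setminus(-1,1)$.

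\emph{Large $|\fc_r|$ (main obstacle).} Here we need a uniform lower bound as $|\fc_r|\to\infty$, and compactness is not available.
\begin{itemize}
\item Because $b''\in H^4$, we have $b''(y)\to0$ and $b'(y)\to b'_{\pm}$ as $y\to\pm\infty$. Hence $\mathcal J_2(\fc_r)\to0$, and the lower bound must come from $\mathcal J_1$.
\item I would show $\Pi_1(\fc_r)\to0$. By \eqref{Hilbert}, it is a Hilbert transform of an $H^1$ function, which decays at infinity since its Fourier transform lies in $L^1$.
\item The key step is to show that $\Pi_2(\fc_r)$ stays strictly negative in the limit. Pointwise the integrand is $\le0$, since $\phi_1\ge1$. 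Near $y_\fc$ it is comparable to $-\tfrac{2}{b'(y_\fc)^2}\phi_1''(y_\fc)$ on $|y-y_\fc|\le1$.
\item From \eqref{eq-phi1}, $\phi_1''(y_\fc)=1/3$ after expanding $(b-\fc_r)^2\approx b'(y_\fc)^2(y-y_\fc)^2$. Combined with $|\phi_1-1|\le C|y-y_\fc|^2$ and the lower bound $\phi_1\ge Ce^{C_4|y-y_\fc|}$, this gives
\begin{align*}
\Pi_2(\fc_r)\le -c<0,
\end{align*}
with $c$ depending only on $c_m$.
\end{itemize}
Thus $\mathcal J_1(\fc_r)\le -c/2$ for $|\fc_r|\ge R$ with $R$ large.

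The delicate point is making the negativity of $\Pi_2$ quantitative and uniform in $\fc_r$, using only the monotonicity bounds on $b'$. If this proves awkward, an alternative is the following. As $|\fc_r|\to\infty$, $b$ is asymptotically affine near $y_\fc$, so $\mathcal J_1$ converges to its value for the Couette-type profile. That profile has no eigenvalue, and its value is explicitly nonzero.

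Combining the three regimes gives both inequalities with $C_l$ the minimum of the three constants.
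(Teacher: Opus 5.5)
Your proposal is correct and is essentially the paper's argument. For large $|\fc_r|$, a uniform negative bound on $\Pi_2$ from \eqref{eq-phi1-est} dominates the decaying Hilbert-transform term $\frac{1}{b'(y_\fc)}\Pi_1$ from \eqref{Hilbert}; compactness plus Lemma \ref{lem-iff-emb} handles the bounded ranges away from $0$; and the nondegeneracy $\partial_{\fc_r}\mathcal J_1(0)+i\partial_{\fc_r}\mathcal J_2(0)\neq0$ handles a neighbourhood of $0$. The differences (Taylor with the $C^2$ bound of Proposition \ref{prop-j1} instead of L'H\^opital, Riemann--Lebesgue instead of the $H^4$ embedding) are cosmetic, and your sign $\mathcal J_1\le -c/2$ is the correct one since $\Pi_2\le 0$.
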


\begin{proof}
  First, we claim that $|\Pi_2(c_r)|$ admits an uniform lower bound $A>0$, which is independent of $c_r$. Following \eqref{eq-phi1-est}, we have
  \begin{align*}
      \left|\Pi_2(c_r)\right|\gtrsim \int_{-\infty}^{+\infty}\frac{1}{\left(y-y_{\fc}\right)^2}\frac{\left(y-y_{\fc}\right)^2\left(\phi_1(y,c_r)+1\right)}{\phi_1(y,c_r)^2}dy\ge A>0.
  \end{align*}
  Next, we show that $\Pi_1(c_r)$ becomes arbitrarily small for large $|c_r|$.
  To this end, we use \eqref{Hilbert} and the boundedness of the Hilbert transform in $L^2$ to get  $\frac{1}{b'(y_{\fc})}\Pi_1(\fc_r)\in H^{4}(\mathbb{R})$. Therefore, we can choose a sufficiently large $R > 0$ such that $\left| \frac{1}{b'(y_{\fc})} \Pi_1(\fc_r) \right| \le \frac{A}{2}$ for all $|\fc_r| > R$, which together with the lower bound of $\left|\Pi_2(\fc_r) \right|$ gives $\mathcal{J}_1(\fc_r)=\frac{1}{b'(y_{\fc})} \Pi_1(\fc_r)+ \Pi_2(\fc_r)\ge \frac{A}{2}$ for $|\fc_r|>R$.
On the other hand, by Assumption \ref{assum}, $\mathcal{R} $ has a single embedded eigenvalue at $0$. Lemma~\ref{lem-iff-emb} then implies that   $\mathcal{J}_1(\fc_r) + i\mathcal{J}_2(\fc_r) \neq 0$
on the compact set $\{ \fc_r \in \mathbb{R} \mid 1 \le |\fc_r| \le R\}$.
This shows that $\mathcal{J}_1(\fc_r)^2 + \mathcal{J}_2(\fc_r)^2$ attains a uniform positive lower bound for $1\leq|\fc_r|\leq R$.
To conclude, $\mathcal{J}_1(\fc_r)^2 + \mathcal{J}_2(\fc_r)^2$ is uniformly bounded below on $\mathbb{R} \setminus (-1, 1)$.

Next, we consider the case when $\fc_r \in (-2, 2)$. By L'Hospital's rule, we have
\begin{align*}
  \lim_{\fc_r\to 0}\frac{\mathcal{J}_1(\fc_r)^2 + \mathcal{J}_2(\fc_r)^2}{\fc_r^2}=\left(\partial_{\fc_r}\mathcal{J}_1(0)\right)^2+ \left(\partial_{\fc_r}\mathcal{J}_2(0)\right)^2.
\end{align*}
By our assumption, we have $\partial_{\fc_r}\mathcal{J}_1(0)+ i\partial_{\fc_r}\mathcal{J}_2(0)\neq 0$, then
there exists a constant $\delta>0$ such that
\begin{align*}
  \frac{\mathcal{J}_1(\fc_r)^2 + \mathcal{J}_2(\fc_r)^2}{\fc_r^2}\ge \frac{1}{2}\left(\left(\partial_{\fc_r}\mathcal{J}_1(0)\right)^2+ \left(\partial_{\fc_r}\mathcal{J}_2(0)\right)^2\right)\quad\text{for}\quad \fc_r\in (-\delta,\delta).
\end{align*}
We still use $\mathcal{J}_1(c_r) + i\mathcal{J}_2(c_r) \neq 0$ on the compact set $\{ \fc_r \in \mathbb{R} \mid \delta \le |\fc_r| \le 2 \}$ to obtain that $\frac{\mathcal{J}_1(\fc_r)^2 + \mathcal{J}_2(\fc_r)^2}{\fc_r^2}$ attains an uniform positive lower bound for $\delta \le |\fc_r| \le 2$.
Finally, $\frac{\mathcal{J}_1(\fc_r)^2 + \mathcal{J}_2(\fc_r)^2}{\fc_r^2}$ is uniformly bounded below on $(-2, 2)$.
\end{proof}

\section{Representation formula of the stream function}

In this section, we derive the representation formula of the stream function $\Psi(t,y)$ that solves the following linear system
\begin{align}\label{eq: LinearEuler-Psi}
  \left\{
\begin{aligned}
&\pa_t \omega(t,y)+i\mathcal {R}\omega(t,y)=0,\\
&(\pa_y^2-1)\Psi(t,y)=\omega(t,y),\\
&\omega|_{t=0}(y)=\omega_{\text{in}}(y)=(\pa_y^2-1)\Psi_{\text{in}}(y).
\end{aligned}
\right.
\end{align}

By the Laplace transform, for some $c^\diamond>0$, we have the following representation formula:
\begin{equation}\label{eq-rep-Psi}
  \begin{aligned}
    \Psi(t,y)=&\lim_{T\to \infty}\frac{1}{2\pi i}\int_{-T}^T\big[e^{-i(\fc_r-ic^\diamond) t}(\fc_r-ic^\diamond-\mathcal{L})^{-1}\Psi_{\text{in}}\\
    &\qquad\qquad\qquad\qquad\qquad\qquad-e^{-i(\fc_r+ic^\diamond) t}(\fc_r+ic^\diamond-\mathcal{L})^{-1}\Psi_{\text{in}}\big]d\fc_r,
  \end{aligned}
\end{equation}
where $\mathcal L=(\pa_y^2-1)^{-1}\mathcal{R}(\pa_y^2-1)$. A rigorous deduction of \eqref{eq-rep-Psi} can be found in \cite{LiMasmoudiZhao2022critical}.

Let $(\fc-\mathcal{L})^{-1}\Psi_{\text{in}}=i\Phi(y,\fc)$, then $\Phi(y,\fc)$ solves the inhomogeneous Rayleigh equation:
\begin{align}\label{eq-Phi-ori}
  \pa_y^2\Phi(y,\fc)-\Phi(y,\fc)-\frac{b''(y)}{b(y)-\fc}\Phi(y,\fc)=i\frac{\omega_{\text{in}}(y)}{b(y)-\fc}.
\end{align}
Recall that $\phi(y,\fc)$ given in Proposition \ref{prop-phi} solves the homogeneous Rayleigh equation \eqref{eq-Rayleigh1}, it is easy to check that
\begin{align}\label{eq-Phi}
  \pa_y\Big(\phi^2(y,\fc)\pa_y\Big(\frac{\Phi(y,\fc)}{\phi(y,\fc)}\Big)\Big)=i\omega_{\text{in}}(y)\phi_1(y,\fc).
\end{align}
Here we briefly write $\phi_1(y,\fc)=\phi_1(y,\fc_r)\phi_2(y,\fc)$. It is clear that $\phi_1(y,\fc)=\phi_1(y,\fc_r)$ for $\fc_i=0$. For $\fc_i\neq 0$, there is a unique solution $\Phi(y,\fc)$ of \eqref{eq-Phi} which decays at infinity. Then $\Phi(y,\fc)$ can be written as follows:{{}
\begin{align}\label{eq-Phi-exp}
\Phi(y,\fc)=&\Phi_{i,l}(y,\fc)-\mu(\omega_{\text{in}},\fc)\Phi_{h,l}(y,\fc)=\Phi_{i,r}(y,\fc)-\mu(\omega_{\text{in}},\fc)\Phi_{h,r}(y,\fc),
\end{align}
where
\begin{align*}
  \Phi_{i,l}(y,\fc)=&i\phi(y,\fc)\int_{-\infty}^y\frac{\int_{y_{\fc}}^{y'}\omega_{\text{in}}(y'')\phi_1(y'',\fc)dy''}{\phi(y',\fc)^2}dy',\ \Phi_{h,l}(y,\fc)=i\phi(y,\fc)\int_{-\infty}^y\frac{1}{\phi(y',\fc)^2}dy',\\
  \Phi_{i,r}(y,\fc)=&i\phi(y,\fc)\int_{+\infty}^y\frac{\int_{y_{\fc}}^{y'}\omega_{\text{in}}(y'')\phi_1(y'',\fc)dy''}{\phi(y',\fc)^2}dy',\  \Phi_{h,r}(y,\fc)=i\phi(y,\fc)\int_{+\infty}^y\frac{1}{\phi(y',\fc)^2}dy',
\end{align*}
and
\begin{align*}
  \mu(\omega_{\text{in}},\fc)=\frac{1}{\int_{-\infty}^{+\infty}\frac{1}{\phi(y',\fc)^2}dy'}\int_{-\infty}^{+\infty}\frac{\int_{y_{\fc}}^{y'}\omega_{\text{in}}(y'')\phi_1(y'',\fc)dy''}{\phi(y',\fc)^2}dy'.
\end{align*}}

Let
\begin{align}\label{def-jstar}
  J^*(\omega_{\text{in}},\fc)=\int_{-\infty}^{+\infty}\frac{\int_{y_{\fc}}^{y'}\omega_{\text{in}}(y'')\phi_1(y'',\fc)dy''}{\phi(y',\fc)^2}dy'.
\end{align}
Recalling the definition of the Wronskian, we write
\begin{align*}
  \mu(\omega_{\text{in}},\fc)=\frac{J^*(\omega_{\text{in}},\fc)}{W(\fc)}.
\end{align*}
Since $\fc=0$ is an embedded eigenvalue, it follows from Lemma \ref{lem-lim-eigen} and Lemma \ref{lem-iff-emb} that $\lim_{\fc\to0}W(\fc)=0$. Consequently, as $\fc\to0$, the coefficient $\mu(\omega_{\emph{in}},\fc)$ develops a singularity. This behavior is quite different to the case without embedded eigenvalue studied in \cite{LiMasmoudiZhao2022critical}.

Therefore, in order to derive the representation formula, it is necessary to analyze the limit $\lim_{\pm\fc_i> 0,\left|\fc\right|\to 0} \fc\mu(\omega_{\text{in}},\fc)$. To this end, we need to investigate both
\begin{align*}
  \lim_{\pm\fc_i> 0,\left|\fc\right|\to 0} \frac{W(\fc)}{\fc}\text{ and }\lim_{\fc_i\to 0\pm} J^*(\omega_{\text{in}},\fc).
\end{align*}

\subsection{The limiting absorption principle}
In this subsection, we derive the limit of $\fc\Phi(y,\fc)$ as $\fc\to0$. To do so, we first study the limit about $W(\fc)$ and $J^*(\omega_{\text{in}},\fc)$.
\subsubsection{An important limit of the Wronskian}
Now, we analyze the limit
\begin{align}\label{eq-lim-Wron}
  \lim_{\pm\fc_i> 0,\left|\fc\right|\to 0}\frac{W(\fc)}{\fc},
\end{align}
which is a key step in deriving the projection operator. Recall that
\begin{align*}
  W(\fc)=\int_{\mathbb{R}}\f{1}{\phi^2(y,\fc)}dy=\det\left(\begin{matrix}\varphi^{-}(y,\fc)&\varphi^{+}(y,\fc)\\
  \pa_y\varphi^{-}(y,\fc)&\pa_y\varphi^{+}(y,\fc)\end{matrix}\right).
\end{align*}
The main difficulty in evaluating \eqref{eq-lim-Wron} arises from the fact that $W(\fc)$ is not analytic with respect to $\fc$. To overcome this difficulty, we introduce a new Wronskian.

\begin{lemma}\label{lem-wronskian}
  There exists $Y>0$, such that for all $\fc$ with $0 < |\fc_i| \le \varepsilon$, we have $\varphi^{+}(Y,\fc) \neq 0$ and $\varphi^{-}(-Y,\fc) \neq 0$. Define
  \begin{align}\label{eq-Wron-modi}
    f(\fc) \eqdef \frac{1}{\varphi^{+}(Y,\fc)\varphi^{-}(-Y,\fc)},\quad \mathcal{W}(\fc)=\mathcal{W}_r(\fc)+i\mathcal{W}_i(\fc) \eqdef f(\fc)W(\fc).
  \end{align}
Then $\mathcal{W}(\fc)$ is analytic in the domain $0 < |\fc_i| \le \varepsilon$, where $\varepsilon$ is given in Lemma \ref{lem-lim-eigen}.
 \end{lemma}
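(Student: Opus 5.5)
The plan is to write $\mathcal W$ as a ratio of quantities built from solutions of \eqref{eq-Rayleigh1} with $\fc$-independent normalizations, so that every $\fc$-dependent normalization contained in $\phi$ (which is not analytic in $\fc$, because $y_\fc=b^{-1}(\fc_r)$ depends only on $\fc_r$) cancels out.

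\textbf{Step 1: normalized decaying solutions.} For $0<|\fc_i|\le\varepsilon$ the coefficient $\frac{b''(y)}{b(y)-\fc}$ is smooth in $y$ and holomorphic in $\fc$, since $b(y)-\fc\neq0$. For a fixed $Y>0$, I will construct $\psi^+(y,\fc)$ on $[Y,\infty)$ as the solution of \eqref{eq-Rayleigh1} with $\psi^+(y,\fc)=e^{-y}(1+o(1))$ as $y\to+\infty$. This comes from the Volterra equation
\begin{align*}
\psi^+(y,\fc)=e^{-y}+\int_y^{+\infty}\sinh(y'-y)\,\frac{b''(y')}{b(y')-\fc}\,\psi^+(y',\fc)\,dy'.
\end{align*}
Its Neumann series converges because $b''\in H^4\subset L^1\cap L^\infty$ and $|b(y')-\fc|\ge|\fc_i|$. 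The series converges locally uniformly in $\fc$, and each term is holomorphic, so $\psi^+$ and $\pa_y\psi^+$ are holomorphic in $\fc$. I then extend $\psi^+$ to all $y$ by solving the ODE, which preserves holomorphy by holomorphic dependence on parameters. In the same way I define $\psi^-$ with $\psi^-\sim e^{y}$ as $y\to-\infty$.

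\textbf{Step 2: relating to $\varphi^\pm$.} By \eqref{eq-phi-est}, $\varphi^+$ decays at $+\infty$ and $\varphi^-$ decays at $-\infty$, so each is a multiple of the corresponding $\psi^\pm$:
\begin{align*}
\varphi^+=\alpha(\fc)\psi^+,\qquad \varphi^-=\beta(\fc)\psi^-,\qquad \alpha,\beta\neq0.
\end{align*}
Here $\alpha,\beta$ are generally not analytic. Since the Wronskian of two solutions is constant in $y$, the determinant formula for $W$ gives
\begin{align*}
W(\fc)=\alpha\beta\,\mathrm{Wr}(\psi^-,\psi^+),
\end{align*}
and therefore
\begin{align*}
\mathcal W(\fc)=\frac{\alpha\beta\,\mathrm{Wr}(\psi^-,\psi^+)}{\alpha\psi^+(Y,\fc)\,\beta\psi^-(-Y,\fc)}=\frac{\mathrm{Wr}(\psi^-,\psi^+)}{\psi^+(Y,\fc)\,\psi^-(-Y,\fc)}.
\end{align*}
The numerator and both factors of the denominator are holomorphic by Step 1. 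Hence $\mathcal W$ is analytic wherever the denominator does not vanish, and the denominator is nonzero exactly when $\varphi^+(Y,\fc)\neq0$ and $\varphi^-(-Y,\fc)\neq0$.

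\textbf{Step 3: choice of $Y$ (main obstacle).} I need a single $Y$ with $\varphi^+(Y,\fc)\neq0$ and $\varphi^-(-Y,\fc)\neq0$ for all admissible $\fc$.

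First consider $y_\fc<Y$. Then
\begin{align*}
\varphi^+(Y,\fc)=-\phi(Y,\fc)\int_Y^{\infty}\phi^{-2}\,dy'.
\end{align*}
By Proposition \ref{prop-phi}, on $[Y,\infty)$ we have $\phi\approx(b-\fc_r)\phi_1$ with a phase close to real, because $\phi_2-1=O(|\fc_i|)$. So the integral has nonzero real part and $\varphi^+(Y,\fc)\ne0$. The same argument handles $\varphi^-(-Y,\fc)$ when $y_\fc>-Y$.

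The hard regime is $|\fc_r|$ large, where $y_\fc$ may lie beyond $\pm Y$. There I would instead work with $\psi^\pm$ directly. Because $b''$ decays, $\frac{b''}{b-\fc}$ is small on $[Y,\infty)$ once $Y$ is large, except near $y_\fc$. Near $y_\fc$ the coefficient is bounded by
\begin{align*}
\frac{|b''(y)|}{|b(y)-\fc|}\lesssim\frac{\|b''\|_{L^\infty(|y|\ge Y)}}{|y-y_\fc|},
\end{align*}
which I would control after deforming the Volterra estimate. The goal is $\psi^+(Y,\fc)=e^{-Y}(1+o(1))$ uniformly in $\fc$, which I would obtain by choosing $Y$ so that $\|\langle y\rangle b''\|_{L^1\cap L^\infty(|y|\ge Y)}$ is small.

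If this uniform control fails for very large $|\fc_r|$, the fallback is to prove the lemma for $|\fc_r|$ in a bounded set. This suffices for the subsequent use near $\fc_*=0$.
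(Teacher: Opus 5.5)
Your Steps 1--2 are correct and identify the right mechanism. The paper gives no argument of its own here, only a pointer to Remark B.1 of \cite{LiMasmoudiZhao2022critical}. You write $\varphi^+=\alpha\psi^+$ and $\varphi^-=\beta\psi^-$ with non-analytic scalars. The product $\alpha\beta$ cancels in $W/(\varphi^+(Y,\fc)\varphi^-(-Y,\fc))$, and what remains is a quotient of holomorphic functions. One correction: $H^4(\mathbb R)\not\subset L^1(\mathbb R)$. The Neumann series still converges, because $|b(y)-\fc|\ge\max(|\fc_i|,c_m|y-y_\fc|)$ and $b''\in L^2$ give $b''/(b-\fc)\in L^1$, locally uniformly in $\{\fc_i\neq0\}$.

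The gap is in Step 3, which is the first assertion of the lemma: a single $Y$ valid for \emph{all} $\fc$ with $0<|\fc_i|\le\varepsilon$.

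Your argument for $y_\fc<Y$ uses $\phi\approx(b-\fc_r)\phi_1$. In fact $\phi=(b-\fc)\phi_1\phi_2$ with $\phi(y_\fc,\fc)=-i\fc_i$, so near $y=Y$ the factor $(b-\fc)^{-2}$ is far from real whenever $b(Y)-\fc_r\lesssim|\fc_i|$. The real part of the integral can actually vanish. Suppose $b(y)=y$ on $[Y-1,\infty)$ and $s=Y-y_\fc\in(0,1)$. Then $\phi=\sinh(y-y_\fc)-i\fc_i\cosh(y-y_\fc)$ there, and
\[
\int_Y^\infty\frac{dy'}{\phi(y',\fc)^2}=\frac{e^{-s}}{(1-i\fc_i)(\sinh s-i\fc_i\cosh s)},
\]
which is purely imaginary when $\tanh s=\fc_i^2$. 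It is still nonzero, so the conclusion holds, but not for your reason.

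More seriously, the regime $y_\fc\ge Y$ is left open, and the tool you propose cannot close it. Smallness of $\|\langle y\rangle b''\|_{L^1\cap L^\infty(|y|\ge Y)}$ does not make the Volterra perturbation small uniformly in $\fc$, since
\[
\int_{|y-y_\fc|<1}\frac{|b''(y)|}{|b(y)-\fc|}\,dy\approx\frac{2|b''(y_\fc)|}{b'(y_\fc)}\log\frac{1}{|\fc_i|}\longrightarrow\infty\quad\text{as }\fc_i\to0 .
\]
So a Gronwall bound gives nothing uniform, and the same problem occurs for $y_\fc$ slightly below $Y$. What is missing is an actual critical-layer estimate. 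One way is to write $\frac{b''}{b-\fc}=\frac{b''}{b'}\partial_y\log(b-\fc)$ and integrate by parts in the Volterra equation. This uses three facts:
\begin{itemize}
\item $\log(b-\fc)$ is locally integrable uniformly in $\fc_i$;
\item $b''$ and $b'''$ are small on $[Y,\infty)$, which follows from $b''\in H^4$;
\item $\partial_y(e^{y}\psi^+)$ can be controlled, although it is itself logarithmically singular at $y_\fc$.
\end{itemize}

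Your fallback does cover what the paper later uses. Restrict to $|\fc_r|\le R$ and take $Y$ with $b(Y)-R$ and $-b(-Y)-R$ large. Then the potential on $[Y,\infty)$ is uniformly small, and $\psi^+(Y,\fc)=e^{-Y}(1+o(1))$ is immediate. Only $|\fc_r|<2\varepsilon$ enters the computation of $\lim W(\fc)/\fc$, and the reflection argument there is local. But this proves a weaker statement than the lemma as written.
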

For the proof of this lemma, we refer the readers to Remark B.1 of \cite{LiMasmoudiZhao2022critical}. We also recall that
\[
\varphi^{-}(y,\fc)=\phi(y,\fc)\int_{-\infty}^y\frac{1}{\phi(y',\fc)^2}\,dy'
\]
can be extended continuously in $\fc$ to the real axis for \(y<y_\fc=b^{-1}(\fc_r)\). For \(|\fc_r|<2\varepsilon<b(Y)\)(with \(\varepsilon>0\)  small) the values \(\varphi^{+}(Y,\fc_r)\) and \(\varphi^{-}(-Y,\fc_r)\) are well-defined.
Consequently, \(f(\fc)\) itself admits a continuous extension to the real axis whenever \(|\fc_r|<2\varepsilon<b(Y)\). Moreover, $\pa_{\fc_r}f(\fc_r)$ is also continuous for $|\fc_r|<2\varepsilon$. Now we are in a position to calculate the limit.
\begin{lemma}
  For the Wronskian $W(\fc)$ of the homogeneous Rayleigh equation \eqref{eq-Rayleigh1} satisfying $\lim_{\fc_i\to0\pm}W(\fc)=\mathcal J_1(\fc_r)\mp i\mathcal J_2(\fc_r)$, we have
  \begin{align*}
    \lim_{\pm\fc_i> 0,\left|\fc\right|\to 0}\frac{W(\fc)}{\fc}
    &= \partial_{\fc_r}\mathcal{J}_1(0)\mp i\partial_{\fc_r}\mathcal{J}_2(0).
\end{align*}
\end{lemma}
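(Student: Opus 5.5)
The plan is to exploit the analyticity of the modified Wronskian $\mathcal W(\fc)=f(\fc)W(\fc)$ from Lemma \ref{lem-wronskian}. We work in the upper half strip $0<\fc_i\le\varepsilon$; the lower one is symmetric. We write
\begin{align*}
\frac{W(\fc)}{\fc}=\frac{1}{f(\fc)}\cdot\frac{\mathcal W(\fc)}{\fc}.
\end{align*}
By the continuity of $f$ up to the real axis for $|\fc_r|<2\varepsilon$, the first factor tends to $1/f(0)$. Here $f(0)$ is finite and nonzero because $\varphi^{+}(Y,0)\varphi^{-}(-Y,0)\neq0$ and both values are finite. It therefore suffices to compute $\lim \mathcal W(\fc)/\fc$ as $\fc\to0$ from $\fc_i>0$.

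\textbf{Step 1: boundary values of $\mathcal W$.} By Lemma \ref{lem-lim-eigen} and the continuity of $f$, $\mathcal W$ extends continuously to the segment $(-2\varepsilon,2\varepsilon)$ with boundary value
\begin{align*}
\mathcal W(\fc_r+i0)=f(\fc_r)\bigl(\mathcal J_1(\fc_r)-i\mathcal J_2(\fc_r)\bigr).
\end{align*}
This boundary value is $C^1$ in $\fc_r$, because $\partial_{\fc_r}f$ is continuous and Proposition \ref{prop-j1} controls $\partial_{\fc_r}\mathcal J_{1,2}$. It also vanishes at $0$, since $\mathcal J_1(0)=\mathcal J_2(0)=0$.

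\textbf{Step 2: analytic function with $C^1$ boundary values.} Since $\mathcal W$ is analytic in the strip and continuous up to a segment where its trace $g(\fc_r)$ is $C^1$ with $g(0)=0$, I would show that $\mathcal W'(\fc)$ extends continuously to the boundary with value $g'(\fc_r)$. One route is Schwarz reflection combined with a Cauchy-integral representation. Concretely, I would write $\mathcal W$ on a small half-disc as a Cauchy integral over its boundary. Then $\mathcal W'$ equals the Cauchy integral of $g'$ on the real segment plus a smooth contribution from the arc. Plemelj/Privalov theory requires Hölder continuity of $g'$. This is available because Proposition \ref{prop-j1} gives bounds up to third derivatives of $\mathcal J_{1,2}$, and I would also check that $\partial_{\fc_r}^2 f$ is bounded near $0$, if needed via the same good-derivative arguments as in Lemma \ref{lem-paryphi}.

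Given the continuous extension of $\mathcal W'$, we write
\begin{align*}
\mathcal W(\fc)=\mathcal W(0)+\int_0^1\mathcal W'(s\fc)\fc\,ds=\fc\int_0^1\mathcal W'(s\fc)\,ds.
\end{align*}
It follows that $\mathcal W(\fc)/\fc\to g'(0)$ as $|\fc|\to0$ uniformly in direction. Using $\mathcal J_{1,2}(0)=0$, the product rule gives
\begin{align*}
g'(0)=f(0)\bigl(\partial_{\fc_r}\mathcal J_1(0)-i\partial_{\fc_r}\mathcal J_2(0)\bigr).
\end{align*}
Dividing by $f(0)$ yields the claim, and for $\fc_i<0$ the sign of $\mathcal J_2$ flips.

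\textbf{Main obstacle.} The main obstacle is Step 2: proving rigorously that $\mathcal W'$ extends continuously to the real axis. The naive problem is that $|\fc|\to0$ along directions nearly tangent to the axis versus along the imaginary direction. Analyticity is exactly what makes the limit direction-independent. If the Hölder-regularity route for $g'$ proves delicate, I would instead apply the Cauchy integral formula directly to $h(\fc)=\mathcal W(\fc)-g'(0)\fc$. Its boundary trace is $o(|\fc_r|)$ by the $C^1$ property, and a Phragmén–Lindelöf/Poisson-kernel estimate in the half-disc would then show $h(\fc)=o(|\fc|)$.
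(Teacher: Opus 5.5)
Your proof is correct and shares the paper's skeleton: pass to the analytic $\mathcal W=fW$, identify its trace $g(\fc_r)=f(\fc_r)(\mathcal J_1-i\mathcal J_2)(\fc_r)$ with $g(0)=0$, show $\mathcal W'$ extends continuously to the axis, linearize at $0$, and divide by $f(0)$. The real difference is how you get boundary regularity of $\mathcal W'$.

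\textbf{The paper's route.} It splits $\mathcal W=u_0+u_1$. Here $u_0$ is the Poisson extension of $\chi_\varepsilon g$, and $u_1$ has zero trace on $(-\varepsilon,\varepsilon)$, so it extends harmonically by Schwarz reflection. Since $\partial_{\fc_r}$ commutes with Poisson convolution, $\partial_{\fc_r}u_0$ is the Poisson extension of $(\chi_\varepsilon g)'$. It is therefore continuous up to the boundary for merely $C^1$ data. Cauchy--Riemann then handles $\partial_{\fc_i}$.

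\textbf{Your primary route.} Cauchy integral plus Privalov asks for H\"older continuity of $g'$, and hence for control of $\partial_{\fc_r}^2 f$, which the paper never establishes. That extra check is avoidable within your own framework. The Cauchy integral over the closed half-disc boundary vanishes below the segment, so the segment part of $\mathcal W'$ is a function smooth across the segment plus the Poisson average of $g'$. Continuity of $g'$ therefore already suffices.

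\textbf{Your fallback.} This is actually the most economical of the three arguments, and its sketch does hold up. Since $h$ is bounded, $h/\fc=O(|\fc|^{-1})$ near $0$, which is far below the critical Phragm\'en--Lindel\"of growth at a boundary point of opening $\pi$. Also $h/\fc$ is bounded on the boundary because $h(\fc_r)=o(|\fc_r|)$. So $h/\fc$ is bounded. A bounded analytic function whose boundary values are continuous and vanish at $0$ tends to $0$ there, by the Poisson representation. This uses only that $\mathcal W$ is bounded and continuous up to the segment and that $g$ is differentiable at $0$. The latter follows from continuity of $f$ at $0$ and differentiability of $\mathcal J_{1,2}$ at $0$ (using $\mathcal J_{1,2}(0)=0$), so no derivative of $f$ is needed at all.

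Note that this step genuinely uses analyticity, through subharmonicity of $|h/\fc|$. A merely harmonic function with $o(|x|)$ boundary trace need not be $o(|\fc|)$; the Poisson extension of $|x|/\log(1/|x|)$ is of size $\delta\log\log(1/\delta)$ at $i\delta$.
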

\begin{proof}We only present the proof for the case $\fc_i>0$, since the case $\fc_i<0$ can be treated in the same way. 

We continuously extend the $\mathcal{W}$ in \eqref{eq-Wron-modi} onto $\fc\in\mathbb{R}$ that:
 \begin{align} \label{extend:W}
  \mathcal{W}(\fc)=
  \left\{
\begin{aligned}
&\;\;\;\; \mathcal{W}(\fc),\quad \quad \quad \quad \quad \quad \quad \quad \quad \quad \quad \quad \quad \quad \fc_i>0,\\
&\lim_{\fc_i\to 0+}\mathcal{W}(\fc)=f(\fc_r)\left(\mathcal{J}_1(\fc_r)- i\mathcal{J}_2(\fc_r)\right),\quad
\fc_i=0. \end{aligned}
\right.
\end{align}
We begin by showing the partial derivative with respect to $\fc_r$ for this extended function is also continuous:
\begin{equation}\label{eq-limtoaxis}
  \begin{aligned}
    \partial_{\fc_r}\mathcal{W}(\fc_r)=\partial_{\fc_r} \Big(f(\fc_r)\left(\mathcal{J}_1(\fc_r)- i\mathcal{J}_2(\fc_r)\right)\Big)=\lim_{\fc_i\to 0+}\partial_{\fc_r}\mathcal{W}(\fc_r+i\fc_i),
    \quad \text{for}\quad \left|\fc_r\right|<\varepsilon.
  \end{aligned}
\end{equation}
Noticing  $\mathcal{W}(\fc)$ is analytic in $\left\{\fc\left|\fc_r\in\mathbb{R}, 0<\fc_i<\varepsilon\right.\right\}$ and the analyticity is a localized property. Since $f(\fc)$ is sufficiently regular in the  region $\left\{\fc\left|\fc_r\in(-2\varepsilon, 2\varepsilon),\; 0\le\fc_i<\varepsilon\right.\right\}$, we decompose the extended function \eqref{extend:W}  in this region as
$\mathcal{W}=u_0(\fc_r, \fc_i)+u_1(\fc_r, \fc_i)$. 

Here $u_0$ is defined as the harmonic extension of the boundary data via Poisson's formula:
 \begin{equation}\label{eq-lap1}
  \begin{aligned}
      \left\{\begin{array}{ll}
          \partial_{\fc_r}^2 u_0+\partial_{\fc_i}^2 u_0=0,\quad \fc_r \in\mathbb{R}, \fc_i\in (0,\varepsilon),\\
          u_0(\fc_r,0)=\chi_\varepsilon(\fc_r)
          f(\fc_r)\left(\mathcal{J}_1(\fc_r)- i\mathcal{J}_2(\fc_r)\right),\quad \fc_r\in\mathbb{R}, \end{array}\right.
  \end{aligned}
\end{equation}
where $\chi_\varepsilon(\fc_r)$ is a smooth cut-off function such that $\chi_\varepsilon(\fc_r)=1$ for $\left|\fc_r\right|\le \varepsilon$ and $\chi_\varepsilon(\fc_r)=0$ for $\left|\fc_r\right|\ge 2\varepsilon$. We then define $u_1=\mathcal{W}-u_0$, which satisfies
\begin{align}\label{eq-lap2}
  \left\{\begin{array}{ll}
          \partial_{\fc_r}^2 u_1+\partial_{\fc_i}^2 u_1=0,\quad \fc_r\in\mathbb{R}, \fc_i\in (0,\varepsilon),\\
          u_1(\fc_r,0)=\left(1-\chi_\varepsilon(\fc_r)\right)f(\fc_r)\left(\mathcal{J}_1(\fc_r)- i\mathcal{J}_2(\fc_r)\right),\quad \fc_r \in\mathbb{R}.
      \end{array}\right.
\end{align}

 For \eqref{eq-lap1}, by Proposition \ref{prop-j1} and Lemma \ref{lem-wronskian}, we have
\begin{align*}
   \chi_\varepsilon(\fc_r)
          f(\fc_r)\left(\mathcal{J}_1(\fc_r)- i\mathcal{J}_2(\fc_r)\right)\in L^\infty(\mathbb{R})\cap C^1(\mathbb{R}).
 \end{align*} 
By standard properties of Poisson's formula, this implies the continuity of the tangential derivative, namely,
\begin{equation*} \begin{aligned}
    \lim_{\fc_i\to 0+}\partial_{\fc_r}u_0(\fc_r,\fc_i)&=\partial_{\fc_r} \Big(\chi_\varepsilon(\fc_r)f(\fc_r)\left(\mathcal{J}_1(\fc_r)- i\mathcal{J}_2(\fc_r)\right)\Big)\\
    &=  \partial_{\fc_r} \Big(f(\fc_r)\left(\mathcal{J}_1(\fc_r)- i\mathcal{J}_2(\fc_r)\right)\Big)
    \quad \text{for}\quad \left|\fc_r\right|<\varepsilon.
  \end{aligned}
\end{equation*} 

 For \eqref{eq-lap2}, since the boundary value of $u_1$ is zero for $\fc_r\in(-\varepsilon,\varepsilon)$, we have by the Schwarz reflection principle that
$u_1(\fc_r,\fc_i)$ can be extended to be harmonic in the  disk $\left\{\fc\left|\left|\fc\right|<\varepsilon\right.\right\}$(See p. 95 of \cite{krantz1999handbook} for the details).
  Therefore, the following continuity of the derivative holds  \begin{equation*} \begin{aligned}
    \lim_{\fc_i\to 0+}\partial_{\fc_r}u_1(\fc_r,\fc_i)&=\partial_{\fc_r} \Big(\big(1-\chi_\varepsilon(\fc_r)\big)f(\fc_r)\left(\mathcal{J}_1(\fc_r)- i\mathcal{J}_2(\fc_r)\right)\Big)=0 \quad\text{for}\quad \left|\fc_r\right|<\varepsilon.
  \end{aligned}
\end{equation*}
  The above limits gives \eqref{eq-limtoaxis}.  To proceed, we notice the analyticity of $\mathcal{W}(\fc)$ gives the Cauchy-Riemann equations on $\left\{\fc\left|\fc_r\in(-\varepsilon, \varepsilon),\; 0<\fc_i<\varepsilon\right.\right\}$
  \begin{align*}
      \partial_{\fc_i}\mathcal{W}(\fc)=i\partial_{\fc_r}\mathcal{W}(\fc). \end{align*}
      This together with \eqref{eq-limtoaxis} gives the continuity for the $\pa_{\fc_i}$ derivative of the extended continuous function \eqref{extend:W}:
      \begin{align}  \label{eq-limtoaxis-ci}
   \notag \partial_{\fc_i}\mathcal{W}(\fc_r)&=\lim_{\fc_i\to 0+}\partial_{\fc_i}\mathcal{W}(\fc_r+i\fc_i)\\
   &=i \lim_{\fc_i\to 0+}\partial_{\fc_r}\mathcal{W}(\fc_r+i\fc_i)=i\partial_{\fc_r} \Big(f(\fc_r)\left(\mathcal{J}_1(\fc_r)- i\mathcal{J}_2(\fc_r)\right)\Big),
    \;\; \text{for}\;\; \left|\fc_r\right|<\varepsilon.
  \end{align}
For $\fc=0$, set
\begin{align*}
  \alpha \eqdef \partial_{\fc_r}\mathcal{W}_r(0)=\partial_{\fc_i}\mathcal{W}_i(0),\quad
  \beta \eqdef -\partial_{\fc_i}\mathcal{W}_r(0)=\partial_{\fc_r}\mathcal{W}_i(0).
\end{align*}
Recall that $\mathcal{W}(0)=0$, by the continuity of the derivatives \eqref{eq-limtoaxis} and \eqref{eq-limtoaxis-ci}, we can write
\begin{align*}
  \mathcal{W}_r(\fc)=\alpha \fc_r-\beta\fc_i +o\left(\sqrt{\fc_r^2+\fc_i^2}\right),\
  \mathcal{W}_i(\fc)=\beta \fc_r+\alpha\fc_i +o\left(\sqrt{\fc_r^2+\fc_i^2}\right),\ \text{as}\ \fc_r\to 0,\fc_i\to 0+.
\end{align*}
Then  it follows
\begin{align}\label{eq-existence}
  \lim_{\fc_i> 0,\left|\fc\right|\to 0}\frac{\mathcal{W}(\fc)}{\fc}
  &=\lim_{\fc_i> 0,\left|\fc\right|\to 0}\frac{\left(\alpha+i\beta\right)\left(\fc_r+i\fc_i\right)+o\left(\sqrt{\fc_r^2+\fc_i^2}\right)}{\fc_r+i\fc_i}
  =\alpha+i\beta=\partial_{\fc_r}\mathcal{W}(0).
\end{align}
Therefore, we obtain the desired limit as
\begin{align*}
    &\lim_{\fc_i> 0,\left|\fc\right|\to 0}\frac{W(\fc)}{\fc}
    =\lim_{\fc_i> 0,\left|\fc\right|\to 0}\frac{1}{f(\fc)}\frac{\mathcal{W}(\fc)}{\fc}
    =\frac{1}{f(0)}\lim_{\fc_i> 0,\left|\fc\right|\to 0}\frac{\mathcal{W}(\fc)}{\fc} =\frac{1}{f(0)}\partial_{\fc_r}\mathcal{W}(0),
    \end{align*}
 which together with the first identity in \eqref{eq-limtoaxis} and $\mathcal{J}_1(0)-i\mathcal{J}_2(0)=0$, $\left|\pa_{\fc_r}f(0)\right|<\infty$ gives
\begin{align}\label{eq-parb}
 \lim_{\fc_i> 0,\left|\fc\right|\to 0}\frac{W(\fc)}{\fc}=\partial_{\fc_r}\mathcal{J}_1(0)- i\partial_{\fc_r}\mathcal{J}_2(0). \end{align}
We finish the proof for the limit when $\fc_i> 0$.
\end{proof}

\subsubsection{Limit of $J^*(\omega_{\text{in}},\fc)$}
We next study the limit of $\mathcal{J}^*(\omega_{\text{in}},\fc)
=\int_{-\infty}^{+\infty}\frac{\int_{y_{\fc}}^{y}
\omega_{\text{in}}(y')\phi_1(y,\fc)dy'}{\phi(y,\fc)^2}dy$ as $\fc$ approaches the real axis.
\begin{lemma}\label{lem-j_3}
  Let $\omega_{\text{in}}(y)\in H^2(\mathbb{R})$. It holds that
  \begin{align*}
    \lim_{\fc_i\to 0\pm}\mathcal{J}^*(\omega_{\text{in}},\fc)=\mathcal{J}_{3}(\omega_{\text{in}},\fc_r)\pm i\mathcal J_4\left(\omega_{\text{in}},\fc_r\right),
\end{align*}
where
\begin{equation}\label{def-j_3-j_4}
  \begin{aligned}
    \mathcal J_3\left(\omega_{\text{in}},\fc_r\right)=\text{P.V.}\int_{\mathbb{R}}\frac{\int_{y_{\fc}}^{y}\omega_{\text{in}}(y')\phi_1(y',\fc_r)dy'}{\left(b(y)-\fc_r\right)^2\phi_1(y,\fc_r)^2}dy,\quad \mathcal J_4\left(\omega_{\text{in}},\fc_r\right)
    =\frac{\pi\omega_{\text{in}}(y_{\fc})}{b'(y_{\fc})^2}.
  \end{aligned}
\end{equation}
\end{lemma}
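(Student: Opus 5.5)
We prove the case $\fc_i\to0+$; the case $\fc_i\to 0-$ is symmetric. The idea is to write $\phi(y,\fc)^2=(b(y)-\fc)^2\phi_1(y,\fc)^2$ with $\phi_1(y,\fc)=\phi_1(y,\fc_r)\phi_2(y,\fc)$. Then
\begin{align*}
\mathcal J^*(\omega_{\text{in}},\fc)=\int_{\mathbb R}\frac{F(y,\fc)}{(b(y)-\fc)^2}\,dy,\qquad F(y,\fc)=\frac{\int_{y_\fc}^{y}\omega_{\text{in}}(y')\phi_1(y',\fc)dy'}{\phi_1(y,\fc)^2}.
\end{align*}
The key structural fact is that $F(y_\fc,\fc)=0$, because the inner integral starts at $y_\fc$. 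Hence the integrand has only a first-order-type singularity $F'(y_\fc)/(b-\fc)$, which is what produces the $\pi i$ residue term.

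\textbf{Step 1 (reduction to a principal value plus a Plemelj term).} Split the integral into $|y-y_\fc|\ge1$ and $|y-y_\fc|\le1$. On the outer region the integrand converges pointwise as $\fc_i\to0$. It is dominated using the exponential growth $\phi_1\ge Ce^{C_4|y-y_\fc|}$ from \eqref{eq-phi1-est}, the bound $|\int_{y_\fc}^y\omega_{\text{in}}\phi_1|\lesssim\|\omega_{\text{in}}\|_{L^2}\phi_1(y)$ (Cauchy–Schwarz plus monotone growth), and \eqref{eq-phi2-est}. Dominated convergence therefore gives the limit directly. On the inner region, change variables $v=b(y)$ and write $F(y,\fc)=F'(y_\fc,\fc)(y-y_\fc)+R(y,\fc)$ with $|R|\lesssim|y-y_\fc|^2$. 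The remainder $R/(b-\fc)^2$ is uniformly integrable and converges by dominated convergence. For the linear part we use $\frac{y-y_\fc}{(b(y)-\fc)^2}=\frac{1}{b'(y_\fc)}\frac{1}{b-\fc}-\frac{i\fc_i}{b'(y_\fc)}\frac{1}{(b-\fc)^2}+O(1)$, where the $O(1)$ error comes from $b(y)-\fc_r=b'(y_\fc)(y-y_\fc)+O(|y-y_\fc|^2)$.

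\textbf{Step 2 (the singular terms).} By Sokhotski–Plemelj,
\begin{align*}
\int_{|y-y_\fc|\le1}\frac{dy}{b(y)-\fc}\to \mathrm{P.V.}\int\frac{dy}{b-\fc_r}+\frac{i\pi}{b'(y_\fc)}.
\end{align*}
The term $i\fc_i\int (b-\fc)^{-2}dy$ is explicit after the substitution $v=b(y)$: it equals $i\fc_i\big[-(v-\fc)^{-1}\big]$ evaluated at the endpoints, plus lower-order terms from the $y$-dependence of $b'$. It is therefore $O(\fc_i)\to0$. Since $F'(y_\fc,\fc)=\omega_{\text{in}}(y_\fc)$, the imaginary contribution is $\frac{\pi\omega_{\text{in}}(y_\fc)}{b'(y_\fc)^2}=\mathcal J_4$. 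Recombining the principal-value pieces with the dominated-convergence limits reassembles exactly $\mathcal J_3$ as the P.V. integral in \eqref{def-j_3-j_4}. This recombination uses $\phi_2(\cdot,\fc)\to1$ locally uniformly, which follows from \eqref{eq-phi2-est}.

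\textbf{Main obstacle.} The delicate point is obtaining uniform-in-$\fc_i$ control of $R$ and of $F'$, $F''$ near $y_\fc$, so that the Taylor expansion and dominated convergence are justified. Note that $y_\fc$ depends only on $\fc_r$, so for a fixed $\fc_r$ it does not move as $\fc_i\to0$. I would compute $F'=\omega_{\text{in}}\phi_1^{-1}-2F\phi_1'/\phi_1$ and $F''$ explicitly. Then I would bound them by $\|\omega_{\text{in}}\|_{H^2}$ (giving $\omega_{\text{in}}\in C^1$) together with \eqref{eq-phi1-est} and \eqref{eq-phi2-est}, in particular $|\phi_2'|\le C|\fc_i|$ and $\|\phi_2''\|_{L^\infty}\le C$.
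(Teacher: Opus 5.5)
Your proposal is correct and is essentially the paper's argument: dominated convergence on $|y-y_\fc|\ge1$ and on the regular remainder near $y_\fc$, with the $\pm i\mathcal J_4$ term coming from the boundary value of the $\omega_{\text{in}}(y_\fc)/(b-\fc)$-type singularity. The only difference is that the paper inserts the weight $b'(y)/b'(y_\fc)$, so that the singular piece $\int\frac{b'(y)(b(y)-\fc_r)}{(b(y)-\fc)^2}dy$ integrates explicitly to a logarithm plus an $O(\fc_i)$ term; this replaces your Plemelj step and your separate treatment of $i\fc_i\int(b-\fc)^{-2}dy$ (whose sign in your expansion should be $+$, since $b-\fc_r=(b-\fc)+i\fc_i$, though this is immaterial because the term vanishes in the limit).
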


\begin{proof}
  We decompose $\mathcal{J}^*(\omega_{\text{in}},\fc)$ as
  \begin{align*}
      \mathcal{J}^*(\omega_{\text{in}},\fc)
      &=\int_{y_{\fc}-1}^{y_{\fc}+1}\int_{y_{c}}^{y}\frac{\omega_{\text{in}}(y')}{\left(b(y)-\fc\right)^2}\left(\frac{\phi_1(y',\fc)}{\phi_1(y,\fc)^2}-\frac{b'(y)}{b'(y_{\fc})}\right)dy'dy\\
      &\quad +\int_{y_{\fc}-1}^{y_{\fc}+1}\frac{b'(y)}{b'(y_{\fc})\left(b(y)-\fc\right)^2}\left(\int_{y_{\fc}}^{y}\omega_{\text{in}}(y')dy'-\omega_{\text{in}}(y_{\fc})\frac{b(y)-\fc_r}{b'(y_{\fc})}\right)dy\\
      &\quad +\int_{y_{\fc}-1}^{y_{\fc}+1}\frac{\omega_{\text{in}}(y_{\fc})b'(y)(b(y)-\fc_r)}{b'(y_{\fc})^2\left(b(y)-\fc\right)^2}dy\\
      &\quad  +\int_{\mathbb{R}\setminus[y_{\fc}-1,y_{\fc}+1]}\frac{\int_{y_{c}}^{y}\omega_{\text{in}}(y')\phi_1(y',\fc)dy'}{\phi(y,\fc)^2}dy\\
      &=\mathcal{J}^*_{3,1}(\omega_{\text{in}},\fc)
      +\mathcal{J}^*_{3,2}(\omega_{\text{in}},\fc)
      +\mathcal{J}^*_{3,3}(\omega_{\text{in}},\fc)+\mathcal{J}^*_{3,4}(\omega_{\text{in}},\fc).
  \end{align*}
Note that $\left|y'-y_{\fc}\right|\le \left|y-y_{\fc}\right|$. By \eqref{eq-phi1-est} and \eqref{eq-phi2-est}, we obtain the following bounds
  \begin{align*}
      &\left|\frac{\phi_1(y',\fc)}{\phi_1(y,\fc)^2}-\frac{b'(y)}{b'(y_{\fc})}\right|\le \frac{b'(y)}{b'(y_{\fc})}\cdot\left( \left|\frac{\phi_1(y',\fc)-1}{\phi_1(y,\fc)^2}\right|+\left|\frac{\phi_1(y,\fc)^2-1}{\phi_1(y,\fc)^2}\right|\right)
      +\left|\frac{\phi_1(y',\fc)}{\phi_1(y,\fc)^2}\right|\cdot\left|\frac{b'(y)}{b'(y_{\fc})}-1\right|\\
      &\quad\qquad\qquad\qquad\qquad\lesssim \min\left\{\left|y-y_{\fc}\right|^2,1\right\}+e^{-C\left|y-y_{\fc}\right|}\left|y-y_{\fc}\right|,\\
      &\left|\int_{y_{c}}^{y}\omega_{\text{in}}(y')dy'\right|\le \left\|\omega_{\text{in}}(y)\right\|_{L^\infty}\left|y-y_{\fc}\right|\lesssim \left\|\omega_{\text{in}}(y)\right\|_{H^1}\left|y-y_{\fc}\right|,\\
      &\int_{y_{\fc}}^{y}\omega_{\text{in}}(y')dy'-\omega_{\text{in}}(y_{\fc})\frac{b(y)-\fc_r}{b'(y_{\fc})}
        \lesssim\left|y-y_{\fc}\right|^2.
  \end{align*}
  Then by the dominated convergence theorem, we have
  \begin{equation}\label{eq-limj31}
    \begin{aligned}
        \lim_{\fc_i\to 0}\mathcal{J}^*_{3,1}(\omega_{\text{in}},\fc)+\mathcal{J}^*_{3,2}(\omega_{\text{in}},\fc)+\mathcal{J}^*_{3,4}(\omega_{\text{in}},\fc)=\mathcal{J}^*_{3,1}(\omega_{\text{in}},\fc_r)+\mathcal{J}^*_{3,2}(\omega_{\text{in}},\fc_r)+\mathcal{J}^*_{3,4}(\omega_{\text{in}},\fc_r).
    \end{aligned}
  \end{equation}
For $\mathcal{J}^*_{3,3}(\omega_{\text{in}},\fc)$, we have
$\mathcal{J}^*_{3,3}(\omega_{\text{in}},\fc)=\frac{\omega_{\text{in}}(y_{\fc})}{b'(y_{\fc})^2}\left(\ln \frac{b(y_{\fc}+1)-\fc}{b(y_{\fc}-1)-\fc}
      -\frac{2i\fc_i}{b(y_{\fc}+1)^2-\fc^2}\right)$, with
  \begin{align*}
      \lim_{\fc_i\to 0\pm}\ln \frac{b(y_{\fc}+1)-\fc}{b(y_{\fc}-1)-\fc}=\ln \left|\frac{b(y_{\fc}+1)-\fc}{b(y_{\fc}-1)-\fc}\right|\pm i\pi,
  \end{align*}
  which gives
  \begin{align}\label{eq-limj33}
      \lim_{\fc_i\to 0\pm}\mathcal{J}^*_{3,3}(\omega_{\text{in}},\fc)
      &= \frac{\omega_{\text{in}}(y_{\fc})}{b'(y_{\fc})^2}\left(\ln \left|\frac{b(y_{\fc}+1)-\fc_r}{b(y_{\fc}-1)-\fc_r}\right|\pm  i \pi\right)\\
      &=\text{P.V.}\int_{y_{\fc}-1}^{y_{\fc}+1}\frac{\omega_{\text{in}}(y_{\fc})b'(y)}{b'(y_{\fc})^2\left(b(y)-\fc_r\right)}dy\pm i\mathcal J_4\left(\omega_{\text{in}},\fc_r\right).
  \end{align}
  Combining \eqref{eq-limj31} with \eqref{eq-limj33}, we conclude the proof.
\end{proof}
Next, we give the regularity estimates for $\mathcal{J}_3$ and $\mathcal{J}_4$, as defined in \eqref{def-j_3-j_4}. Similar estimates were previously obtained in \cite{LiZhao2024}.

\begin{proposition}\label{prop-j3}
  Let $\fc_r\in \mathbb{R}$, $g(y)\in H^2\left(\mathbb{R}\right)$. It holds that
  \begin{align*}
    \left\|\partial_{\fc_r}^n\mathcal{J}_3(g,\fc_r)\right\|_{L^2}+\left\|\partial_{\fc_r}^n\mathcal{J}_4(g,\fc_r)\right\|_{L^2}\lesssim \left\|g\right\|_{H^n},\quad n=0,1,2.
  \end{align*}
\end{proposition}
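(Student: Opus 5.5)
The $\mathcal J_4$ part is immediate. Since $\mathcal J_4(g,\fc_r)=\pi g(y_\fc)/b'(y_\fc)^2$ with $y_\fc=b^{-1}(\fc_r)$, differentiating $n$ times in $\fc_r$ produces a finite sum of terms $g^{(j)}(y_\fc)$, $j\le n$. Each is multiplied by smooth bounded coefficients built from $1/b'(y_\fc)$ and derivatives of $b'$ at $y_\fc$. These coefficients are controlled because $b''\in H^4$ and $c_m\le b'\le c_m^{-1}$. Changing variables $\fc_r=b(y)$, $d\fc_r=b'(y)\,dy$, gives $\|\partial_{\fc_r}^n\mathcal J_4\|_{L^2}\lesssim\|g\|_{H^n}$.

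For $\mathcal J_3$ I would pass to the variable $v=b(y)$, exactly as in \eqref{Hilbert}, and isolate a Hilbert transform. Write $\int_{y_\fc}^y g\phi_1\,dy'=\frac{g(y_\fc)}{b'(y_\fc)}(b(y)-\fc_r)+R(y,\fc_r)$, so that $R=O(|y-y_\fc|^2)$ near $y_\fc$. Then
\begin{align*}
\mathcal J_3(g,\fc_r)=\frac{g(y_\fc)}{b'(y_\fc)}\,\mathrm{P.V.}\int_{\mathbb R}\frac{dy}{(b(y)-\fc_r)\phi_1(y,\fc_r)^2}+\int_{\mathbb R}\frac{R(y,\fc_r)}{(b(y)-\fc_r)^2\phi_1(y,\fc_r)^2}\,dy .
\end{align*}
It is cleaner to split with a cutoff $\chi(y-y_\fc)$. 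The principal-value piece near the diagonal becomes, after $v=b(y)$, a Hilbert-type transform $\mathcal H$ applied to $g\circ b^{-1}$ times smooth weights, plus kernels that are bounded and exponentially decaying thanks to \eqref{eq-phi1-est}. The remaining pieces have kernels $K(y,\fc_r)$ satisfying $|K|\lesssim e^{-C_4|y-y_\fc|}$, and acting on $g$ or its integrals. For $n=0$, the $L^2$ boundedness of $\mathcal H$ and Schur's test for the integral kernels give $\|\mathcal J_3\|_{L^2}\lesssim\|g\|_{L^2}$. A Hardy-type inequality controls $\frac{1}{y-y_\fc}\int_{y_\fc}^y g$ by the maximal function of $g$.

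For the derivatives I would use the good derivative $\partial_G=\partial_{\fc_r}+\partial_y/b'(y_\fc)$, as in Proposition \ref{prop-j1}. Boundary terms at infinity vanish, as in \eqref{eq-inte-welldef}, so $\partial_{\fc_r}^n\mathcal J_3=\mathrm{P.V.}\int\partial_G^n(\cdots)\,dy$. The operator $\partial_G$ preserves the first-order vanishing of $b-\fc_r$ by \eqref{eq-pargkb1}–\eqref{eq-pargkb2}. Lemma \ref{lem-paryphi} controls $\partial_G^n\phi_1/\phi_1^2$ with the same decay and vanishing as $\phi_1^{-2}-1$. When $\partial_G$ hits $\int_{y_\fc}^y g\phi_1$, it produces $\frac{1}{b'(y_\fc)}g(y)\phi_1(y)+\int_{y_\fc}^y g\,\partial_G\phi_1$ minus the boundary term from moving $y_\fc$. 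Commuting $\partial_G$ past the integral turns it into an integral of $\partial_{y'}g$ against the same structure. Thus each $\partial_G$ costs one derivative of $g$ and no extra singularity, and the $n=0$ argument applies again with $g$ replaced by $g^{(j)}$, $j\le n$.

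The main obstacle is the bookkeeping that shows $\partial_G^n$ applied to the whole integrand keeps the singularity at exactly the principal-value order $(b-\fc_r)^{-1}$ and introduces nothing worse. This requires checking that the expansion $\int_{y_\fc}^y g\phi_1=\frac{g(y_\fc)}{b'(y_\fc)}(b-\fc_r)+O(|y-y_\fc|^2)$ commutes correctly with $\partial_G$. It also requires that terms like $g^{(j)}(y_\fc)$ are square-integrable in $\fc_r$ only up to $j\le n$. I would handle this by organizing terms via the Leibniz rule and Lemma \ref{lem-paryphi}. Following \cite{LiZhao2024}, I would then treat the near-diagonal part with the Hilbert transform and the far part with Schur's test.
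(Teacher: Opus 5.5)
Your tools (Hilbert transform near the diagonal, Hardy--Littlewood maximal function for $\frac{1}{y-y_\fc}\int_{y_\fc}^y g$, Young/Schur for exponentially decaying kernels, and the good derivative $\pa_G$) are the paper's, and your $\mathcal J_4$ argument is fine. The gap is the step that should produce the Hilbert transform in $\mathcal J_3$ for $n=0$.

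In your split $\mathcal J_3=A+B$, the piece $A=\frac{g(y_\fc)}{b'(y_\fc)}\mathrm{P.V.}\int\frac{dy}{(b-\fc_r)\phi_1^2}$ is not a Hilbert transform of $g$; it is only $g\circ b^{-1}$ times a bounded multiplier. The bound $R=O(|y-y_\fc|^2)$ needs $g$ Lipschitz, with a constant in $\|g'\|_{L^\infty}$, not $\|g\|_{L^2}$. Viewed as an operator on $g$, $B$ keeps the non-integrable kernel $\frac{1}{y-y_\fc}$, so Schur's test cannot bound it. Writing $\frac{1}{(b-\fc_r)^2}=-\frac{1}{b'}\pa_y\frac{1}{b-\fc_r}$ and integrating by parts (boundary terms vanish) gives
\begin{align*}
B=\mathrm{P.V.}\int_{\mathbb R}\frac{g(y)\,dy}{(b(y)-\fc_r)\,b'(y)\,\phi_1(y,\fc_r)}-A+\int_{\mathbb R}\frac{R(y,\fc_r)}{b(y)-\fc_r}\,\pa_y\Big(\frac{1}{b'(y)\phi_1(y,\fc_r)^2}\Big)dy .
\end{align*}
So the whole singular operator lives in $B$, and $A$ simply cancels.

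The missing idea is this integration by parts in $y$. The paper does it directly on $\mathcal J_3$ via $\frac{1}{(y-y_\fc)^2}=-\pa_y\frac{1}{y-y_\fc}$, moving the derivative onto $\int_{y_\fc}^y g\phi_1$. This leaves two kinds of terms:
\begin{itemize}
\item $\mathrm{P.V.}\int\frac{g(y)}{y-y_\fc}\frac{(y-y_\fc)^2}{\phi_1(b-\fc_r)^2}dy$, a genuine Hilbert transform once the constant $b'(y_\fc)^{-2}$ is subtracted near the diagonal, with the difference and the far part handled by Young;
\item terms $\int\frac{1}{y-y_\fc}\big(\int_{y_\fc}^y g\phi_1\big)K\,dy$ with $|K|\lesssim e^{-C_4|y-y_\fc|}$, handled by the maximal function.
\end{itemize}
No Taylor subtraction is needed.

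The same issue affects $n=1,2$. The clean identity is $\pa_G\int_{y_\fc}^yF\,dy'=\int_{y_\fc}^y\pa_{G'}F\,dy'$; your intermediate formula should contain $\int_{y_\fc}^y g\,\pa_{\fc_r}\phi_1$, not $\int_{y_\fc}^y g\,\pa_G\phi_1$. The term of $\pa_{\fc_r}^n\mathcal J_3$ where every $\pa_G$ falls on $g$ is $b'(y_\fc)^{-n}\mathcal J_3(g^{(n)},\fc_r)$, and $g^{(n)}$ is only in $L^2$. For this term the expansion $\int_{y_\fc}^y g\phi_1=\frac{g(y_\fc)}{b'(y_\fc)}(b-\fc_r)+O(|y-y_\fc|^2)$ is unavailable, so the obstacle you identify (commuting that expansion with $\pa_G$) is not the real one. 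What is needed is the $L^2\to L^2$ bound for $g\mapsto\mathcal J_3(g,\cdot)$ from the integration by parts above, applied to each $g^{(j)}$, $j\le n$. The paper's $III_1$ runs the same argument with the kernel $\pa_G^{n-i}\big((b-\fc_r)^{-2}\big)$, of size $|y-y_\fc|^{-2}$ by \eqref{eq-pargkb1}. The terms where $\pa_G$ lands on $\phi_1$ ($III_2$, $III_3$) are then bounded by the maximal function via Lemma \ref{lem-paryphi}.
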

\begin{proof}
  For $\mathcal J_4\left(\omega_{\text{in}},\fc_r\right)=\frac{\pi\omega_{\text{in}}(y_{\fc})}{b'(y_{\fc})^2}$,  the assumption $b''(y)\in H^4$ immediately yields
  \begin{align*}
    \left\|\partial_{\fc_r}^n\mathcal{J}_4(g,\fc_r)\right\|_{L^2}\lesssim \left\|g\right\|_{H^n},\quad n=0,1,2.
  \end{align*}
To handle $\mathcal J_3\left(\omega_{\text{in}},\fc_r\right)=\text{P.V.}\int_{\mathbb{R}}\frac{\int_{y_{\fc}}^{y}\omega_{\text{in}}(y')\phi_1(y',\fc_r)dy'}{\left(b(y)-\fc_r\right)^2\phi_1(y,\fc_r)^2}dy$, we notice that the L'Hopital's rule gives
  \begin{align*}
    \left|\lim_{y\to y_\fc}\frac{1}{\left(y-y_\fc\right)}\int_{y_{\fc}}^{y}g(y')\phi_1(y',\fc_r)dy'\right|=\left|g(y_\fc)\right|\le\left\|g\right\|_{H^1}.
  \end{align*}
  Using the limit obtained above and integrating by parts,  we decompose $\mathcal{J}_3(g,\fc_r)$ as
  \begin{align*}
    \mathcal{J}_3(g,\fc_r)&=\text{P.V.}\int_\mathbb{R} \int_{y_{\fc}}^{y}g(y')\phi_1(y',\fc_r)dy'\frac{1}{\phi_1^2(y,\fc_r)}
    \frac{\left(y-y_\fc\right)^2}{\left(b(y)-\fc_r\right)^2}\frac{1}{\left(y-y_\fc\right)^2}dy\\
    &=\text{P.V.}\int_\mathbb{R} g(y)\phi_1(y,\fc_r)\frac{1}{\phi_1^2(y,\fc_r)}
    \frac{\left(y-y_\fc\right)^2}{\left(b(y)-\fc_r\right)^2}\frac{1}{y-y_\fc}dy\\
    &\quad +\text{P.V.}\int_\mathbb{R} \int_{y_{\fc}}^{y}g(y')\phi_1(y',\fc_r)dy'\partial_y\left(\frac{1}{\phi_1^2(y,\fc_r)}
    \frac{\left(y-y_\fc\right)^2}{\left(b(y)-\fc_r\right)^2}\right)\frac{1}{y-y_\fc}dy\\
    &=\text{P.V.}\int_{y_\fc-1}^{y_\fc+1}\frac{1}{b'(y_\fc)^2} \frac{g(y)}{y-y_\fc}dy\\
    &\quad+\text{P.V.}\int_{y_\fc-1}^{y_\fc+1} \frac{g(y)}{y-y_\fc}\left(\frac{1}{\phi_1(y,\fc_r)}
    \frac{\left(y-y_\fc\right)^2}{\left(b(y)-\fc_r\right)^2}-\frac{1}{b'(y_\fc)^2}\right)dy\\
    &\quad +\text{P.V.}\int_{\mathbb{R}\setminus [y_\fc-1,y_\fc+1]}\frac{g(y)}{\phi_1(y,\fc_r)}
    \frac{\left(y-y_\fc\right)}{\left(b(y)-\fc_r\right)^2}dy\\
    &\quad -\text{P.V.}\int_\mathbb{R} \frac{\int_{y_{\fc}}^{y}g(y')\phi_1(y',\fc_r)dy'}{y-y_\fc}\frac{\partial_y\phi_1(y,\fc_r)}{\phi_1^3(y,\fc_r)}
    \frac{\left(y-y_\fc\right)^2}{\left(b(y)-\fc_r\right)^2}dy\\
    &\quad +\text{P.V.}\int_\mathbb{R} \frac{\int_{y_{\fc}}^{y}g(y')\phi_1(y',\fc_r)dy'}{y-y_\fc}\frac{1}{\phi_1^2(y,\fc_r)}
    \partial_y\left(\frac{\left(y-y_\fc\right)^2}{\left(b(y)-\fc_r\right)^2}\right)dy\\
    &=II_1+II_2+II_3+II_4+II_5.
  \end{align*}
  Applying \eqref{eq-phi1-est}, we have the bounds
  \begin{align*}
    &\left|\frac{1}{\phi_1(y,\fc_r)}
    \frac{\left(y-y_\fc\right)^2}{\left(b(y)-\fc_r\right)^2}-\frac{1}{b'(y_\fc)^2}\right|\lesssim \left|y-y_\fc\right|,\quad \left|\frac{1}{\phi_1(y,\fc_r)}
    \frac{\left(y-y_\fc\right)}{\left(b(y)-\fc_r\right)^2}\right|\lesssim \frac{1}{\left|y-y_\fc\right|e^{C_4\left|y-y_\fc\right|}},\\
    &\left|\frac{\partial_y\phi_1(y,\fc_r)}{\phi_1^3(y,\fc_r)}
    \frac{\left(y-y_\fc\right)^2}{\left(b(y)-\fc_r\right)^2}\right|
    +\left|\frac{1}{\phi_1^2(y,\fc_r)}
    \partial_y\frac{\left(y-y_\fc\right)^2}{\left(b(y)-\fc_r\right)^2}\right|\lesssim \frac{1}{\phi_1(y,\fc_r)e^{C_4\left|y-y_\fc\right|}}.
  \end{align*}

Therefore, applying the boundedness of the Hilbert transform to \(II_1\), Young's convolution inequality to \(II_2\) and \(II_3\), and the Hardy–Littlewood maximal inequality to \(II_4\) and \(II_5\), we obtain
    \begin{align*}
      \left\|\mathcal{J}_3(g,\fc_r)\right\|_{L^2_{\fc_r}}&\lesssim \left\|\mathcal{H}(g)(\fc_r)-\int_{\left|y-y_\fc\right|>1}\frac{g(y)}{y-y_\fc}dy\right\|_{L^2_{\fc_r}}
      + \left\|g(y)\right\|_{L^2_{y}}+\left\|\sup_{y\in \mathbb{R}}\frac{\int_{b^{-1}\left(\fc_r\right)}^{y}\left|g(y')\right|dy'}{y-b^{-1}\left(\fc_r\right)}\right\|_{L^2_{\fc_r}}\\
      &\lesssim \left\|g(y)\right\|_{L^2_{y}}.
    \end{align*}
We obtain for $n=1,2$
\begin{align}\label{eq-newt} &\partial_{\fc_r}^n\mathcal{J}_3(g,\fc_r)=\text{P.V.}
    \int_{\mathbb{R}}
    \partial_{G}^n\left(\frac{\int_{y_{\fc}}^{y}
    \phi_1(y',\fc_r)g(y')dy'}{\left(b(y)-\fc_r\right)^2
    \phi_1(y,\fc_r)^2}\right)dy,\end{align}
and decompose  
\begin{equation}\label{eq-decom-parn}
  \begin{aligned}
      &\text{P.V.}\int_{\mathbb{R}}\partial_{G}^n\left(\frac{\int_{y_{\fc}}^{y}\phi_1(y',\fc_r)g(y')dy'}{\left(b(y)-\fc_r\right)^2\phi_1(y,\fc_r)^2}\right)dy\\
      &\ =\text{P.V.}\int_{\mathbb{R}}\sum_{i=0}^{n}C_n^i  \int_{y_{\fc}}^{y}\phi_1(y',\fc_r)\partial_{y'}^ig(y')dy'\cdot\partial_{G}^{n-i}\left(\frac{1}{\left(b(y)-\fc_r\right)^2}\right)\cdot\frac{1}{\phi_1(y,\fc_r)^2}dy\\
      &\quad +\int_{\mathbb{R}}\sum_{i=0}^{n}C_n^i  \int_{y_{\fc}}^{y}\phi_1(y',\fc_r)\partial_{y'}^ig(y')dy'\cdot\sum_{j=0}^{n-i-1}C_{n-i}^j\partial_{G}^{j}\left(\frac{1}{\left(b(y)-\fc_r\right)^2}\right)\cdot\partial_{G}^{n-i-j}\left(\frac{1}{\phi_1(y,\fc_r)^2}\right)dy\\
      &\quad +\int_{\mathbb{R}}\sum_{i=0}^{n}C_n^i  \int_{y_{\fc}}^{y}\sum_{l=1}^{i}C_i^l\partial_{G'}^l\phi_1(y',\fc_r)\left(\frac{\partial_{y'}}{b'(y_\fc)}\right)^{i-l}g(y')dy'\cdot\partial_{G}^{n-i}\left(\frac{1}{\left(b(y)-\fc_r\right)^2\phi_1(y,\fc_r)^2}\right)dy\\
      &\ =III_1+III_2+III_3.
  \end{aligned}
\end{equation}
 For $III_1$, \eqref{eq-pargkb1} gives $\left|\partial_{G}^{n}\Big(\frac{1}{\left(b(y)-\fc_r\right)^2}\Big)\right|\lesssim \frac{1}{\left(y-y_{\fc}\right)^2}$. Using the same  arguments for $\left\|\mathcal{J}_3(g,\fc_r)\right\|_{L^2}$, we obtain
\begin{equation}\label{eq-iii1}
  \begin{aligned}
    \left\|III_1\right\|_{L^2_{\fc_r}}\lesssim \sum_{i=0}^{n}\left\|\partial_{y}^ig\right\|_{L^2}.
  \end{aligned}
\end{equation}
By  Lemma \ref{lem-paryphi}, we have
\begin{align*}
  \left|\partial_{G}^{n}\left(\frac{1}{\phi_1(y,\fc_r)^2}\right)\right|\lesssim \left|\frac{1}{\phi_1(y,\fc_r)^2}\frac{\partial_{G}^{n}\phi_1(y,\fc_r)}{\phi_1(y,\fc_r)}\right|\lesssim \frac{\left|y-y_{\fc}\right|^3 e^{-C_4\left|y-y_{\fc}\right|}}{\phi_1(y,\fc_r)},
\end{align*}
which along with \eqref{eq-pargkb1} implies
\begin{equation}\label{eq-iii2}
  \begin{aligned}
    \left|III_2\right|\lesssim \int_{\mathbb{R}}\sum_{i=0}^{n}\frac{\int_{y_{\fc}}^y\left|\partial_{y'}^ig(y')\right|dy'}{y-y_{\fc}}\cdot \left|y-y_{\fc}\right|^2 e^{-C_4\left|y-y_{\fc}\right|}dy
    \lesssim \sum_{i=0}^{n}\sup_{y\in\mathbb{R}}\frac{\int_{y_{\fc}}^y\left|\partial_{y'}^ig(y')\right|dy'}{y-y_{\fc}}.
  \end{aligned}
\end{equation}
For $III_3$,  similarly, applying $\left|\partial^n_{G}\phi_1(y,\fc_r)\right|\le \left|y-y_\fc\right|^3\phi_1(y,\fc_r)$ from Lemma \ref{lem-paryphi}, we obtain
\begin{equation}\label{eq-iii3}
  \begin{aligned}
    \left|III_3\right|&\lesssim \sum_{i=0}^{n}\sum_{l=1}^{i}\int_{\mathbb{R}} \left|\int_{y_{\fc}}^{y}\left|y'-y_{\fc}\right|^3\phi_1(y',\fc_r)\left|\partial_{y'}^{i-l}g(y')\right|dy'\right|\cdot\left|\partial_{G}^{n-i}\left(\frac{1}{\left(b(y)-\fc_r\right)^2\phi_1(y,\fc_r)^2}\right)\right|dy \\
    &\lesssim \sum_{i=0}^{n}\int_{\mathbb{R}}\frac{\int_{y_{\fc}}^y\left|\partial_{y'}^ig(y')\right|dy'}{y-y_{\fc}}\frac{\left|y-y_{\fc}\right|}{e^{C_4\left|y-y_{\fc}\right|}}dy
    \lesssim \sum_{i=0}^{n}\sup_{y\in\mathbb{R}}\frac{\int_{y_{\fc}}^y\left|\partial_{y'}^ig(y')\right|dy'}{y-y_{\fc}}.
  \end{aligned}
\end{equation}
Therefore, combining estimates \eqref{eq-newt}-\eqref{eq-iii3} with the Hardy-Littlewood maximal inequality, we obtain the desired bound, completing the proof of the proposition.
\end{proof}

\subsubsection{Limit of $\fc\Phi(y,\fc)$}
Now we are in a position to give the limit of $\fc\Phi(y,\fc)$ at the origin.
\begin{proposition}[Limiting absorption principle]\label{prop:LAP}
  Let $b(y)$ satisfy Assumption~\ref{assum}. If we further assume that $\partial_{\fc_r}\mathcal{J}_1(0)+ i\partial_{\fc_r}\mathcal{J}_2(0)\neq 0$, then
   \begin{align*}
    \lim_{\pm \fc_i> 0,\left|\fc\right|\to 0}\fc\Phi(y,\fc)
    =-\frac{\mathcal{J}_3(\omega_{\text{in}},0)\pm i\mathcal{J}_4(\omega_{\text{in}},0)}{\partial_{\fc_r}\mathcal{J}_1(0)\mp i\partial_{\fc_r}\mathcal{J}_2(0)}\cdot i\Gamma(y,0),
   \end{align*}
   where $\Gamma(y,0)\in H^2_y$ is the eigenfunction (in the sense of stream function) of $\mathcal{R}$ associated to the embedded eigenvalue $\fc_*=0$.
\end{proposition}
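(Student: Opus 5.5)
Starting from the decomposition \eqref{eq-Phi-exp},
\begin{align*}
\fc\Phi(y,\fc)=\fc\Phi_{i,l}(y,\fc)-\fc\,\mu(\omega_{\text{in}},\fc)\Phi_{h,l}(y,\fc),\qquad \fc\,\mu(\omega_{\text{in}},\fc)=\frac{J^*(\omega_{\text{in}},\fc)}{W(\fc)/\fc},
\end{align*}
we take the limit of each factor separately. We work at fixed $y$ and use the left representation for $y$ below $y_\fc$ and the right one above; since both representations coincide, either may be used.

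For $\fc\Phi_{i,l}$, we check that $\Phi_{i,l}(y,\fc)$ remains bounded as $\fc\to 0$ with $\pm\fc_i>0$. The inner integrand is bounded by $|\omega_{\text{in}}|\,|y'-y_\fc|/|\phi|^2$. By \eqref{eq-phi-est} this is $\lesssim (|y'-y_\fc|+|\fc_i|)^{-1}$ near $y_\fc$, so the integral is at worst logarithmic in $|\fc_i|$. Hence $\fc\Phi_{i,l}\to 0$, because $|\fc|\log|\fc_i|$ is bounded by $|\fc|\log|\fc|$ along approaches with $|\fc_i|\ge$ a power of $|\fc|$. For general approach directions we instead use the finer cancellation from the principal-value structure, exactly as in Lemma \ref{lem-j_3}, which shows $\Phi_{i,l}$ has a finite limit.

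The coefficient is handled by the preceding two lemmas. The numerator satisfies $J^*(\omega_{\text{in}},\fc)\to \mathcal J_3(\omega_{\text{in}},0)\pm i\mathcal J_4(\omega_{\text{in}},0)$ by Lemma \ref{lem-j_3}. This needs a small upgrade from $\fc_i\to0$ at fixed $\fc_r$ to joint convergence as $\fc\to 0$, which follows from the same dominated-convergence decomposition together with continuity of $\mathcal J_3,\mathcal J_4$ in $\fc_r$ (Proposition \ref{prop-j3}). The denominator satisfies $W(\fc)/\fc\to \partial_{\fc_r}\mathcal J_1(0)\mp i\partial_{\fc_r}\mathcal J_2(0)$, which is nonzero by hypothesis. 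Hence $\fc\mu$ converges to the stated quotient.

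Next we identify the limit of $\Phi_{h,l}(y,\fc)=i\varphi^-(y,\fc)$. Since $W(0)=0$, the limits of $\varphi^-$ and $\varphi^+$ at $\fc=0$ are proportional. Their common limit is a decaying solution of \eqref{eq-Rayleigh1} with $\fc=0$; because $b''(0)=0$, it is regular across $y_\fc=0$. This limit is the eigenfunction $\Gamma(y,0)$, normalized as $\lim\varphi^\pm$. Assembling the pieces gives
\begin{align*}
\lim \fc\Phi=-\frac{\mathcal J_3\pm i\mathcal J_4}{\partial_{\fc_r}\mathcal J_1(0)\mp i\partial_{\fc_r}\mathcal J_2(0)}\, i\Gamma(y,0).
\end{align*}

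The main obstacle is the joint limit of $\varphi^\pm(y,\fc)$ for $y$ on the far side of $y_\fc$. There $\varphi^-(y,\fc)=\phi\int_{-\infty}^y\phi^{-2}$ crosses the critical layer, and its limit carries the $\mp i\pi$ jump. We would handle this by writing, for $y>y_\fc$,
\begin{align*}
\varphi^-=W(\fc)\phi-\varphi^+ .
\end{align*}
Since $W(\fc)\to 0$ and $\phi$ is bounded, $\varphi^-\to-\varphi^+(y,0)$. Thus the limit is the single $H^2$ function $\Gamma$ glued as in the introduction, with sign conventions to be checked. Verifying $\Gamma\in H^2$ uses $\mathcal J_1(0)=\mathcal J_2(0)=0$ through Lemma \ref{lem-iff-emb}.
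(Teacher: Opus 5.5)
Your proposal is correct and follows the paper's proof. It uses the same decomposition \eqref{eq-Phi-exp}, shows $\fc\Phi_{i,l}\to0$, obtains $\Phi_{h,l}\to i\Gamma(y,0)$ by dominated convergence, and gets $\fc\mu\to$ the stated quotient from Lemma \ref{lem-j_3} and the Wronskian limit, treating $y>0$ with the right representation. Your remark that Lemma \ref{lem-j_3} needs upgrading to a joint limit in $\fc$ is a fair point the paper passes over.

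The logarithmic critical-layer estimate and the identity $\varphi^-=W\phi-\varphi^+$ are unnecessary. For fixed $y<0$ (resp.\ $y>0$), the integration ranges in the left (resp.\ right) representation stay a fixed distance from $y_\fc\to0$, so $\Phi_{i,l}$ (resp.\ $\Phi_{i,r}$) is bounded outright.
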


\begin{proof}
  To begin with, we consider the case $y<0$. For the term $\fc\Phi_{i,l}(y,\fc)$, we first estimate
  \begin{align*}
      \left|\Phi_{i,l}(y,\fc)\right|&=\left|\phi(y,\fc)\int_{-\infty}^{y}\frac{\int_{y_{c}}^{y'}\omega_{\text{in}}(y'')\phi_1(y'',\fc_r)\phi_2(y'',\fc)dy''}{\phi(y',\fc)^2}dy\right|\\
      &\le \left\|\omega_{\text{in}}(y)\right\|_{L^\infty}\left|\phi_1(y,\fc)\left(b(y)-\fc_r\right)\right|\int_{-\infty}^{y}\frac{1}{\left(\fc_r-b(y')\right)\phi_1(y',\fc_r)}dy'.
  \end{align*}
  Then we have
  \begin{equation}\label{eq-phiil}
    \begin{aligned}
        \lim_{\pm\fc_i> 0,\;\fc\to 0}\fc\Phi_{i,l}(y,\fc) =0.
    \end{aligned}
  \end{equation}

  Recall that if $\psi(y,\fc)\in H^2_{loc}$ is a solution of \eqref{eq-Rayleigh1}, then $\psi(y,\fc)$ has the  form of \eqref{eq-psi-eigenfunction}
\begin{align*}
  \psi(y,\fc)=a_1^-\phi(y,\fc)+a_2^-\varphi^-(y,\fc)=a_1^+\phi(y,\fc)+a_2^+\varphi^+(y,\fc),
\end{align*}
with
\begin{align*}
  \varphi^-(y,\fc)=\phi(y,\fc)\int_{-\infty}^{y}\frac{1}{\phi(y',\fc)^2}dy',\quad
  \varphi^+(y,\fc)=\phi(y,\fc)\int_{y}^{+\infty}\frac{1}{\phi(y',\fc)^2}dy',
\end{align*}
where $\fc_i\neq 0$ and $a_1^\pm$, $a_2^\pm$ are constants.
As \(y\to -\infty\) (or \(y\to +\infty\)), the function \(\varphi^-(y,\fc)\) (or \(\varphi^+(y,\fc)\)) and \(\partial_y\varphi^-(y,\fc)\) (or \(\partial_y\varphi^+(y,\fc)\)) decay exponentially, whereas \(\phi(y,\fc)\) grows exponentially.
Under the hypothesis that \(\mathcal{R}=b(y)-b''(y)\left(\partial_y^2-1\right)^{-1}\) possesses an embedded eigenvalue at \(0\),
the functions \(\varphi^\pm(y,0)\) are non-singular at \(y=0\) and
provide continuous extensions of \(\varphi^\pm(y,\fc)\) as $\fc\to 0$.
Hence the corresponding eigenfunction must satisfy
\begin{align*}
  \Gamma(y,0)=\left\{\begin{aligned}
  &\phi(y,0)\int_{-\infty}^y\frac{1}{\phi^2(y',0)}dy',\quad y<0,\\
  &\phi(y,0)\int_{+\infty}^y\frac{1}{\phi^2(y',0)}dy',\quad y>0,
  \end{aligned}\right.
  \end{align*}
  and $\Gamma(y,0)\in H^2_y$.
For a detailed discussion of this point, see \eqref{parvarphi} and the subsequent analysis, as well as the proofs of Lemma 5.4 and Lemma 5.6 in \cite{LiMasmoudiZhao2022critical}.

  An application of the dominated convergence theorem gives
  \begin{equation}\label{eq-phihl}
    \begin{aligned}
        \lim_{\pm\fc_i> 0,\left|\fc\right|\to 0}\Phi_{h,l}(y,\fc)
        =\lim_{\pm\fc_i> 0,\left|\fc\right|\to 0}i\phi(y,\fc)\int_{-\infty}^{y}\frac{1}{\phi(y',\fc)^2}dy'
        =i\Gamma(y,0).
    \end{aligned}
  \end{equation}
For $\mu\left(\omega_{\text{in}},\fc\right)$, applying Lemma \ref{lem-j_3} and Lemma \ref{lem-wronskian}, we have
  \begin{equation}\label{eq-fcmu}
    \begin{aligned}
        \lim_{\pm\fc_i> 0,\left|\fc\right|\to 0}\fc\mu\left(\omega_{\text{in}},\fc\right)
        =\lim_{\pm\fc_i> 0,\left|\fc\right|\to 0}\frac{\mathcal{J}^*(\omega_{\text{in}},\fc)}{\frac{W(\fc)}{\fc}}
        =\frac{\mathcal{J}_3(\omega_{\text{in}},0)\pm i\mathcal{J}_4(\omega_{\text{in}},0)}{\partial_{\fc_r}\mathcal{J}_1(0)\mp i\partial_{\fc_r}\mathcal{J}_2(0)}.
    \end{aligned}
  \end{equation}
  Thus, combining \eqref{eq-phiil}, \eqref{eq-phihl} and \eqref{eq-fcmu}, we conclude that for $y<y_{\fc}$,
  \begin{align*}
      \lim_{\pm \fc_i> 0,\left|\fc\right|\to 0}\fc\Phi(y,\fc)=-\frac{\mathcal{J}_3(\omega_{\text{in}},0)\pm i\mathcal{J}_4(\omega_{\text{in}},0)}{\partial_{\fc_r}\mathcal{J}_1(0)\mp i\partial_{\fc_r}\mathcal{J}_2(0)}\cdot i\Gamma(y,0).
  \end{align*}
  The case for $y>0$ can be treated similarly. This ends the proof.
\end{proof}

\subsection{Representation formula of the stream function}
The aim of this subsection is to derive the representation formula for the stream function $\Psi(t,y)$ in \eqref{eq-rep-Psi}.
We first establish some necessary preliminaries.
We define
\begin{align*}
  \Phi_{\pm}(y,\fc_r)=
  \left\{
\begin{aligned}
&i\phi(y,\fc_r)\int_{-\infty}^y\frac{\int_{y_{\fc}}^{y'}\omega_{\text{in}}(y'')\phi_1(y'',\fc_r)dy''}{\phi^2(y',\fc_r)}dy'
 -i\mu_{\pm}(\omega_{\text{in}},\fc_r)\Gamma(y,\fc_r),\quad y<y_\fc,\\
&i\phi(y,\fc_r)\int_{+\infty}^y\frac{\int_{y_{\fc}}^{y'}\omega_{\text{in}}(y'')\phi_1(y'',\fc_r)dy''}{\phi^2(y',\fc_r)}dy'
-i\mu_{\pm}(\omega_{\text{in}},\fc_r)\Gamma(y,\fc_r),\quad y<y_\fc,
\end{aligned}
\right.
\end{align*}
where
\begin{align*}
  \Gamma(y,\fc_r)=\left\{\begin{aligned}
  &\phi(y,\fc_r)\int_{-\infty}^y\frac{1}{\phi^2(y',\fc_r)}dy',\quad y<y_\fc,\\
  &\phi(y,\fc_r)\int_{+\infty}^y\frac{1}{\phi^2(y',\fc_r)}dy',\quad y>y_\fc,
  \end{aligned}\right.
\end{align*}
and
\begin{align*}
  \mu_{\pm}(\omega_{\text{in}},\fc_r)=\frac{\mathcal J_3(\omega_{\text{in}},\fc_r)\pm i\mathcal J_4(\omega_{\text{in}},\fc_r)}{\mathcal J_1(\fc_r)\mp i\mathcal J_2(\fc_r)},
\end{align*}
with $\mathcal J_1(\fc_r)$, $\mathcal J_2(\fc_r)$ given in Lemma \ref{lem-lim-eigen}, and
 $\mathcal J_3(\omega_{\text{in}},\fc_r)$, $\mathcal J_4(\omega_{\text{in}},\fc_r)$ given in Lemma \ref{lem-j_3}.
Refer to \cite{LiMasmoudiZhao2022critical} Lemma 5.12, we have for $\fc_r\neq 0$
\begin{equation}\label{eq-limphipm}
  \begin{aligned}
    \lim_{\fc_i\to 0\pm}\Phi(y,\fc_r+i\fc_i)&=\Phi_{\pm}(y,\fc_r).
  \end{aligned}
\end{equation}

\begin{proposition}[Representation formula]\label{prop:RF}
  Under the same assumption to Proposition \ref{prop:LAP}, for $\omega_{\text{in}}(y)\in H^2$, we have
  \begin{align*}
      \Psi(t,y)&=-\frac{1}{\pi}\text{P.V.}\int_{-\infty}^{+\infty} e^{-i\fc_r t} \frac{\mathcal J_1\mathcal J_4+\mathcal J_2\mathcal J_3}{\mathcal J_1^2+\mathcal J_2^2} \Gamma(y,\fc_r) d\fc_r\\
      &\quad +\frac{\Gamma(y,0)}{\left(\partial_{\fc_r}\mathcal{J}_1(0)\right)^2+\left(\partial_{\fc_r}\mathcal{J}_2(0)\right)^2}
      \cdot\left(\partial_{\fc_r}\mathcal{J}_1(0)\mathcal{J}_3(\omega_{\text{in}},0)-\partial_{\fc_r}\mathcal{J}_2(0)\mathcal{J}_4(\omega_{\text{in}},0)\right).
  \end{align*}
\end{proposition}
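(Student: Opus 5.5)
The plan is to start from the Laplace-transform formula \eqref{eq-rep-Psi}, substitute $(\fc-\mathcal L)^{-1}\Psi_{\text{in}}=i\Phi(y,\fc)$, and push the two horizontal contours $\fc_i=\pm c^\diamond$ down to the real axis. This gives
\begin{align*}
\Psi(t,y)=\lim_{T\to\infty}\frac{1}{2\pi}\int_{-T}^{T}\Big[e^{-i(\fc_r-ic^\diamond)t}\Phi(y,\fc_r-ic^\diamond)-e^{-i(\fc_r+ic^\diamond)t}\Phi(y,\fc_r+ic^\diamond)\Big]d\fc_r .
\end{align*}

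\emph{Contour deformation.} By Assumption \ref{assum} there are no eigenvalues off the real axis, so $\Phi(y,\fc)$ is analytic in $0<|\fc_i|\le c^\diamond$. I would apply Cauchy's theorem on the rectangles $\{|\fc_r|\le T,\ \delta'\le \pm\fc_i\le c^\diamond\}$. The vertical sides should vanish as $T\to\infty$, using the resolvent bound $|\Phi|\lesssim 1/|\fc|$ for large $|\fc_r|$; this comes from the explicit formula \eqref{eq-Phi-exp} together with the lower bound $\mathcal J_1^2+\mathcal J_2^2\ge C_l$ of Proposition \ref{prop-lowerbound} away from $0$.

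\emph{Passing to the real axis.} I then let $\delta'\to0$. On $|\fc_r|\ge\delta$ I use the boundary limits \eqref{eq-limphipm}. Near $\fc=0$ the integrand blows up like $1/\fc$, so I excise half-disks of radius $\delta$ around the origin in each half plane. This leaves two pieces.

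\emph{Real-axis piece.} On $|\fc_r|>\delta$ the integrand is $e^{-i\fc_r t}(\Phi_--\Phi_+)(y,\fc_r)$. The inhomogeneous parts of $\Phi_\pm$ coincide, so $\Phi_--\Phi_+=i(\mu_+-\mu_-)\Gamma$. A direct computation gives
\begin{align*}
\mu_+-\mu_-=\frac{2i(\mathcal J_1\mathcal J_4+\mathcal J_2\mathcal J_3)}{\mathcal J_1^2+\mathcal J_2^2},
\end{align*}
which produces $-\frac1\pi\int_{|\fc_r|>\delta}e^{-i\fc_r t}\frac{\mathcal J_1\mathcal J_4+\mathcal J_2\mathcal J_3}{\mathcal J_1^2+\mathcal J_2^2}\Gamma\,d\fc_r$. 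As $\delta\to0$ this becomes the principal value integral. Its existence near $0$ follows from Proposition \ref{prop-lowerbound}, since $\mathcal J_1^2+\mathcal J_2^2\gtrsim \fc_r^2$ and the numerator is $C^1$ by Propositions \ref{prop-j1} and \ref{prop-j3}, so the integrand is $\sim 1/\fc_r$ plus an integrable remainder. Integrability at infinity (in the $L^2_y$, $T\to\infty$ sense) follows from $\mathcal J_3,\mathcal J_4\in L^2_{\fc_r}$ and the uniform lower bound on $\mathcal J_1^2+\mathcal J_2^2$.

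\emph{Semicircle piece.} On the small semicircles I write $\fc\Phi(y,\fc)=L_\pm(y)+o(1)$, where $L_\pm$ is the limit from Proposition \ref{prop:LAP}, and $e^{-i\fc t}=1+O(\delta t)$. Each semicircle then contributes $L_\pm\int d\fc/\fc$, which equals $\pm i\pi L_\pm$ up to orientation, plus an error that is $o(1)$ as $\delta\to0$. Summing the two with the prefactor, and writing $a=\pa_{\fc_r}\mathcal J_1(0)$, $b=\pa_{\fc_r}\mathcal J_2(0)$, one uses
\begin{align*}
\frac{\mathcal J_3+i\mathcal J_4}{a-ib}+\frac{\mathcal J_3-i\mathcal J_4}{a+ib}=\frac{2(a\mathcal J_3-b\mathcal J_4)}{a^2+b^2}.
\end{align*}
Multiplied by $\Gamma(y,0)$, this yields exactly the second term of the formula. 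I would fix signs by checking the orientation carefully.

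\emph{Main obstacle.} The hard step is justifying the passage $\fc_i\to0$ uniformly, so that dominated convergence applies on $|\fc_r|\ge\delta$ and the $o(1)$ in $\fc\Phi$ is uniform on the semicircle. For this I would rely on the uniform-in-$\fc$ estimates of Proposition \ref{prop-phi}, the continuity of $\mathcal W$ and its derivatives up to the axis from the analytic-Wronskian argument, and the convergence in Lemma \ref{lem-j_3}, which needs $\omega_{\text{in}}\in H^2$. Uniformity of the limit in Proposition \ref{prop:LAP} must be extracted from its proof rather than just its statement. If pointwise control in $y$ is delicate, I would carry out the whole argument in $L^2_y$ after pairing with a test function.
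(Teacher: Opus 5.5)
Your proposal is correct and follows the paper's proof essentially step for step. Both arguments use Cauchy's theorem to push the two horizontal lines of the Laplace-inversion formula onto the real axis, use the jump $\Phi_--\Phi_+=i(\mu_+-\mu_-)\Gamma$ on $|\fc_r|>\delta$, and use the limiting absorption principle (Proposition \ref{prop:LAP}) on a small circle around $0$.

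The differences are cosmetic. The paper treats your two half-circles as a single full circle $|\fc|=\delta$ traversed counterclockwise, so both halves contribute $+i\pi L_\pm$; this is consistent with the sum identity you actually use, despite the ``$\pm i\pi$'' you wrote. The paper also simply imports the decay of $\Phi$ on the vertical sides from Proposition 2.11 of \cite{LiZhao2024}, rather than deriving it from Proposition \ref{prop-lowerbound}. That proposition only controls boundary values on the real axis, so it would not by itself give you the vertical-side bound.
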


\begin{proof}
  By Assumption \ref{assum} that $\mathcal{R}$ has no eigenvalues except $0$, \mbox{$\{c_r+ic_i|c_i \neq 0\ \text{and}\ c_r^2+c_i^2>\delta\}$} is in the resolvent set of $\mathcal{L}$ for any $\delta>0$.
  Thus, $\left(\fc-\mathcal{L}\right)^{-1}$ is an analytic operator-valued function in this domain.

  It then follows from the Cauchy integral theorem and \eqref{eq-limphipm} that
  \begin{align*}
      \Psi(t,y)&=\frac{1}{2\pi i}\lim_{T\to +\infty}\int_{-T}^{T}e^{-i(\fc_r-ic^\diamond)t}\cdot i\Phi(y,\fc_r-ic^\diamond)-e^{-i(\fc_r+ic^\diamond)t} \cdot i\Phi(y,\fc_r+ic^\diamond)d\fc_r\\
      &=\frac{1}{2\pi }\lim_{T\to +\infty}\left(\int_{\left(-T,-\delta\right)\cup \left(\delta,T\right)}e^{-i\fc_r t}\left(\Phi_-(y,\fc_r)-\Phi_+(y,\fc_r)\right)d\fc_r
      +\int_{\left|\fc\right|=\delta}e^{-i\fc t}\Phi(y,\fc)d\fc\right)\\
      &\quad +\frac{i}{2\pi }\lim_{T\to +\infty}\int_{0}^{c^\diamond}e^{-i\left(-T-i\fc_i\right)t}\Phi(y,-T-i\fc_i)-e^{-i\left(T-i\fc_i\right)t}\Phi(y,T-i\fc_i)d\fc_i\\
      &\quad +\frac{i}{2\pi }\lim_{T\to +\infty}\int_{0}^{c^\diamond}e^{-i\left(-T+i\fc_i\right)t}\Phi(y,-T+i\fc_i)-e^{-i\left(T+i\fc_i\right)t}\Phi(y,T+i\fc_i)d\fc_i.
  \end{align*}
  By Proposition 2.11 in \cite{LiZhao2024}, we have
  \begin{align*}
      \left\|\Phi(y,\fc)\right\|_{L_y^\infty}\lesssim \frac{\left\|\omega_{\text{in}}(y)\right\|_{L_y^2}}{1+\left|\fc_r\right|}\quad \text{for}\quad \left|\fc_r\right|\ge C.
  \end{align*}
  Then   for any $\delta\in (0,\frac{1}{2}c^\diamond)$, it follows
  \begin{align*}
      \Psi(t,y)&=\frac{i}{2\pi}\int_{\left(-\infty,-\delta\right)\cup \left(\delta,\infty\right)}e^{-i\fc_r t}\left(\Phi_-(y,\fc_r)-\Phi_+(y,\fc_r)\right)d\fc_r
      +\frac{1}{2\pi}\int_{\left|\fc\right|=\delta}e^{-i\fc t}\Phi(y,\fc)d\fc\\
      &=-\frac{1}{\pi}\int_{\left(-\infty,-\delta\right)\cup \left(\delta,\infty\right)}e^{-i\fc_r t} \frac{\mathcal J_1\mathcal J_4+\mathcal J_2\mathcal J_3}{\mathcal J_1^2+\mathcal J_2^2} \Gamma(y,\fc_r) d\fc_r
      +\frac{1}{2\pi}\int_{\left|\fc\right|=\delta}e^{-i\fc t}\Phi(y,\fc)d\fc.
  \end{align*}
  Now letting $\delta\to 0+$, by the limiting absorption principle and the dominated convergence theorem, we obtain
  \begin{align*}
      &\lim_{\delta\to 0+}\frac{1}{2\pi }\int_{\left|\fc\right|=\delta}e^{-i\fc t}\Phi(y,\fc)d\fc \overset{\fc=\delta e^{i\theta}}{=\joinrel=\joinrel=}
      \lim_{\delta\to 0+}\frac{i}{2\pi }\int_{-\pi}^{\pi} e^{-i\fc t}\fc\Phi(y,\fc) d\theta=\frac{i}{2\pi}\int_{-\pi}^{\pi}\lim_{\delta\to 0+} e^{-i\fc t}\fc\Phi(y,\fc) d\theta\\
      &\quad =\frac{1}{2\pi }\int_{0}^{\pi}\frac{\mathcal{J}_3(\omega_{\text{in}},0)+\frac{i\pi}{b'(0)^2}\omega_{\text{in}}(0)}{ \partial_{\fc_r}\mathcal{J}_1(0)- i\partial_{\fc_r}\mathcal{J}_2(0)}\Gamma(y,0) d\theta
      +\frac{1}{2\pi }\int_{-\pi}^{0}\frac{\mathcal{J}_3(\omega_{\text{in}},0)-\frac{i\pi}{b'(0)^2}\omega_{\text{in}}(0)}{\partial_{\fc_r}\mathcal{J}_1(0)+ i\partial_{\fc_r}\mathcal{J}_2(0)}\Gamma(y,0) d\theta\\
      &\quad=\frac{\Gamma(y,0)}{\left(\partial_{\fc_r}\mathcal{J}_1(0)\right)^2+\left(\partial_{\fc_r}\mathcal{J}_2(0)\right)^2}\left(\partial_{\fc_r}\mathcal{J}_1(0)\mathcal{J}_3(\omega_{\text{in}},0)-\partial_{\fc_r}\mathcal{J}_2(0)\mathcal{J}_4(\omega_{\text{in}},0)\right).
  \end{align*}

  Thus, we obtain the explicit expression of $\Psi(t,y)$ as
  \begin{align*}
      \Psi(t,y)&=-\frac{1}{\pi}\text{P.V.}\int_{-\infty}^{+\infty} e^{-i\fc_r t} \frac{\mathcal J_1\mathcal J_4+\mathcal J_2\mathcal J_3}{\mathcal J_1^2+\mathcal J_2^2} \Gamma(y,\fc_r) d\fc_r\\
      &\quad +\frac{\Gamma(y,0)}{\left(\partial_{\fc_r}\mathcal{J}_1(0)\right)^2+\left(\partial_{\fc_r}\mathcal{J}_2(0)\right)^2}
      \cdot\left(\partial_{\fc_r}\mathcal{J}_1(0)\mathcal{J}_3(\omega_{\text{in}},0)-\partial_{\fc_r}\mathcal{J}_2(0)\mathcal{J}_4(\omega_{\text{in}},0)\right).
  \end{align*}
This completes the proof of the proposition.
\end{proof}

\section{Inviscid damping type estimate}
Based on the representation formula of the stream function derived above, $\Psi(t,y)$ can be decomposed as
\begin{align}\label{eq-Psi-decomp}
  \Psi(t,y)=\Psi_1(t,y)+\Psi_2(t,y),
\end{align}
where $\Psi_1(t,y)$ corresponds to the contribution from the embedded eigenvalue, and $\Psi_2(t,y)$ corresponds to the contribution from the continuous spectrum.

In this section, we derive that
\begin{align*}
  \Psi_1(t,y)=P(\omega_{\text{in}})\Gamma(y,0)=\frac{\mathcal{J}_{3}(\omega_{\text{in}},0)+ i\mathcal{J}_{4}(\omega_{\text{in}},0)}{\partial_{\fc_r}\mathcal{J}_1(0)- i\partial_{\fc_r}\mathcal{J}_2(0)}\Gamma(y,0),
\end{align*}
where $P(\omega_{\mathrm{in}})$ is referred to as the projection coefficient. Since $\fc_*=0$, this part does not evolve in time. For the remaining part
\begin{align*}
  \Psi_2(t,y)=\Psi(t,y)-P(\omega_{\text{in}})\Gamma(y,0),
\end{align*}
we prove that it admits decay due to inviscid damping.

Moreover, we show that $\Psi_2(t,y)$ admits a further decomposition of the form
\begin{align*}
  \Psi_2(t,y)=\mathring{\Psi}(t,y)-\frac{1}{\pi}\cdot\frac{\left(\partial_{\fc_r}\mathcal{J}_1(0)\mathcal{J}_4(\omega_{\text{in}},0)+\partial_{\fc_r}\mathcal{J}_2(0)\mathcal{J}_3(\omega_{\text{in}},0)\right)\Gamma(y,0)}
    {\left(\partial_{\fc_r}\mathcal{J}_1(0)\right)^2+\left(\partial_{\fc_r}\mathcal{J}_2(0)\right)^2}\Psi_{\chi}(t),
\end{align*}
where
\begin{align}\label{eq-chi}
 \Psi_{\chi}(t)=\int_{(-\infty,-\frac{1}{2})\cup(\frac{1}{2},+\infty)} \chi(\fc)\frac{e^{-i\fc t}}{\fc} d\fc
      +\int_{l_{\frac{1}{2}}} \frac{e^{-i\fc t}}{\fc} d\fc
\end{align}
depends only on $t$ and exhibits polynomial decay in time.
Here \(l_a\) denotes the lower half-circle of radius \(a\) centred at the origin, traversed counter-clockwise.
On the other hand, we prove that $\mathring{\Psi}(t,y)$ satisfies an inviscid damping estimate, which is established via a duality argument.

\begin{proposition}\label{prop-inviscid-damping}
Let $b(y)$ satisfy Assumption~\ref{assum}. If we further assume that $\partial_{\fc_r}\mathcal{J}_1(0)+ i\partial_{\fc_r}\mathcal{J}_2(0)\neq 0$, then it holds that
\begin{align*}
  \left\|\Psi_2(t,y)\right\|_{L_y^2}\lesssim  \frac{1}{t}\left\|\omega_{\text{in}}(y)\right\|_{H_y^2},\quad \left\|\Psi_2(t,y)\right\|_{H_y^1}\to 0,\quad \text{as}\quad t\to +\infty.
\end{align*}
More precisely, we prove that
\begin{align*}
    &\left\|\mathring{\Psi}(t,y)\right\|_{L_y^2}\lesssim \frac{1}{t}\left\|\omega_{\text{in}}(y)\right\|_{H_y^2},\quad \left|\Psi_{\chi}(t)\right|
    \lesssim \frac{1}{t},\quad \left\|\mathring{\Psi}(t,y)\right\|_{ H_y^1}\to 0,\quad \text{as}\quad t\to +\infty,
\end{align*}
where
\begin{align*}
    \mathring \Psi(t,\fc_r)&=\Psi_2(t,y)+\frac{1}{\pi}\cdot\frac{\left(\partial_{\fc_r}\mathcal{J}_1(0)\mathcal{J}_4(\omega_{\text{in}},0)+\partial_{\fc_r}\mathcal{J}_2(0)\mathcal{J}_3(\omega_{\text{in}},0)\right)\Gamma(y,0)}
    {\left(\partial_{\fc_r}\mathcal{J}_1(0)\right)^2+\left(\partial_{\fc_r}\mathcal{J}_2(0)\right)^2}\Psi_{\chi}(t).
    \end{align*}
\end{proposition}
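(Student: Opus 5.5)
We start from the representation formula in Proposition \ref{prop:RF}. Subtracting $P(\omega_{\text{in}})\Gamma(y,0)$ leaves a constant multiple of $\Gamma(y,0)$; this constant is exactly the $\mathcal J_4$-type combination $\frac{1}{\pi}\frac{\partial_{\fc_r}\mathcal J_1(0)\mathcal J_4+\partial_{\fc_r}\mathcal J_2(0)\mathcal J_3}{(\partial_{\fc_r}\mathcal J_1(0))^2+(\partial_{\fc_r}\mathcal J_2(0))^2}$ times $\pi i$ (half-residue). So $\Psi_2$ equals the principal-value integral minus this half-residue term. We write the half-residue as a contour integral: the principal value of $\int e^{-i\fc t}/\fc$ over $(-\frac12,\frac12)$, together with the half-residue, equals the integral over the lower half-circle $l_{\frac12}$. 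Let $m(\fc_r)=\frac{\mathcal J_1\mathcal J_4+\mathcal J_2\mathcal J_3}{\mathcal J_1^2+\mathcal J_2^2}$. This function has a simple pole at $0$, and its residue is $m_0=\frac{\partial_{\fc_r}\mathcal J_1(0)\mathcal J_4(0)+\partial_{\fc_r}\mathcal J_2(0)\mathcal J_3(0)}{(\partial\mathcal J_1)^2+(\partial\mathcal J_2)^2}$, which we compute by expanding $\mathcal J_j(\fc_r)=\partial\mathcal J_j(0)\fc_r+O(\fc_r^2)$ using Proposition \ref{prop-j1}. We then set
\begin{align*}
\mathring\Psi(t,y)=-\frac1\pi\int e^{-i\fc_r t}\Big(m(\fc_r)\Gamma(y,\fc_r)-\frac{m_0\chi(\fc_r)}{\fc_r}\Gamma(y,0)\Big)d\fc_r ,
\end{align*}
where $\chi$ is a cutoff equal to $1$ near $0$. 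The integrand is now regular at $\fc_r=0$. The difference between $\Psi_2$ and $\mathring\Psi$ is then $-\frac1\pi m_0\Gamma(y,0)\Psi_\chi(t)$, with $\Psi_\chi$ as in \eqref{eq-chi}.

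\textbf{Bound on $\Psi_\chi$.} On the real part of the path we integrate by parts once: $e^{-i\fc t}=\frac{i}{t}\partial_\fc e^{-i\fc t}$. The boundary terms at $\pm\frac12$ cancel against the endpoints of the half-circle. On $l_{\frac12}$ we do not integrate by parts; instead we deform into the lower half-plane, where $|e^{-i\fc t}|=e^{\fc_i t}$ decays, or equivalently integrate by parts along the arc as well. Either way we get $|\Psi_\chi(t)|\lesssim 1/t$, since $\chi(\fc)/\fc$ and its derivative are integrable away from $0$.

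\textbf{Bound on $\mathring\Psi$ by duality.} Following \cite{WeiZhangZhao2018}, we pair $\mathring\Psi$ with a test function $g\in L^2_y$ and exchange the order of integration:
\begin{align*}
\langle\mathring\Psi(t),g\rangle=-\frac1\pi\int e^{-i\fc_r t}F(\fc_r)\,d\fc_r,\qquad F(\fc_r)=m(\fc_r)\langle\Gamma(\cdot,\fc_r),g\rangle-\frac{m_0\chi}{\fc_r}\langle\Gamma(\cdot,0),g\rangle .
\end{align*}
Integrating by parts once in $\fc_r$ gives a factor $1/t$ times $\|\partial_{\fc_r}F\|_{L^1}$. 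To control this we need two ingredients.
\begin{itemize}
\item[(i)] $L^2_{\fc_r}$ bounds on $\partial^n_{\fc_r}\mathcal J_{3,4}$, supplied by Proposition \ref{prop-j3} in terms of $\|\omega_{\text{in}}\|_{H^2}$. Combined with Proposition \ref{prop-j1}, Proposition \ref{prop-lowerbound}, and weights $(1+|\fc_r|)^{-1}$ to pass from $L^2$ to $L^1$, these control the regular part of $m$.
\item[(ii)] Regularity of $\fc_r\mapsto\langle\Gamma(\cdot,\fc_r),g\rangle$. Here we use the good derivative $\partial_G$ and Lemma \ref{lem-paryphi}. Since $\Gamma$ is built from $\phi$ and the integrals $\int 1/\phi^2$, we expect $\|\partial_{\fc_r}\Gamma(\cdot,\fc_r)\|_{L^2_y}$ to be bounded, with exponential decay from \eqref{eq-phi1-est}.
\end{itemize}
Near $0$, the subtracted function $m-m_0\chi/\fc_r$ is $C^1$ after one Taylor step. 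This needs two derivatives of $\mathcal J_{1,2}$ (Proposition \ref{prop-j1}) and of $\mathcal J_{3,4}$ (Proposition \ref{prop-j3}), which is where the $H^2$ norm of $\omega_{\text{in}}$ enters. The term $\frac{\chi}{\fc_r}(\langle\Gamma(\fc_r),g\rangle-\langle\Gamma(0),g\rangle)$ is also regular.

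\textbf{Main obstacle and the $H^1$ statement.} The main obstacle is that derivatives of $m$ near $0$ involve the quotient with vanishing denominator. Controlling $\partial_{\fc_r}(m-m_0/\fc_r)$ in $L^1$ near $0$ requires second-order expansions, and Proposition \ref{prop-j3} gives $\mathcal J_3,\mathcal J_4$ only in $L^2$-based Sobolev spaces. I would handle this by Sobolev embedding $H^2_{\fc_r}\subset C^1$ on $(-2,2)$ and the lower bound $\frac{\mathcal J_1^2+\mathcal J_2^2}{\fc_r^2}\ge C_l$ from Proposition \ref{prop-lowerbound}. For the $H^1$ decay of $\mathring\Psi$, we cannot hope for a rate. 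Instead, the Riemann--Lebesgue lemma applied to the $H^1_y$-valued integrand gives convergence to $0$ for a dense class, and uniform boundedness of $\mathring\Psi$ in $H^1$ extends this to all data.
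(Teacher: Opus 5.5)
Your reduction is correct and matches the paper's: subtracting $P(\omega_{\text{in}})\Gamma(y,0)$ and the cut-off pole gives $\Psi_2=\mathring\Psi-\frac1\pi m_0\Gamma(y,0)\Psi_\chi(t)$. Your bound on $\Psi_\chi$ is also fine; the boundary terms at $\pm\frac12$ do cancel, since $\chi(\fc)e^{-i\fc t}/\fc$ is continuous along the whole contour.

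\textbf{The gap is in the duality estimate.} Your ingredients do not give $\|\pa_{\fc_r}F\|_{L^1(\mathbb R)}\lesssim\|g\|_{L^2}\|\omega_{\text{in}}\|_{H^2}$.

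\emph{Large $|\fc_r|$.} For $|\fc_r|\ge1$, $m$ and $\pa_{\fc_r}m$ lie only in $L^2_{\fc_r}$. Already $\mathcal J_4(\omega_{\text{in}},\fc_r)=\pi\omega_{\text{in}}(y_{\fc})/b'(y_{\fc})^2$ is exactly as integrable as $\omega_{\text{in}}$, so for general $H^2$ data it need not be in $L^1_{\fc_r}$. There is no weight $(1+|\fc_r|)^{-1}$ to exploit. On the other side, a uniform bound on $\|\pa_{\fc_r}\Gamma(\cdot,\fc_r)\|_{L^2_y}$ only puts $G(\fc_r)=\langle\Gamma(\cdot,\fc_r),g\rangle$ and $G'$ in $L^\infty_{\fc_r}$. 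Moreover $\|\Gamma(\cdot,\fc_r)\|_{L^2_y}$ does not decay: for Couette, $\Gamma(y,\fc_r)=-e^{-|y-\fc_r|}$ and $G=-e^{-|\cdot|}*g$. Hence $(\pa_{\fc_r}m)G+mG'$ is in $L^2$, not $L^1$. What you need is $\pa^n_{\fc_r}G\in L^2_{\fc_r}$ with norm $\lesssim\|g\|_{L^2}$, so that H\"older closes.

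\emph{Near $\fc_r=0$.} Here $\pa_{\fc_r}\big(\frac{G(\fc_r)-G(0)}{\fc_r}\big)=\int_0^1 sG''(s\fc_r)\,ds$ requires two derivatives of $G$, but $\pa^2_{\fc_r}\Gamma(\cdot,\fc_r)\notin L^2_y$. Indeed, $\pa_y\Gamma$ has a $\ln|y-y_{\fc}|$ singularity with coefficient proportional to $b''(y_{\fc})$. Beyond that, it jumps across $y_{\fc}$ by an amount proportional to $\mathcal J_1(\fc_r)$. Two $\fc_r$-derivatives therefore produce a Dirac mass at $y_{\fc}$ and a $\mathrm{p.v.}\frac{1}{y-y_{\fc}}$ kernel. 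Their pairings with $g$ are $g(y_{\fc})$ and a Hilbert transform of $g$ at $y_{\fc}$, which are $L^2_{\fc_r}$ functions, not bounded ones. So your claim that $\frac{\chi}{\fc_r}\big(\langle\Gamma(\fc_r),g\rangle-\langle\Gamma(0),g\rangle\big)$ is regular does not follow from (ii).

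\textbf{The missing idea is the identity the paper uses.} Write the test function as $(\pa_y^2-1)g$ with $g\in H^2$ and integrate by parts using the Rayleigh equation:
\[
\int_{\mathbb R}\Gamma(y,\fc_r)\,(\pa_y^2-1)g(y)\,dy=-\mathcal J_3(b''g,\fc_r)-b'(y_{\fc})g(y_{\fc})\mathcal J_1(\fc_r).
\]
This moves all derivatives onto $g$. Proposition~\ref{prop-j3} applied to $b''g\in H^2$ then puts $\pa^n_{\fc_r}$ of the pairing in $L^2_{\fc_r}$ for $n\le2$, with bound $\lesssim\|(\pa_y^2-1)g\|_{L^2}$. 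Each $K_j$ becomes a product of two $L^2_{\fc_r}$ functions and a multiplier such as $\frac{\fc_r\mathcal J_2}{\mathcal J_1^2+\mathcal J_2^2}$ with bounded derivatives (Propositions~\ref{prop-j1} and \ref{prop-lowerbound}). H\"older then gives $\|\pa^2_{\fc_r}K_j\|_{L^1}\lesssim\|\omega_{\text{in}}\|_{H^2}$, and $\big\|\pa_{\fc_r}\frac{K(\fc_r)-K(0)}{\fc_r}\big\|_{L^1}\le\|\pa^2_{\fc_r}K\|_{L^1}$ handles the origin.

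You could instead try exponentially localized pointwise bounds on $\Gamma$ and $\pa_{\fc_r}\Gamma$ around $y_{\fc}$ combined with Young's inequality, plus a Hilbert-transform estimate for the second derivative. Either way, some $L^2_{\fc_r}$ argument of this kind is indispensable. The uniform $H^1$ bound that your density argument relies on needs the same input.
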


\begin{proof}
We begin by analyzing the principal value integral appearing in the representation formula given in Proposition \ref{prop:RF}.
By our assumption, $\mathcal{J}_1(0)=\mathcal{J}_2(0)=0$ and $\partial_{\fc_r}\mathcal{J}_1(0)+i\partial_{\fc_r}\mathcal{J}_2(0)\neq0$.  The function $\frac{\mathcal J_1\mathcal J_4+\mathcal J_2\mathcal J_3}{\mathcal J_1^2+\mathcal J_2^2}$
may exhibit a first-order singularity at the origin.
To handle this singularity, we examine the following two limits. By L'Hospital's rule, we have
  \begin{align*}
      \lim_{\fc_r\to 0}\frac{\mathcal J_1\mathcal J_4\fc_r}{\mathcal J_1^2+\mathcal J_2^2}&=
      \lim_{\fc_r\to 0}\frac{\left(\mathcal{J}_1\mathcal{J}_4\right)'\fc_r+\mathcal{J}_1\mathcal{J}_4}{2\left(\mathcal{J}_1\mathcal{J}'_1+\mathcal{J}_2\mathcal{J}'_2\right)}
      =\lim_{\fc_r\to 0}\frac{\left(\mathcal{J}_1\mathcal{J}_4\right)''\fc_r+2\left(\mathcal{J}_1\mathcal{J}_4\right)'}{2\left(\mathcal{J}_1\mathcal{J}''_1+\left(\mathcal{J}'_1\right)^2+\mathcal{J}_2\mathcal{J}''_2+\left(\mathcal{J}'_2\right)^2\right)}\\
      &=\frac{\partial_{\fc_r}\mathcal{J}_1(0)\mathcal{J}_4(\omega_{\text{in}},0)}{\left(\partial_{\fc_r}\mathcal{J}_1(0)\right)^2+\left(\partial_{\fc_r}\mathcal{J}_2(0)\right)^2},
  \end{align*}
and similarly,
  \begin{align*}
      \lim_{\fc_r\to 0}\frac{\mathcal J_2\mathcal J_3\fc_r}{\mathcal J_1^2+\mathcal J_2^2}=\frac{\partial_{\fc_r}\mathcal{J}_2(0)\mathcal{J}_3(\omega_{\text{in}},0)}{\left(\partial_{\fc_r}\mathcal{J}_1(0)\right)^2+\left(\partial_{\fc_r}\mathcal{J}_2(0)\right)^2}.
  \end{align*}

We then write
\begin{equation}\label{eq-decom-decay}
  \begin{aligned}
      &\text{P.V.}\int_{-\infty}^{+\infty} e^{-i\fc_r t} \frac{\mathcal J_1\mathcal J_4+\mathcal J_2\mathcal J_3}{\mathcal J_1^2+\mathcal J_2^2} \Gamma(y,\fc_r) d\fc_r\\
      &\quad=\text{P.V.}\int_{-\infty}^{+\infty} \frac{e^{-i\fc_r t}}{\fc_r} \left(\frac{\mathcal J_1\mathcal J_4\fc_r}{\mathcal J_1^2+\mathcal J_2^2}\Gamma(y,\fc_r)-\left.\chi(\fc_r)\cdot\frac{\mathcal{J}_1'\mathcal{J}_4}{\left(\mathcal{J}'_1\right)^2+\left(\mathcal{J}'_2\right)^2} \right|_{\fc_r=0}\Gamma(y,0) \right) d\fc_r\\
      &\qquad +\text{P.V.}\int_{-\infty}^{+\infty} \frac{e^{-i\fc_r t}}{\fc_r} \left(\frac{\mathcal J_2\mathcal J_3\fc_r}{\mathcal J_1^2+\mathcal J_2^2}\Gamma(y,\fc_r)-\left.\chi(\fc_r)\cdot\frac{\mathcal{J}_2'\mathcal{J}_3}{\left(\mathcal{J}'_1\right)^2+\left(\mathcal{J}'_2\right)^2} \right|_{\fc_r=0}\Gamma(y,0) \right) d\fc_r\\
      &\qquad +\text{P.V.}\int_{-\infty}^{+\infty} \chi(\fc_r)\frac{e^{-i\fc_r t}}{\fc_r}  d\fc_r\cdot \frac{\partial_{\fc_r}\mathcal{J}_1(0)\mathcal{J}_4(\omega_{\text{in}},0)+\partial_{\fc_r}\mathcal{J}_2(0)\mathcal{J}_3(\omega_{\text{in}},0)}{\left(\partial_{\fc_r}\mathcal{J}_1(0)\right)^2+\left(\partial_{\fc_r}\mathcal{J}_2(0)\right)^2}\Gamma(y,0) ,
  \end{aligned}
\end{equation}
where $\chi(\fc)$ is a smooth cut-off function defined on $\mathbb C$ such that $\operatorname{supp}\chi\subset \left\{\fc| \left|\fc\right|<2\right\}$ and $\chi=1$ for ${\left|\fc\right|<1}$.

\begin{figure}[t]
    \centering
    \includegraphics[width=0.8\textwidth]{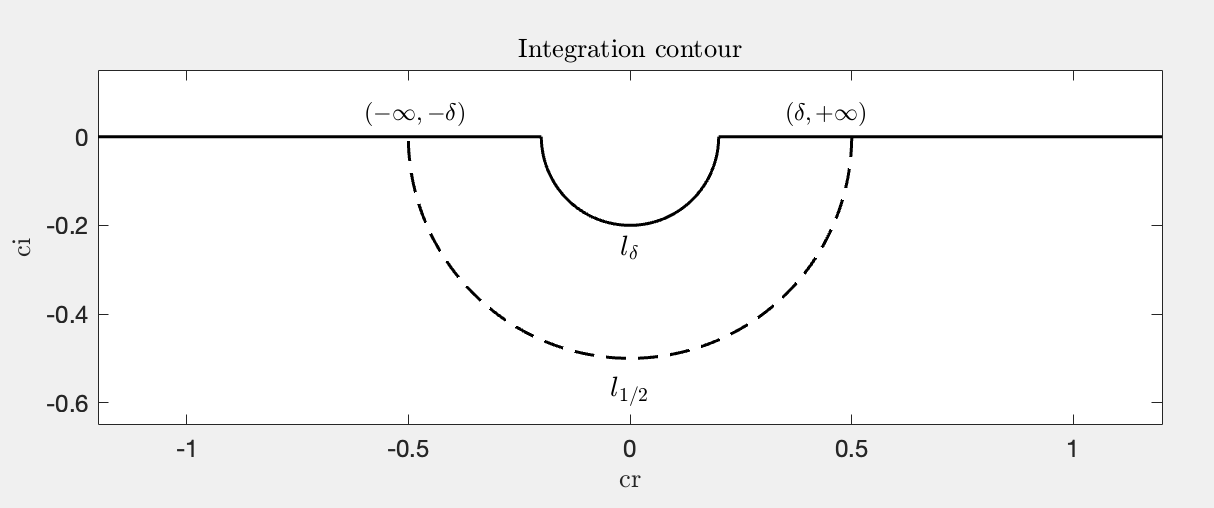}
    \caption{integration contour}
    \label{fig:intcon}
\end{figure}
And since $\chi(\fc)\frac{e^{-i\fc t}}{\fc}$ is analytic in $0<\left|\fc\right|<1$,  for any $0<\delta<\frac{1}{2}$, we have
\begin{align*}
   \left(\int_{-\frac{1}{2}}^{-\delta}+\int_{\delta}^{\frac{1}{2}}+\int_{l_\delta}-\int_{l_\frac{1}{2}} \right)\chi(\fc)\frac{e^{-i\fc t}}{\fc} d\fc=0,
\end{align*}
where \(l_a\) denotes the lower half-circle of radius \(a\) centred at the origin, traversed counter-clockwise.

Then we obtain (see Figure \ref{fig:intcon} for the integration contour)
\begin{equation}\label{eq-chi2}
  \begin{aligned}
      &\text{P.V.}\int_{-\infty}^{+\infty} \chi(\fc)\frac{e^{-i\fc t}}{\fc} d\fc\\
      &\quad=\int_{(-\infty,-\frac{1}{2})\cup(\frac{1}{2},+\infty)} \chi(\fc)\frac{e^{-i\fc t}}{\fc} d\fc
      +\int_{l_{\frac{1}{2}}} \chi(\fc)\frac{e^{-i\fc t}}{\fc} d\fc
      -\lim_{\delta\to 0}\int_{l_{\delta}} \chi(\fc)\frac{e^{-i\fc t}}{\fc} d\fc\\
      &\quad =\int_{(-\infty,-\frac{1}{2})\cup(\frac{1}{2},+\infty)} \chi(\fc)\frac{e^{-i\fc t}}{\fc} d\fc
      +\int_{l_{\frac{1}{2}}} \frac{e^{-i\fc t}}{\fc} d\fc-\pi i
      =\Psi_{\chi}(t)-\pi i.
  \end{aligned}
\end{equation}

Thus plugging \eqref{eq-chi2}  and \eqref{eq-decom-decay} into the explicit expression of $\Psi(t,y)$ in Proposition \ref{prop:RF}, we arrive at
\begin{align*}
    \Psi(t,y)&=\mathring \Psi(t,\fc_r)
    -\frac{1}{\pi}\Psi_{\chi}(t)\cdot\frac{\partial_{\fc_r}\mathcal{J}_1(0)\mathcal{J}_4(\omega_{\text{in}},0)+\partial_{\fc_r}\mathcal{J}_2(0)\mathcal{J}_3(\omega_{\text{in}},0)}{\left(\partial_{\fc_r}\mathcal{J}_1(0)\right)^2+\left(\partial_{\fc_r}\mathcal{J}_2(0)\right)^2}\Gamma(y,0)\\
    &\quad +\frac{\mathcal{J}_{3}(\omega_{\text{in}},0)+ i\mathcal{J}_{4}(\omega_{\text{in}},0)}{\partial_{\fc_r}\mathcal{J}_1(0)- i\partial_{\fc_r}\mathcal{J}_2(0)}\cdot \Gamma(y,0),
\end{align*}
where
\begin{align*}
    \mathring \Psi(t,\fc_r)&=-\frac{1}{\pi}\text{P.V.}\int_{-\infty}^{+\infty} \chi(\fc_r)\frac{e^{-i\fc_r t}}{\fc_r} \left(\frac{\mathcal J_1\mathcal J_4\fc_r}{\mathcal J_1^2+\mathcal J_2^2}\Gamma(y,\fc_r)-\left.\frac{\mathcal{J}_1'\mathcal{J}_4}{\left(\mathcal{J}'_1\right)^2+\left(\mathcal{J}'_2\right)^2} \right|_{\fc_r=0}\Gamma(y,0) \right) d\fc_r\\
    &\qquad -\frac{1}{\pi}\text{P.V.}\int_{-\infty}^{+\infty} \chi(\fc_r)\frac{e^{-i\fc_r t}}{\fc_r} \left(\frac{\mathcal J_2\mathcal J_3\fc_r}{\mathcal J_1^2+\mathcal J_2^2}\Gamma(y,\fc_r)-\left.\frac{\mathcal{J}_2'\mathcal{J}_3}{\left(\mathcal{J}'_1\right)^2+\left(\mathcal{J}'_2\right)^2} \right|_{\fc_r=0}\Gamma(y,0) \right) d\fc_r\\
    &\qquad -\frac{1}{\pi}\text{P.V.}\int_{-\infty}^{+\infty}\left(1- \chi(\fc_r)\right)e^{-i\fc_r t} \frac{\mathcal J_1\mathcal J_4+\mathcal J_2\mathcal J_3}{\mathcal J_1^2+\mathcal J_2^2}\Gamma(y,\fc_r) d\fc_r\\
    &=-\frac{1}{\pi}\left(\mathring \Psi_1(t,\fc_r)+\mathring \Psi_2(t,\fc_r)+\mathring \Psi_3(t,\fc_r)\right).
\end{align*}

Thus, it remains to establish decay estimates for $\mathring \Psi(t,c_r)$ and $\Psi_{\chi}(t)$. We begin by considering $\Psi_{\chi}(t)$ and writing
\begin{align*}
  \Psi_{\chi}(t)=\int_{(-\infty,-\frac{1}{2})\cup(\frac{1}{2},+\infty)}\chi(\fc) \frac{e^{-i\fc t}}{\fc} d\fc
      +\int_{l_{\frac{1}{2}}} \frac{e^{-i\fc t}}{\fc} d\fc=\Psi_{\chi,1}(t)+\Psi_{\chi,2}(t).
\end{align*}
Treating $\Psi_{\chi,1}(t)$ as a real integral and integrating by parts yields
\begin{align*}
  \Psi_{\chi,1}(t)&=\frac{1}{-it}\left(\left.\frac{\chi(\fc)}{\fc}e^{-i\fc t} \right|_{\fc=\frac{1}{2}}^{-\frac{1}{2}}-\int_{(-\infty,-\frac{1}{2})\cup(\frac{1}{2},+\infty)}e^{-i\fc t} \frac{d}{d\fc}\frac{\chi(\fc)}{\fc} d\fc\right).
\end{align*}
Then noting that $\frac{d}{d\fc}\chi(\fc)$ is supported in $(-2,2)$, we have
\begin{equation}\label{eq-psichi1}
  \begin{aligned}
    \left|\Psi_{\chi,1}(t)\right|&\le \frac{1}{t}\left(4+\int_{(-2,-\frac{1}{2})\cup(\frac{1}{2},2)}\left|\frac{\chi'(\fc)}{\fc}\right|d\fc+\int_{(-\infty,-\frac{1}{2})\cup(\frac{1}{2},+\infty)}\frac{\chi(\fc)}{\fc^2} d\fc\right)\lesssim \frac{1}{t}.
  \end{aligned}
\end{equation}
For $\Psi_{\chi,2}(t)$, since $\frac{e^{-i\fc t}}{\fc}$ is analytic, we apply the Newton-Leibniz formula to obtain
\begin{align*}
  \Psi_{\chi,2}(t)&=\frac{1}{-it}\int_{l_{\frac{1}{2}}}\left( \frac{d}{d\fc}\frac{e^{-i\fc t}}{\fc}+\frac{e^{-i\fc t}}{\fc^2} \right)d\fc
  =\frac{1}{-it}\left(\left.\frac{e^{-i\fc t}}{\fc}\right|_{\fc=-\frac{1}{2}}^{\frac{1}{2}}+ \int_{l_{\frac{1}{2}}}\frac{e^{-i\fc t}}{\fc^2} d\fc\right).
\end{align*}
Since $l_{1/2}$ lies in the lower half-plane, we have $|e^{-i c t}| \le 1$ for $\fc\in l_{1/2}, t\in (0,+\infty)$. Therefore
\begin{equation}\label{eq-psichi2}
  \begin{aligned}
    \left|\Psi_{\chi,2}(t)\right|&\le \frac{1}{t}\left(4+ \int_{l_{\frac{1}{2}}}\frac{1}{\left|\fc\right|^2} d\fc\right)
    \le \frac{1}{t}\left(4+ 2\pi\right).
  \end{aligned}
\end{equation}
Combining \eqref{eq-psichi1} and \eqref{eq-psichi2}, we obtain
\begin{equation}\label{eq-psichi}
  \begin{aligned}
    \left|\Psi_{\chi}(t)\right|
      \lesssim \frac{1}{t}.
  \end{aligned}
\end{equation}
Indeed, the same argument yields $\left|\Psi_{\chi}(t)\right|\lesssim t^{-n}$ for any $n\geq 1$.

Next, we give decay estimate for $\mathring \Psi(t,c_r)$ via a duality argument. Recall that
\begin{align*}
  \Gamma(y,\fc_r)=\left\{\begin{aligned}
  &\phi(y,\fc_r)\int_{-\infty}^y\frac{1}{\phi^2(y',\fc_r)}dy',\quad y<y_\fc,\\
  &\phi(y,\fc_r)\int_{+\infty}^y\frac{1}{\phi^2(y',\fc_r)}dy',\quad y>y_\fc,
  \end{aligned}\right.
\end{align*}
Following \cite{WeiZhangZhao2018} (see also \cite{RenZhang2025} and \cite{LiMasmoudiZhao2022critical}), for a test function $f(y)=(\partial_y^2-1)g(y)\in L^2$, an integration by parts in $y$ yields
\begin{align*}
    \int_{\mathbb{R}}\Gamma(y,\fc_r)\cdot (\partial_y^2-1)g(y) d y&=
    -\int_{\mathbb{R}}\frac{\int_{y_{\fc}}^{y'}\phi(y,\fc_r)\cdot (\partial_y^2-1)g(y) d y}{\phi^2(y',\fc_r)} d y'\\
    &=-\mathcal{J}_3\left(b''g,\fc_r\right)-b'(y_{\fc})g(y_{\fc})\mathcal{J}_1(\fc_r).
\end{align*}
Consequently, an integration by parts in $\fc_r$ gives
\begin{equation}\label{eq-psi1}
  \begin{aligned}
      \left\|\mathring \Psi_1(t,y)\right\|_{L^2_y}&=\sup_{\left\|(\partial_y^2-1)g\right\|_{L^2}=1}
      \int_{\mathbb{R}}\mathring \Psi_1(t,y)\cdot (\partial_y^2-1)g(y)\ d y\\
      &=\sup_{\left\|(\partial_y^2-1)g\right\|_{L^2}=1}
      \int_{\mathbb{R}} e^{-i\fc_r t}\chi(\fc_r)\frac{K_1(g,\fc_r)-K_1(g,0)}{\fc_r}d\fc_r\\
      &=\sup_{\left\|(\partial_y^2-1)g\right\|_{L^2}=1}
      \left(\frac{1}{it}\right)\int_{\mathbb{R}} e^{-i\fc_r t}\partial_{\fc_r}\left(\chi(\fc_r)\frac{K_1(g,\fc_r)-K_1(g,0)}{\fc_r}\right)d\fc_r\\
      &\le \sup_{\left\|(\partial_y^2-1)g\right\|_{L^2}=1}
      \frac{1}{t}\left\|\partial_{\fc_r}\left(\chi(\fc_r)\frac{K_1(g,\fc_r)-K_1(g,0)}{\fc_r}\right)\right\|_{L^1_{\fc_r}},
  \end{aligned}
\end{equation}
where
\begin{align*}
    K_1(g,\fc_r)=\frac{\mathcal J_2(\fc_r)\mathcal J_3(\omega_{\text{in}},\fc_r)\fc_r}{\mathcal J_1(\fc_r)^2+\mathcal J_2(\fc_r)^2}\left(\mathcal J_3(b''g,\fc_r)+b'(y_{\fc})g(y_{\fc})\mathcal J_1(\fc_r) \right).
\end{align*}
Since $\partial_{c_r}\chi(c_r)$ is supported on the set $1 < |c_r| < 2$, we have
\begin{equation}\label{eq-par2k2}
  \begin{aligned}
      \left\|\partial_{\fc_r}\chi(\fc_r)\cdot\frac{K_1(g,\fc_r)-K_1(g,0)}{\fc_r}\right\|_{L^1}
      &\lesssim \int_{1<\left|\fc_r\right|<2}\frac{\left|K_1(g,\fc_r)-K_1(g,0)\right|}{\left|\fc_r\right|}d\fc_r\\
      \lesssim& \sup_{1<\left|\fc_r\right|<2} \frac{\left|K_1(g,\fc_r)-K_1(g,0)\right|}{\left|\fc_r\right|}
      \lesssim  \left\|\partial_{\fc_r}K_1(g,\fc_r)\right\|_{L^1_{\fc_r}}.
  \end{aligned}
\end{equation}
Applying Minkowski's inequality yields
\begin{equation}\label{eq-par2k1}
  \begin{aligned}
      &\left\|\chi(\fc_r)\cdot \partial_{\fc_r}\frac{K_1(g,\fc_r)-K_1(g,0)}{\fc_r}\right\|_{L^1_{\fc_r}}
      \le  \left\|\chi(\fc_r)\right\|_{L^\infty_{\fc_r}}\cdot\left\|\int_{0}^{1}s \left(\partial_{\fc_r}^2K_1\right)(g,s\fc_r)ds\right\|_{L^1_{\fc_r}}\\
      &\qquad \le \int_{0}^{1} \left\|s \left(\partial_{\fc_r}^2K_1\right)(g,s\fc_r)\right\|_{L^1_{\fc_r}}ds
      = \int_{0}^{1} \left\| \left(\partial_{\fc_r}^2K_1\right)(g,\fc_r)\right\|_{L^1_{\fc_r}}ds
      = \left\|\partial_{\fc_r}^2K_1(g,\fc_r)\right\|_{L^1_{\fc_r}}.
  \end{aligned}
\end{equation}
Combining Proposition \ref{prop-j1}, Proposition \ref{prop-lowerbound} and Proposition \ref{prop-j3}, we have for $\fc_r\in(-2,2)$ and $\omega_{\text{in}}\in H^2$
\begin{align*}
    &\left|\frac{\fc_r^2}{\mathcal{J}_1(\fc_r)^2+\mathcal{J}_2(\fc_r)^2}\right| \le C,
    \quad\left|\partial_{\fc_r}^n\frac{\mathcal{J}_1(\fc_r)}{\fc_r}\right| +\left|\partial_{\fc_r}^n\frac{\mathcal{J}_2(\fc_r)}{\fc_r}\right|\le C,\\
    &\left\|\partial_{\fc_r}^n\mathcal{J}_3(\omega_{\text{in}},\fc_r)\right\|_{L^2}\le C\left\|\omega_{\text{in}}\right\|_{H^n}, \quad n=0,1,2.
\end{align*}
Then by H\"older's inequality,we have
\begin{equation}\label{eq-k1}
  \begin{aligned}
    \left\|K_1(g,\fc_r)\right\|_{L^1_{\fc_r}}+\left\|\partial_{\fc_r}K_1(g,\fc_r)\right\|_{L^1_{\fc_r}}+\left\|\partial_{\fc_r}^2K_1(g,\fc_r)\right\|_{L^1_{\fc_r}}
    \lesssim \left\|\omega_{\text{in}}\right\|_{H^2}
  \end{aligned}
\end{equation}
Thus combining \eqref{eq-psi1}-\eqref{eq-k1}  we have
\begin{equation}\label{eq-est-psi1}
  \begin{aligned}
      &\left\|\mathring \Psi_1(t,y)\right\|_{L^2_y}\lesssim \frac{1}{t}\left\|\omega_{\text{in}}\right\|_{H^2}.
  \end{aligned}
\end{equation}

In addition, we have
\begin{align*}
      \left\|\mathring \Psi_1(t,y)\right\|_{H^1_y}^2&=
      \int_{\mathbb{R}} e^{-i\fc_r t}\chi(\fc_r)\frac{K_1(\mathring \Psi_1(t,y),\fc_r)-K_1(\mathring \Psi_1(t,y),0)}{\fc_r}d\fc_r,
\end{align*}
with
\begin{align*}
  &\left\|\mathring \Psi_1(t,y)\right\|_{H^1_y}^2\lesssim\left\|\chi(\fc_r)\frac{K_1(\mathring \Psi_1(t,y),\fc_r)-K_1(\mathring \Psi_1(t,y),0)}{\fc_r}\right\|_{L^1_{\fc_r}}\\
    &\qquad \lesssim \left\|K_1(\mathring \Psi_1(t,y),\fc_r)\right\|_{L^2_{\fc_r}}+\left\|\partial_{\fc_r}K_1(\mathring \Psi_1(t,y),\fc_r)\right\|_{L^2_{\fc_r}}
    \lesssim \left\|\mathring \Psi_1(t,y)\right\|_{H^1_y}\left\|\omega_{\text{in}}\right\|_{H^2}.
\end{align*}
Then we have
  \begin{align*}
  &\left\|\chi(\fc_r)\frac{K_1(\mathring \Psi_1(t,y),\fc_r)-K_1(\mathring \Psi_1(t,y),0)}{\fc_r}\right\|_{L^1_{\fc_r}}
     \lesssim \left\|\omega_{\text{in}}\right\|_{H^2}^2,
\end{align*}
which further implies by Riemann-Lebesgue lemma that
  \begin{equation}\label{eq-est-psi1h1}
    \begin{aligned}
        \left\|\mathring \Psi_1(t,y)\right\|_{H^1_y}\to 0, \quad \text{as}\ t\to +\infty.
    \end{aligned}
  \end{equation}

The same argument applies to $\mathring \Psi_2(t,y)$, yielding
  \begin{align*}
      \left\|\mathring \Psi_1(t,y)\right\|_{L^2_y}&\le \sup_{\left\|(\partial_y^2-1)g\right\|_{L^2}=1}
      \frac{1}{t}\left\|\partial_{\fc_r}\left(\chi(\fc_r)\frac{K_2(g,\fc_r)-K_2(g,0)}{\fc_r}\right)\right\|_{L^1_{\fc_r}},
  \end{align*}
where
\begin{align*}
    K_2(g,\fc_r)=\frac{\mathcal J_1(\fc_r)\mathcal J_4(\omega_{\text{in}},\fc_r)\fc_r}{\mathcal J_1(\fc_r)^2+\mathcal J_2(\fc_r)^2}\left(\mathcal J_3(b''g,\fc_r)+b'(y_{\fc})g(y_{\fc})\mathcal J_1(\fc_r) \right).
\end{align*}
And we also have
\begin{equation}\label{eq-est-psi2}
  \begin{aligned}
      &\left\|\mathring \Psi_2(t,y)\right\|_{L^2_y}\lesssim \frac{1}{t}\left\|\omega_{\text{in}}\right\|_{H^2},\quad \text{and}
        \left\|\mathring \Psi_2(t,y)\right\|_{H^1_y}\to 0, \quad \text{as}\ t\to +\infty.
    \end{aligned}
  \end{equation}

As for $\mathring \Psi_3(t,y)$, we have
\begin{align*}
  \left\|\mathring \Psi_3(t,y)\right\|_{L^2_y}&=\sup_{\left\|(\partial_y^2-1)g\right\|_{L^2}=1}
      \int_{\mathbb{R}}\mathring \Psi_3(t,y)\cdot (\partial_y^2-1)g(y)\ d y\\
      &\le \sup_{\left\|(\partial_y^2-1)g\right\|_{L^2}=1}
      \frac{1}{t}\left\|\partial_{\fc_r}\left(\left(1-\chi(\fc_r)\right)K_3(g,\fc_r)\right)\right\|_{L^1_{\fc_r}},
\end{align*}
where
\begin{align*}
    K_3(g,\fc_r)=\frac{\mathcal J_1(\fc_r)\mathcal J_4(\omega_{\text{in}},\fc_r)+\mathcal J_2(\fc_r)\mathcal J_3(\omega_{\text{in}},\fc_r)}{\mathcal J_1(\fc_r)^2+\mathcal J_2(\fc_r)^2}\left(\mathcal J_3(b''g,\fc_r)+b'(y_{\fc})g(y_{\fc})\mathcal J_1(\fc_r) \right).
\end{align*}
Similarly, combining Proposition \ref{prop-j1}, Proposition \ref{prop-lowerbound} and Proposition \ref{prop-j3} we have for $\fc_r\in\mathbb{R}\setminus (-1,1)$   and $\omega_{\text{in}}\in H^2$
\begin{align*}
    &\left|\frac{1}{\mathcal{J}_1(\fc_r)^2+\mathcal{J}_2(\fc_r)^2}\right| \le C,
    \quad\left|\partial_{\fc_r}^n\mathcal{J}_1(\fc_r)\right| +\left|\partial_{\fc_r}^n\mathcal{J}_2(\fc_r)\right|\le C,\\
    &\left\|\partial_{\fc_r}^n\mathcal{J}_3(\omega_{\text{in}},\fc_r)\right\|_{L^2}+\left\|\partial_{\fc_r}^n\mathcal{J}_4(\omega_{\text{in}},\fc_r)\right\|_{L^2}\le C\left\|\omega_{\text{in}}\right\|_{H^n}, \quad n=0,1.
\end{align*}
Then by the H\"older's inequality, we have
\begin{align*}
  \left\|K_3(g,\fc_r)\right\|_{L^1_{\fc_r}}+\left\|\partial_{\fc_r}K_3(g,\fc_r)\right\|_{L^1_{\fc_r}}
    \lesssim \left\|\omega_{\text{in}}\right\|_{H^2},
\end{align*}
which leads  to
\begin{equation}\label{eq-est-psi3}
  \begin{aligned}
      &\left\|\mathring \Psi_3(t,y)\right\|_{L^2_y}\lesssim \frac{1}{t}\left\|\omega_{\text{in}}\right\|_{H^2},\;\;\text{and}\;\;\left\|\mathring \Psi_3(t,y)\right\|_{H^1_y}\to 0 \quad \text{as}\ t\to +\infty.
  \end{aligned}
\end{equation}
Thus by \eqref{eq-est-psi1}, \eqref{eq-est-psi2} and \eqref{eq-est-psi3}, we have $ \left\|\mathring \Psi(t,y)\right\|_{L^2_y}\lesssim \frac{1}{t}\left\|\omega_{\text{in}}\right\|_{H^2}$. And  by \eqref{eq-est-psi1h1}, \eqref{eq-est-psi2} and \eqref{eq-est-psi3}, we have $\left\|\mathring \Psi(t,y)\right\|_{H^1_y}\to 0$, as $t\to +\infty$.
Then the proof is complete.

\end{proof}

\section{Proof of the main results}
In this section, we show the instability from the non-normality of the Rayleigh operator, and give the proof of the main results.

\subsection{Instability associated with a simple embedded eigenvalue}
In this subsection, we prove Theorem \ref{thm}. Since a stronger linear growth will be shown in the next subsection when $\fc_*=0$ is a multiple embedded eigenvalue, it suffices to consider the case where $\fc_*=0$ is a simple embedded eigenvalue. The proof in this case relies on the inviscid damping result established in Proposition \ref{prop-inviscid-damping}.
\begin{proof}[Proof of Theorem \ref{thm}]
  For $b(y)$ satisfying  $\partial_{\fc_r}\mathcal{J}_1(0)=\partial_{\fc_r}\mathcal{J}_2(0)=0$,
   the $L^2$ norm of $\omega(t,y)$ has linear growth, which is later established in the proof of Theorem \ref{thm2}.

  Therefore, in what follows, we restrict our consideration to $b(y)$ for which  $\partial_{\fc_r}\mathcal{J}_1(0)+i\partial_{\fc_r}\mathcal{J}_2(0)\neq 0$.
  For this class, the inviscid damping estimate given in Proposition \ref{prop-inviscid-damping} holds.

  First, for a given large $M > 0$,
  we construct $\omega_{\text{in}}$  with
  $\left\|\omega_{\text{in}}\right\|_{L^2}+\left\|\omega_{\text{in}}\right\|_{L^{\infty}}\le 1$,
  such that
  \begin{equation}\label{eq-con-j3j4}
    \begin{aligned}
      \mathcal{J}_{3}(\omega_{\text{in}},0)\gtrsim M\quad \text{and}\quad \mathcal{J}_{4}(\omega_{\text{in}},0)=0.
    \end{aligned}
  \end{equation}
Let $\chi_Z(y)$ be a smooth cut-off function which is supported in $\left[\frac{1}{Z},1\right]$ with
  value $1$ on $\left[\frac{2}{Z},\frac{1}{2}\right]$.
  Define
\begin{align*}
    \omega_{\text{in}}(y)\eqdef\frac{1}{2} \chi_Z(y).
\end{align*}
Then by a direct calculation, we have
\begin{align*}
  \left\|\omega_{\text{in}}\right\|_{L^2}+\left\|\omega_{\text{in}}\right\|_{L^{\infty}}\le 1.
\end{align*}
Since $\phi_1(y,0)\ge 1$, we have
\begin{align*}
    \int_{0}^{y}\omega_{\text{in}}(y')\phi_1(y',0)dy'\ge
    \left\{
    \begin{array}{ll}
        & \frac{y}{2}-\frac{1}{Z}, \quad y\in \left[\frac{2}{Z},\frac{1}{2}\right],\\
        & 0,\quad \text{otherwise}.
    \end{array}\right.
\end{align*}
Then applying \eqref{eq-phi1-est} yields
\begin{align*}
    \mathcal J_3\left(\omega_{\text{in}},0\right)\gtrsim \int_{\frac{2}{Z}}^{\frac{1}{2}}\frac{\frac{y}{2}-\frac{1}{Z}}{y^2}dy
    =\frac{1}{2}\ln\frac{Z}{4}+\frac{2}{Z}-\frac{1}{2}.
\end{align*}
Taking $Z=e^M$ for $M>0$ large enough,  we obtain the required $\omega_{\text{in}}$.

  Next, we show this $\omega_{\text{in}}$
  is exactly the example required by Theorem \ref{thm}. Define
  \begin{align*}
    \omega_*(y)\eqdef (\partial_y^2-1)\Gamma(y,0).
  \end{align*}
  Then for the stream function $\Psi(t, y)$
  solving \eqref{eq: LinearEuler-Psi}, we recall the decomposition \eqref{eq-Psi-decomp} and write:
  \begin{align*}
    \Psi(t, y)&=\left(\partial_y^2-1\right)^{-1}e^{-i\mathcal{R}t}\omega_{\text{in}}(y)\\
    &=\left(\partial_y^2-1\right)^{-1}\left(\frac{\mathcal{J}_{3}(\omega_{\text{in}},0)+ i\mathcal{J}_{4}(\omega_{\text{in}},0)}{\partial_{\fc_r}\mathcal{J}_1(0)- i\partial_{\fc_r}\mathcal{J}_2(0)}\omega_*(y)\right)\\
    &\quad +\left(\partial_y^2-1\right)^{-1}\left(e^{-i\mathcal{R}t}\omega_{\text{in}}(y)-\frac{\mathcal{J}_{3}(\omega_{\text{in}},0)+ i\mathcal{J}_{4}(\omega_{\text{in}},0)}{\partial_{\fc_r}\mathcal{J}_1(0)- i\partial_{\fc_r}\mathcal{J}_2(0)}\omega_*(y)\right)\\
    &=\Psi_1(t, y)+\Psi_2(t, y).
  \end{align*}
  Then, by Proposition \ref{prop-inviscid-damping}, we have $\left\|\Psi_2(t, y)\right\|_{L^2}\lesssim \frac{1}{t}$. Consequently,
  by \eqref{eq-con-j3j4}, there exists a constant $T>0$ large enough such that for all $t\ge T$,
  \begin{align*}
    \left\|\Psi(t, y)\right\|_{L^2}&\ge  \frac{1}{2}\left\|\Psi_1(t, y)\right\|_{L^2}
    \gtrsim \left|\mathcal{J}_{3}(\omega_{\text{in}},0)\right|\left\|\left(\partial_y^2-1\right)^{-1}\omega_{*}(y)\right\|_{L^2}
    \gtrsim M\left\|\Gamma(y,0)\right\|_{L^2}.
  \end{align*}
  Applying Plancherel's theorem, we obtain for $t\ge T$,
  \begin{equation}\label{est-l2}
    \begin{aligned}
      \left\|e^{-i\mathcal{R}t}\omega_{\text{in}}(y)\right\|_{L^2}&=\left\|\left(1+\xi^2\right)\widehat{\Psi}(t, \xi)\right\|_{L^2}\ge\left\|\Psi(t, y)\right\|_{L^2}
      \gtrsim M\left\|\Gamma(y,0)\right\|_{L^2}.
    \end{aligned}
  \end{equation}
  As for the $L^{\infty}$ norm, using the Green's function, we have
  \begin{align*}
    \left\|\Psi(t, y)\right\|_{L^\infty}=\left\|\int_{\mathbb{R}}\frac{1}{2}e^{-\left|y-x\right|}e^{-i\mathcal{R}t}\omega_{\text{in}}(x)dx\right\|_{L^\infty}
    \lesssim \left\|e^{-i\mathcal{R}t}\omega_{\text{in}}(y)\right\|_{L^\infty}.
  \end{align*}
  Again by Proposition \ref{prop-inviscid-damping}, we have
  \begin{align*}
    \left\|\Psi_2(t, y)\right\|_{L^{\infty}}^2\le \left\|\Psi_2(t, y)\right\|_{L^{2}}\left\|\partial_y\Psi_2(t, y)\right\|_{L^{2}}
    \lesssim \frac{1}{t}\left\|\Psi_2(t, y)\right\|_{H^{1}}^2\lesssim \frac{1}{t}.
  \end{align*}
  Similar to the $L^2$ estimate, by Proposition \ref{prop-inviscid-damping}, for  $t\ge T$ with $T>0$ large enough, we have
  \begin{align*}
    \left\|\Psi(t, y)\right\|_{L^\infty}\ge  \frac{1}{2}\left\|\Psi_1(t, y)\right\|_{L^\infty}\gtrsim M\left\|\Gamma(y,0)\right\|_{L^\infty}.
  \end{align*}
  Finally, we obtain for $t\ge T$,
  \begin{equation}\label{est-linfty}
    \begin{aligned}
      \left\|e^{-i\mathcal{R}t}\omega_{\text{in}}(y)\right\|_{L^\infty}\gtrsim \left\|\Psi(t, y)\right\|_{L^\infty}\gtrsim M\left\|\Gamma(y,0)\right\|_{L^\infty}.
    \end{aligned}
  \end{equation}
Thus, combining \eqref{est-l2} and \eqref{est-linfty}, we conclude the proof of Theorem \ref{thm}.
\end{proof}
\begin{remark}
  We may alternatively choose \(\omega_{\text{in}}(y)=Z e^{-Z^4y^{2}}\) with a large parameter \(Z>0\). Then \(\|\omega_{\text{in}}\|_{L^{2}}\lesssim 1\) while \(\|\omega_{\text{in}}\|_{L^{\infty}}\gtrsim Z\).
We also have \(\mathcal{J}_{4}(\omega_{\text{in}},0)\gtrsim\omega_{\text{in}}(0)=Z\).
Repeating the arguments of the previous proof shows that this choice of \(\omega_{\text{in}}\) also leads to an instability. 
\end{remark}

\subsection{Instability associated with a multiple embedded eigenvalue}
In this subsection, we give the proof of Theorem \ref{thm2}.

According to Lemma 2 in \cite{stepin1996rayleigh},
the condition $\partial_{\fc_r}\mathcal{J}_1(\fc_r)=\partial_{\fc_r}\mathcal{J}_2(\fc_r)=0$
is equivalent to the eigenvalue $\fc_r$ having geometric multiplicity greater than 1. In this situation, we show that the solutions $\varphi^\pm(y,\fc_r)$ constructed in \eqref{eq-phi-pm} have certain specific properties, which ensure the existence of an associated solving \eqref{eq-assosi}. This observation is a key step in the construction of unstable solutions.
\begin{proposition}\label{prop-linear-growth}Under the Assumption \ref{assum},
  for $\fc_r\in \mathbb{R}$,
  \begin{align}\label{equiv-1}
    \lim_{y\to y_{\fc}+}\partial_{\fc_r}\varphi^+(y,\fc_r)=\lim_{y\to y_{\fc}-}\partial_{\fc_r}\varphi^-(y,\fc_r)
   \;\;\text{if\;and\;only\;if}\;\;\mathcal{J}_1(\fc_r)=\mathcal{J}_2(\fc_r)=0.
  \end{align}
  Moreover, if $\mathcal{J}_1(\fc_r)=\mathcal{J}_2(\fc_r)=0$,
  then
  \begin{align}\label{equiv-2}
    \lim_{y\to y_{\fc}+}\partial_y\partial_{\fc_r}\varphi^+(y,\fc_r)=\lim_{y\to y_{\fc}-}\partial_y\partial_{\fc_r}\varphi^-(y,\fc_r)
     \;\;\;\text{if\;and\;only\;if}\;\; \partial_{\fc_r}\mathcal{J}_1(\fc_r)=\partial_{\fc_r}\mathcal{J}_2(\fc_r)=0.
  \end{align}
\end{proposition}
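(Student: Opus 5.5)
Write $\varphi^\pm(y,\fc)=\phi(y,\fc)\int_{\pm\infty}^y\phi^{-2}\,dy'$ and extend them to real $\fc=\fc_r$ on their respective sides $y\gtrless y_\fc$. The plan is to reduce both statements to explicit computations of the jumps across $y=y_\fc$. The key identity is
\begin{align*}
\varphi^+(y,\fc)-\varphi^-(y,\fc)=-\phi(y,\fc)\,W(\fc),
\end{align*}
valid for $\fc_i\neq0$, where $W$ is the Wronskian of Lemma \ref{lem-eigen}. Passing to the limit $\fc_i\to0\pm$ via Lemma \ref{lem-lim-eigen} gives, for real $\fc_r$,
\begin{align*}
\varphi^+(y,\fc_r)-\varphi^-(y,\fc_r)=-\phi(y,\fc_r)\big(\mathcal J_1(\fc_r)\mp i\mathcal J_2(\fc_r)\big)+(\text{jump terms}).
\end{align*}
More concretely, I would split each integral at $y_\fc$, using a Hadamard finite-part regularization near $y_\fc$ (as in the definition of $\Pi_1$), and write, for $y>y_\fc$,
\begin{align*}
\varphi^+(y,\fc_r)=\phi(y,\fc_r)\Big(\text{f.p.}\int_{y_\fc}^y\phi^{-2}\,dy'-\text{f.p.}\int_{y_\fc}^{\infty}\phi^{-2}\,dy'\Big),
\end{align*}
with the analogous formula for $\varphi^-$ when $y<y_\fc$. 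Since $\phi(y,\fc_r)=(b-\fc_r)\phi_1$, the finite-part expansion $\phi\cdot\text{f.p.}\int_{y_\fc}^y\phi^{-2}=-\frac{1}{b'(y_\fc)}+\frac{b''(y_\fc)}{b'(y_\fc)^3}(b-\fc_r)\ln|b-\fc_r|+O(y-y_\fc)$ is the same on both sides, up to the $\ln|\cdot|$ term. The differences of the one-sided limits at $y_\fc$ are therefore governed by the constants $\text{f.p.}\int_{y_\fc}^{\infty}$ and $\text{f.p.}\int_{-\infty}^{y_\fc}$, whose sum is exactly $\mathcal J_1(\fc_r)$. In addition, $b''(y_\fc)$ enters through the logarithm, and this accounts for $\mathcal J_2$.

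For \eqref{equiv-1}, I would differentiate these representations in $\fc_r$. The cleanest route is to use the good derivative $\partial_G=\partial_{\fc_r}+\partial_y/b'(y_\fc)$, which commutes well with the finite parts, together with Lemma \ref{lem-paryphi} to control $\partial_G\phi_1$ near $y_\fc$. Applying $\partial_{\fc_r}$ to $\phi\cdot C^\pm(\fc_r)$, where $C^\pm$ are the one-sided constants, gives $\partial_{\fc_r}\phi\cdot C^\pm+\phi\,\partial_{\fc_r}C^\pm$. Since $\phi(y_\fc,\fc_r)=0$ and $\partial_{\fc_r}\phi(y_\fc,\fc_r)=-1$, the value at $y_\fc$ is $-C^\pm$ plus terms common to both sides. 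The singular $\ln$ piece differentiates into a $\ln|b-\fc_r|$ term multiplied by $b''(y_\fc)$, which is bounded at $y_\fc$ only if $b''(y_\fc)=0$, that is, only if $\mathcal J_2=0$. Hence the one-sided limits of $\partial_{\fc_r}\varphi^\pm$ exist and agree if and only if $C^++C^-=\mathcal J_1=0$ and $\mathcal J_2=0$. This is the first equivalence.

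For \eqref{equiv-2}, assume $\mathcal J_1=\mathcal J_2=0$. I would repeat the argument one order higher: compute $\partial_y\partial_{\fc_r}\varphi^\pm$ at $y_\fc^\pm$ using the expansion of the finite-part integral to the next order. The jump of the $y$-derivative then picks up $b'(y_\fc)\,\partial_{\fc_r}(C^++C^-)=b'(y_\fc)\,\partial_{\fc_r}\mathcal J_1$ from $\partial_y\phi(y_\fc)=b'(y_\fc)$. It also picks up a logarithmic or sign-asymmetric term whose coefficient is $b'''(y_\fc)$. Using $b''(y_\fc)=0$, this coefficient is a nonzero multiple of $\partial_{\fc_r}\mathcal J_2(\fc_r)$, because $\partial_{\fc_r}\big(b''(y_\fc)/b'(y_\fc)^3\big)=b'''(y_\fc)/b'(y_\fc)^4$ when $b''(y_\fc)=0$. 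Matching is therefore equivalent to $\partial_{\fc_r}\mathcal J_1=\partial_{\fc_r}\mathcal J_2=0$. This step also shows that $b'''(y_\fc)=0$ is necessary, as announced in the introduction.

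The main obstacle I expect is justifying the exchange of $\partial_{\fc_r}$ with the finite-part integrals and tracking the logarithmic terms precisely enough to see that their coefficients are exactly $\mathcal J_2$ and $\partial_{\fc_r}\mathcal J_2$ with no hidden cross terms. To handle this, I would first rewrite everything through the $\partial_G$ formalism as in the proof of Proposition \ref{prop-j1}. By the estimates \eqref{eq-pargkb1}, \eqref{eq-pargkb2} and Lemma \ref{lem-paryphi}, the terms produced by $\partial_G$ are integrable, so all non-explicit contributions are continuous across $y_\fc$. Only the explicit model integrals $\int(b-\fc_r)^{-2}$ and the $b''$-weighted $\int(b-\fc_r)^{-1}$ need to be computed by hand.
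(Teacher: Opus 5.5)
Your proposal is correct and follows essentially the paper's route. Your one-sided finite-part constants $C^\pm$, with $C^++C^-=\mathcal J_1(\fc_r)$, repackage the paper's regularized limits of $\Pi_1^\pm/b'(y_\fc)+\Pi_2^\pm$ in \eqref{eq-j1-pi}; as in \eqref{eq-parcr+}--\eqref{eq-j2neq0} and \eqref{eq-paryparcr-}--\eqref{eq-parcr-j2neq0}, the identities $\partial_{\fc_r}\phi(y_\fc,\fc_r)=-1$ and $\partial_y\phi(y_\fc,\fc_r)=b'(y_\fc)$ turn them into jumps $\pm\mathcal J_1$ and $\pm b'(y_\fc)\partial_{\fc_r}\mathcal J_1$, while the symmetric logarithmic divergences with coefficients $b''(y_\fc)$ and then $b'''(y_\fc)$ account for $\mathcal J_2$ and $\partial_{\fc_r}\mathcal J_2$ (there are only harmless sign slips: the $(b-\fc_r)\ln|b-\fc_r|$ coefficient is $-b''(y_\fc)/b'(y_\fc)^3$, and with your orientation the limits pick up $+C^+$ and $-C^-$ rather than $-C^\pm$).
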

\begin{proof}
Recall that $\varphi^-(y,\fc_r)=\phi(y,\fc_r)\int^y_{-\infty}
  \frac{1}{\phi(y',\fc_r)^2}dy'$ for $ y<y_c$, 
   $\varphi^+(y,\fc_r)=\phi(y,\fc_r)\int^y_{+\infty}
  \frac{1}{\phi(y',\fc_r)^2}dy'$ for  $y>y_c$,
and 
    $\mathcal J_1(\fc_r)=\frac{1}{b'(y_\fc)}\Pi_1(\fc_r)+\Pi_2(\fc_r)$, $\mathcal J_2(\fc_r)=\pi\frac{b''(y_\fc)}{{b'}(y_\fc)^3}$,
  with
  \begin{align*}
  \Pi_1(\fc_r)= \text{P.V.}\int_{-\infty}^{+\infty} \frac{b'(y_\fc)-b'(y)}{(b(y)-\fc_r)^2} dy,\quad \Pi_2(\fc)= \int_{-\infty}^{+\infty} \frac{1}{(b(y)-\fc_r)^2}\left(\frac{1}{\phi_1(y,\fc_r)^2}-1\right) dy.
\end{align*}

  Define
  \begin{align*}
    &{\Pi}^{\pm}_1(y,\fc_r)\eqdef\int^{y}_{\pm\infty}\frac{b'(y_\fc)-b'(y')}{(b(y')-\fc_r)^2}dy',\quad {\Pi}^{\pm}_2(y,\fc_r)\eqdef\int^y_{\pm\infty}\frac{1}{(b(y')-\fc_r)^2}\left(\frac{1}{\phi_1^2(y',\fc_r)}-1\right)dy'.
  \end{align*}
For $y<y_{\fc}$, we write
  \begin{equation}\label{decom-varphi}
    \begin{aligned}
        \varphi^-(y,\fc_r)&=\phi(y,\fc_r)\int^y_{-\infty}\frac{1}{\phi^2(y',\fc_r)}dy'\\
        &=\phi(y,\fc_r)\int^y_{-\infty}\frac{1}{(b(y')-\fc_r)^2}dy'+\phi(y,\fc_r)\int^y_{-\infty}\frac{1}{(b(y')-\fc_r)^2}\left(\frac{1}{\phi_1^2(y',\fc_r)}-1\right)dy'\\
        &=\frac{\phi(y,\fc_r)}{b'(y_\fc)}\int^{y}_{-\infty}\frac{b'(y_\fc)-b'(y')}{(b(y')-\fc_r)^2}dy'+\frac{\phi(y,\fc_r)}{b'(y_\fc)}\int^{y}_{-\infty}\frac{b'(y')}{(b(y')-\fc_r)^2}dy'\\
        &\qquad +\phi(y,\fc_r)\int^y_{-\infty}\frac{1}{(b(y')-\fc_r)^2}\left(\frac{1}{\phi_1^2(y',\fc_r)}-1\right)dy'\\
        &=\frac{\phi(y,\fc_r)}{b'(y_\fc)}\int^{y}_{-\infty}\frac{b'(y_\fc)-b'(y')}{(b(y')-\fc_r)^2}dy'-\frac{\phi_1(y,\fc_r)}{b'(y_\fc)}\\
        &\qquad +\phi(y,\fc_r)\int^y_{-\infty}\frac{1}{(b(y')-\fc_r)^2}\left(\frac{1}{\phi_1^2(y',\fc_r)}-1\right)dy'\\
        &=\frac{\phi(y,\fc_r)}{b'(y_\fc)}{\Pi}^-_1(y,\fc_r)-\frac{\phi_1(y,\fc_r)}{b'(y_\fc)}+\phi(y,\fc_r){\Pi}^-_2(y,\fc_r).
    \end{aligned}
  \end{equation}
  By the L'Hospital's rule,
  \begin{align}\label{lim-pi1}
      \lim_{y\to y_{\fc}-}\frac{\phi_1(y,\fc_r)\left(b(y)-\fc_r\right)}{b'(y_\fc)}{\Pi}^-_1(y,\fc_r)&
      =\lim_{y\to y_{\fc}-}\frac{\int_{-\infty}^{y}\frac{b'(y_{\fc})-b'(y')}{\left(b(y')-\fc_r\right)^2}dy'}{\frac{1}{\left(b(y)-\fc_r\right)}}=0.
  \end{align}
  For ${\Pi}^-_2(y,\fc_r)$, we have
  \begin{align*}
      0\le -{\Pi}^-_2(y,\fc_r)&=-\int_{-\infty}^{y}\frac{1}{\left(b(y')-\fc_r\right)^2}\left(\frac{1}{\phi_1(y',\fc_r)^2}-1\right)dy' \le -{\Pi}_2.
  \end{align*}
  In Proposition \ref{prop-j1} we proved the bound \(|{\Pi}_2(\fc_r)|\le C\).
  Thus ${\Pi}^-_2(y,\fc_r)$ is bounded, which implies that
  \begin{align}\label{lim-pi2}
      \lim_{y\to y_{\fc}-}\phi(y,\fc_r){\Pi}^-_2(\fc_r)=\lim_{y\to y_{\fc}-}\phi_1(y,\fc_r)\left(b(y)-\fc_r\right){\Pi}^-_2(\fc_r)=0.
  \end{align}
  Combining \eqref{decom-varphi}, \eqref{lim-pi1} and \eqref{lim-pi2}, we have
  \begin{align*}
      \lim_{y\to y_{\fc}-}\varphi^-(y,\fc_r)=-\frac{1}{b'(y_{\fc})}.
  \end{align*}
  By similar arguments for $y>y_{\fc}$, we have
  \begin{align}\label{varphi}
      \lim_{y\to y_{\fc}+}\varphi^+(y,\fc_r)=-\frac{1}{b'(y_{\fc})},
  \end{align}
which gives $\lim_{y\to y_{\fc}+}\varphi^+(y,\fc_r)=\lim_{y\to y_{\fc}-}\varphi^-(y,\fc_r)$. Note that this is true for any $\fc_r\in \mathbb R$.

  Now we prove that $\lim_{y\to y_{\fc}+}\partial_{\fc_r}\varphi^+(y,\fc_r)=\lim_{y\to y_{\fc}-}\partial_{\fc_r}\varphi^-(y,\fc_r)$ is equivalent to
  $\mathcal{J}_1(\fc_r)=\mathcal{J}_2(\fc_r)=0$.

  Recall that  $\partial_G=\partial_{\fc_r}+\frac{\partial_y }{b'(y_{\fc})}$.
  By \eqref{decom-varphi} we obtain
  \begin{equation}\label{eq-parvar}
    \begin{aligned}
        \partial_{\fc_r}\varphi^-(y,\fc_r)
        &=\partial_{\fc_r}\left(\frac{\phi(y,\fc_r)}{b'(y_{\fc})}\right)\cdot\Pi_1^-(y,\fc_r)
        +\frac{\phi(y,\fc_r)}{b'(y_{\fc})}\cdot\partial_{\fc_r}\Pi_1^-(y,\fc_r)-\partial_{\fc_r}\frac{\phi_1(y,\fc_r)}{b'(y_{\fc})}\\
        &\quad+\partial_{\fc_r}\phi(y,\fc_r)\cdot\Pi_2^-(y,\fc_r)
         +\phi(y,\fc_r)\partial_{\fc_r}\Pi_2^-(y,\fc_r)
    \end{aligned}
  \end{equation}
  with
  \begin{equation}
    \begin{aligned}
      &\partial_{\fc_r}\Pi_1^-(y,\fc_r)=\int_{-\infty}^{y}\partial_{G'}\frac{b'(y_{\fc})-b'(y')}{\left(b(y')-\fc_r\right)^2}dy'
      -\frac{b'(y_{\fc})-b'(y)}{b'(y_{\fc})\left(b(y)-\fc_r\right)^2},\\
      &\partial_{\fc_r}\Pi_2^-(y,\fc_r)=\int_{-\infty}^{y}\partial_{G'}\frac{1}{\left(b(y')-\fc_r\right)^2}\left(\frac{1}{\phi_1(y',\fc_r)^2}-1\right)dy'
      -\frac{1}{b'(y_{\fc})\left(b(y)-\fc_r\right)^2}\left(\frac{1}{\phi_1(y,\fc_r)^2}-1\right).
    \end{aligned}
  \end{equation}
  Using  $\phi_1(y_{\fc},\fc_r)=1$, $\partial_y\phi_1(y_{\fc},\fc_r)=0$ and
  $\left|\partial_G\phi_1(y,\fc_r)\right| \lesssim \left|y-y_{\fc}\right|^3\phi_1(y,\fc_r)$ in Lemma \ref{lem-paryphi}, a direct calculation gives
  \begin{equation}\label{lim-coin}
    \begin{aligned}
        &\lim_{y\to y_{\fc}}\partial_{\fc_r}\phi(y,\fc_r)=-\lim_{y\to y_{\fc}}\phi_1(y,\fc_r)=-1, \quad
        \lim_{y\to y_{\fc}}\partial_{\fc_r}\left(\frac{\phi(y,\fc_r)}{b'(y_{\fc})}\right)
        =\lim_{y\to y_{\fc}}\frac{\partial_{\fc_r}\phi(y,\fc_r)}{b'(y_{\fc})}=-\frac{1}{b'(y_{\fc})},\\
        &\lim_{y\to y_{\fc}}\left|\partial_{\fc_r}\phi_1(y,\fc_r)\right|\le\lim_{y\to y_{\fc}}\left|\partial_{G}\phi_1(y,\fc_r)\right|+\frac{1}{b'(y_\fc)}\left|\partial_{y}\phi_1(y,\fc_r)\right|=0,\\
        &\lim_{y\to y_{\fc}}\left(-\frac{\phi(y,\fc_r)}{b'(y_{\fc})^2}\cdot\frac{b'(y_{\fc})-b'(y)}{\left(b(y)-\fc_r\right)^2}-\partial_{\fc_r}\frac{\phi_1(y,\fc_r)}{b'(y_{\fc})}
        -\frac{\phi(y,\fc_r)}{b'(y_{\fc})}\frac{1}{\left(b(y)-\fc_r\right)^2}\left(\frac{1}{\phi_1(y,\fc_r)^2}-1\right)\right)\\
        &=\frac{b''(y_{\fc})}{b'(y_{\fc})^3}+\frac{b''(y_{\fc})}{b'(y_{\fc})^3}+0=\frac{2b''(y_{\fc})}{b'(y_{\fc})^3}=\frac{2}{\pi}\mathcal{J}_2(\fc_r).
    \end{aligned}
  \end{equation}
We also have
  \begin{equation}\label{lim-p1varpi-1}
    \begin{aligned}
        &\left|\frac{\phi(y,\fc_r)}{b'(y_{\fc})}\cdot\int_{-\infty}^{y}\partial_{G'}\frac{b'(y_{\fc})-b'(y')}{\left(b(y')-\fc_r\right)^2}dy'\right|\\
        &\qquad =\left|\frac{\phi(y,\fc_r)}{b'(y_{\fc})}\cdot\int_{-\infty}^{y}
        \frac{1}{b'(y_{\fc})}\left(\frac{b''(y_{\fc})-b''(y')}{\left(b(y')-\fc_r\right)^2}-2\frac{b(y')-b(y_{\fc})}{\left(b(y')-\fc_r\right)^2}\right)dy'\right|\\
        &\qquad \lesssim \left|y-y_{\fc}\right|\int_{-\infty}^{y}\frac{1}{\left|y'-y_{\fc}\right|}dy'\to 0, \quad \text{as }y\to y_{\fc}-.
    \end{aligned}
  \end{equation}
Following the proof \(|\partial_{\fc_r}{\Pi}_2(\fc_r)|\le C\) in Proposition \ref{prop-j1}, we have
  \begin{equation}\label{lim-p1varpi-2}
    \begin{aligned}
        &\left|\phi(y,\fc_r)\int_{-\infty}^{y}\partial_{G'}\frac{1}{\left(b(y')-\fc_r\right)^2}\left(\frac{1}{\phi_1(y',\fc_r)^2}-1\right)dy'\right|\\
        &\quad \le \phi_1(y,\fc_r)\left|y-y_{\fc}\right|\int_{-\infty}^{+\infty}\left|\partial_{G'}\frac{1}{\left(b(y')-\fc_r\right)^2}\left(\frac{1}{\phi_1(y',\fc_r)^2}-1\right)\right|dy'
        \to 0, \quad \text{as }y\to y_{\fc}-.
    \end{aligned}
  \end{equation}
  Then, combining \eqref{lim-coin}, \eqref{lim-p1varpi-1} and \eqref{lim-p1varpi-2} with \eqref{eq-parvar},
  \begin{align}\label{eq-parcr+}
    &\lim_{y\to y_{\fc}-} \partial_{\fc_r}\varphi^-(y,\fc_r)=\lim_{y\to y_{\fc}-}-\frac{1}{b'(y_{\fc})}{\Pi}^-_1(y,\fc_r)-{\Pi}^-_2(y,\fc_r)+\frac{2}{\pi}\mathcal{J}_2(\fc_r).
  \end{align}
  Similarly,  we have for $y>y_{\fc}$,
  \begin{align}\label{eq-parcr-}
    &\lim_{y\to y_{\fc}+} \partial_{\fc_r}\varphi^+(y,\fc_r)=\lim_{y\to y_{\fc}+}-\frac{1}{b'(y_{\fc})}{\Pi}^+_1(y,\fc_r)-{\Pi}^+_2(y,\fc_r)+\frac{2}{\pi}\mathcal{J}_2(\fc_r).
  \end{align}
  Due to the definition of $\mathcal{J}_1(\fc_r)$, we have
  \begin{align}\label{eq-j1-pi}
    \mathcal{J}_1(\fc_r)=\lim_{\delta\to 0+}\frac{1}{b'(y_{\fc})}\left({\Pi}^-_1(y_\fc-\delta,\fc_r)-{\Pi}^+_1(y_\fc+\delta,\fc_r)\right)+\left({\Pi}^-_2(y_\fc-\delta,\fc_r)-{\Pi}^+_2(y_\fc+\delta,\fc_r)\right).
  \end{align}
  Here, \({\Pi}^\pm_1(y,\fc_r)\) may tend to infinity whereas \({\Pi}^\pm_2(y,\fc_r)\) remain bounded as \(y\to y_{\fc}\pm\).
Nevertheless, in the principal-value sense, the difference
${\Pi}^-_1(y_\fc-\delta,\fc_r)-{\Pi}^+_1(y_\fc+\delta,\fc_r) $
stays finite when \(\delta\to 0^+\). Thus
\begin{align}\label{equiv-j1eq0}
    \lim_{y\to y_{\fc}-}\partial_{\fc_r}\varphi^-(y,\fc_r)=\lim_{y\to y_{\fc}+}\partial_{\fc_r}\varphi^+(y,\fc_r)\quad \text{implies}\quad  \mathcal{J}_1(\fc_r)=0.
  \end{align}

  If $\mathcal{J}_2(\fc_r)=0$, that is, $b''(y_\fc)=0$, then $\left|b'(y_\fc)-b'(y)\right|\lesssim b'''(y_\fc)\left|y-y_\fc\right|^2$ for $\left|y-y_\fc\right|\le 1$, we have
  \begin{equation}\label{eq-j2eq0}
    \begin{aligned}
      &\lim_{y\to y_{\fc}-}\left|{\Pi}^-_1(y,\fc_r)\right| +\lim_{y\to y_{\fc}+}\left|{\Pi}^+_1(y,\fc_r)\right|\\
      &\qquad \lesssim \int_{|y-y_\fc|\le 1}\frac{\left|y-y_\fc\right|^2}{\left|y-y_\fc\right|^2}dy+\int_{|y-y_\fc|\ge 1}\frac{1}{\left|b(y)-\fc_r\right|^2}dy<+\infty, \quad \text{if}\quad \mathcal{J}_2(\fc_r)=0.
    \end{aligned}
  \end{equation}
  If $\mathcal{J}_2(\fc_r)\neq 0$, that is $b''(y_{\fc})\neq 0$, then there exists $\delta>0$ such that for $y'\in(y_{\fc}-\delta,y_{\fc})$,
  \begin{align*}
    \frac{\left|b'(y_{\fc})-b'(y')\right|}{\left(b(y')-\fc_r\right)^2}\gtrsim \frac{1}{y_\fc-y'}>0.
  \end{align*}
   Then for $y\in(y_{\fc}-\frac{1}{2}\delta,y_{\fc})$,
      \begin{align*}
          \left|\int_{y-\frac{1}{2}\delta}^{y}\frac{b'(y_{\fc})-b'(y')}{\left(b(y')-\fc_r\right)^2}dy'\right|
          \gtrsim \int_{y-\frac{1}{2}\delta}^{y} \frac{1}{y_\fc-y'}dy'=\ln \frac{y-y_{\fc}}{y_{\fc}-y+\frac{1}{2}\delta}\to +\infty, \quad \text{as}\ y\to y_{\fc}-.
      \end{align*}
  Then
  \begin{align}\label{eq-j2neq0}
      \left|\lim_{y\to y_{\fc}-}\varphi^-(y,\fc_r)\right|=+\infty, \quad \text{if}\quad \mathcal{J}_2(\fc_r)\neq 0.
  \end{align}

  Then by \eqref{eq-j2eq0} and \eqref{eq-j1-pi}, we have
  \begin{align}\label{equiv-j1j2eq0}
    \mathcal{J}_1(\fc_r)=\mathcal{J}_2(\fc_r)=0 \;\;\text{implies}\;\; \lim_{y\to y_{\fc}-}\partial_{\fc_r}\varphi^-(y,\fc_r)=\lim_{y\to y_{\fc}+}\partial_{\fc_r}\varphi^+(y,\fc_r).
  \end{align}

  On the other hand, by \eqref{eq-j2neq0}, we have
  \begin{align}\label{equiv-j2eq0}
    \lim_{y\to y_{\fc}-}\partial_{\fc_r}\varphi^-(y,\fc_r)=\lim_{y\to y_{\fc}+}\partial_{\fc_r}\varphi^+(y,\fc_r) \;\;\text{implies}\;\; \mathcal{J}_2(\fc_r)=0.
  \end{align}
  Combining \eqref{equiv-j1eq0}, \eqref{equiv-j1j2eq0} and \eqref{equiv-j2eq0}, we obtain \eqref{equiv-1}. 

  By using the same technique, we could also show that
  \begin{align*}
    \lim_{y\to y_{\fc}+}\partial_{y}\varphi^+(y,\fc_r)=\lim_{y\to y_{\fc}-}\partial_{y}\varphi^-(y,\fc_r)
   \;\;\text{if\;and\;only\;if}\;\;\mathcal{J}_1(\fc_r)=\mathcal{J}_2(\fc_r)=0.
  \end{align*}

  In the rest of the proof, we assume that $\mathcal{J}_1(\fc_r)=\mathcal{J}_2(\fc_r)=0$ and  prove  \eqref{equiv-2}.

  For $y<y_{\fc}$,  by \eqref{decom-varphi}, we have
  \begin{equation}\label{eq-paryparcvar}
    \begin{aligned}
      &\partial_y\partial_{\fc_r}\varphi^-(y,\fc_r)=\partial_y\partial_{\fc_r}\left(\frac{\phi(y,\fc_r)}{b'(y_{\fc})}\int_{-\infty}^{y}\frac{b'(y_{\fc})-b'(y')}{\left(b(y')-\fc_r\right)^2}dy'\right)-\partial_y\partial_{\fc_r}\frac{\phi_1(y,\fc_r)}{b'(y_{\fc})}\\
        &\qquad +\partial_y\partial_{\fc_r}\left(\phi(y,\fc_r)\int_{-\infty}^{y}\frac{1}{\left(b(y')-\fc_r\right)^2}\left(\frac{1}{\phi_1(y',\fc_r)^2}-1\right)dy'\right)\\
        &\quad = \partial_y\partial_{\fc_r}\frac{\phi(y,\fc_r)}{b'(y_{\fc})}\cdot\Pi_1^-(y,\fc_r)
        +\partial_y\frac{\phi(y,\fc_r)}{b'(y_{\fc})}\cdot\partial_{\fc_r}\Pi_1^-(y,\fc_r)+\partial_{\fc_r}\left(\frac{\phi(y,\fc_r)}{b'(y_{\fc})}\cdot\frac{b'(y_{\fc})-b'(y)}{\left(b(y)-\fc_r\right)^2}\right)\\
        &\qquad -\partial_y\partial_{\fc_r}\frac{\phi_1(y,\fc_r)}{b'(y_{\fc})} +\partial_y\partial_{\fc_r}\phi(y,\fc_r)\cdot\Pi_2^-(y,\fc_r)
        +\partial_y\phi(y,\fc_r)\cdot\partial_{\fc_r}\Pi_2^-(y,\fc_r)\\
        &\qquad +\partial_{\fc_r}\left(\phi(y,\fc_r)\cdot\frac{1}{\left(b(y)-\fc_r\right)^2}\left(\frac{1}{\phi_1(y,\fc_r)^2}-1\right)\right).
    \end{aligned}
  \end{equation}

 One can easily check that $\partial_y^2\phi_1(y_{\fc},\fc_r)=\frac{1}{3}$ (see Proposition 5.3 in \cite{LiMasmoudiZhao2022critical}). Using   $\phi_1(y_{\fc},\fc_r)=1$, $\partial_y\phi_1(y_{\fc},\fc_r)=0$, together with the assumptions $\mathcal{J}_2(\fc_r)=b''(y_\fc)=0$, and the estimate
  $\left|\partial_y\frac{\partial_G\phi_1(y,\fc_r)}{\phi_1(y,\fc_r)}\right| \lesssim \left|y-y_{\fc}\right|^2$ from Lemma \ref{lem-paryphi}, a direct calculation yields
\begin{align*}
      &\lim_{y\to y_{\fc}}\partial_y\partial_{\fc_r}\phi_1(y,\fc_r)=\lim_{y\to y_{\fc}}\left(\partial_y\partial_{G}-\frac{\partial_y^2}{b'(y_{\fc})}\right)\phi_1(y,\fc_r)\\
      &\qquad\qquad\qquad\qquad=\lim_{y\to y_{\fc}}\partial_y\frac{\partial_G\phi_1(y,\fc_r)}{\phi_1(y,\fc_r)}-\frac{1}{3b'(y_{\fc})}=-\frac{1}{3b'(y_{\fc})},\\
      &\lim_{y\to y_\fc}\partial_y\partial_{\fc_r}\frac{\phi(y,\fc_r)}{b'(y_{\fc})}=\lim_{y\to y_\fc}\partial_y\partial_{\fc_r}\phi(y,\fc_r)=0,\quad
      \lim_{y\to y_\fc}\partial_y\frac{\phi(y,\fc_r)}{b'(y_{\fc})}=\frac{1}{b'(y_\fc)}\lim_{y\to y_\fc}\partial_y\phi(y,\fc_r)=1,\\
  &\lim_{y\to y_\fc}\partial_{\fc_r}\left(\frac{\phi(y,\fc_r)}{b'(y_{\fc})}\cdot\frac{b'(y_{\fc})-b'(y)}{\left(b(y)-\fc_r\right)^2}\right)
        =\lim_{y\to y_\fc}\frac{\phi_1(y,\fc_r)}{b'(y_{\fc})}\cdot\partial_{\fc_r}\frac{b'(y_{\fc})-b'(y)}{b(y)-\fc_r}\\
        &\quad =\lim_{y\to y_\fc}\frac{1}{b'(y_{\fc})}\frac{b'(y_{\fc})-b'(y)}{\left(b(y)-\fc_r\right)^2}=-\frac{b'''(y_\fc)}{2b'(y_\fc)^3},\\
        &\lim_{y\to y_\fc}\partial_y\partial_{\fc_r}\frac{\phi_1(y,\fc_r)}{b'(y_{\fc})}=\frac{\partial_y\partial_{\fc_r}\phi_1(y,\fc_r)}{b'(y_\fc)}=-\frac{1}{3b'(y_\fc)^2},\\
        &\lim_{y\to y_\fc}\partial_{\fc_r}\left(\phi(y,\fc_r)\cdot\frac{1}{\left(b(y)-\fc_r\right)^2}\left(\frac{1}{\phi_1(y,\fc_r)^2}-1\right)\right)
        =\lim_{y\to y_\fc}\partial_{\fc_r}\frac{1-\phi_1(y,\fc_r)^2}{\phi_1(y,\fc_r)\left(b(y)-\fc_r\right)}\\
        &\quad =\lim_{y\to y_\fc}\frac{-2\partial_{\fc_r}\phi_1(y,\fc_r)}{b(y)-\fc_r}
        +\frac{2\left(1-\phi_1(y,\fc_r)\right)}{\left(b(y)-\fc_r\right)^2}=\frac{1}{3b'(y_\fc)^2}.
    \end{align*}  
  Then we obtain for $y<y_\fc$
  \begin{align}\label{eq-paryparcr-}
    \partial_y\partial_{\fc_r}\varphi^-(y,\fc_r)=\partial_{\fc_r}\Pi_1^-(y,\fc_r)+b'(y_\fc)\partial_{\fc_r}\Pi_2^-(y,\fc_r)+\frac{2}{3b'(y_\fc)^2}-\frac{b'''(y_\fc)}{2b'(y_\fc)^3}.
  \end{align}
  Similarly, we have for $y>y_\fc$
  \begin{align}\label{eq-paryparcr+}
    \partial_y\partial_{\fc_r}\varphi^+(y,\fc_r)=\partial_{\fc_r}\Pi_1^+(y,\fc_r)+b'(y_\fc)\partial_{\fc_r}\Pi_2^+(y,\fc_r)+\frac{2}{3b'(y_\fc)^2}-\frac{b'''(y_\fc)}{2b'(y_\fc)^3}.
  \end{align}

By the same argument as in \eqref{equiv-j1eq0}, we obtain
  \begin{align}\label{equiv-parj1eq0}
    \lim_{y\to y_{\fc}-}\partial_y\partial_{\fc_r}\varphi^-(y,\fc_r)=\lim_{y\to y_{\fc}+}\partial_y\partial_{\fc_r}\varphi^+(y,\fc_r) \;\;\text{implies}\;\; \partial_{\fc_r}\mathcal{J}_1(\fc_r)=0.
  \end{align}

  Since $b''(y_\fc)=0$, then using Taylor's expansion,
  we obtain
  \begin{align*}
    \left|\partial_{\fc_r}\frac{b'(y_\fc)-b'(y')}{\left(b(y')-\fc_r\right)^2}\right|&=
    \left|\frac{\frac{b''(y_\fc)}{b'(y_\fc)}\left(b(y')-\fc_r\right)+2\left(b'(y_\fc)-b'(y')\right)}{\left(b(y')-\fc_r\right)^3}\right|\\
    &=\frac{-2b'''(y_\fc)\left|y-y_\fc\right|^2+O\left(\left|y-y_\fc\right|^3\right)}{\left(b(y')-\fc_r\right)^3},\quad \text{as}\quad  y\to y_\fc.
  \end{align*}
  Then it follows
  \begin{equation}\label{eq-parcr-j2eq0}
    \begin{aligned}
      &\lim_{y\to y_\fc-}\left|\partial_{\fc_r}\Pi_1^-(y,\fc_r)\right|+\lim_{y\to y_\fc+}\left|\partial_{\fc_r}\Pi_1^+(y,\fc_r)\right|\\
      &\qquad \lesssim \int_{|y-y_\fc|\le 1}\frac{\left|y-y_\fc\right|^3}{\left|y-y_\fc\right|^3}dy+\int_{|y-y_\fc|\ge 1}\frac{1}{\left|b(y)-\fc_r\right|^3}dy<+\infty, \quad \text{if}\quad\partial_{\fc_r}\mathcal{J}_2(\fc_r)=0,
    \end{aligned}
  \end{equation}
  and by the same reasoning as in \eqref{eq-j2neq0},
  \begin{equation}\label{eq-parcr-j2neq0}
    \begin{aligned}
      &\left|\lim_{y\to y_\fc-}\partial_{\fc_r}\Pi_1^-(y,\fc_r)\right|=+\infty\quad \text{if}\quad \partial_{\fc_r}\mathcal{J}_2(\fc_r)\neq 0.
    \end{aligned}
  \end{equation}
Then by \eqref{eq-paryparcr-}, \eqref{eq-paryparcr+} and \eqref{eq-parcr-j2eq0} , we obtain
  \begin{align}\label{equiv-parj1parj2eq0}
    \partial_{\fc_r}\mathcal{J}_1(\fc_r)=\partial_{\fc_r}\mathcal{J}_2(\fc_r)=0 \;\;\text{implies}\;\;  \lim_{y\to y_{\fc}-}\partial_y\partial_{\fc_r}\varphi^-(y,\fc_r)=\lim_{y\to y_{\fc}+}\partial_y\partial_{\fc_r}\varphi^+(y,\fc_r).
  \end{align}
On the other hand, by \eqref{eq-parcr-j2neq0}, we have
  \begin{align}\label{equiv-parj2eq0}
    \lim_{y\to y_{\fc}-}\partial_y\partial_{\fc_r}\varphi^-(y,\fc_r)=\lim_{y\to y_{\fc}+}\partial_y\partial_{\fc_r}\varphi^+(y,\fc_r) \;\;\text{implies}\;\; \partial_{\fc_r}\mathcal{J}_2(\fc_r)=0.
  \end{align}
Combining \eqref{equiv-parj1eq0}, \eqref{equiv-parj1parj2eq0} and \eqref{equiv-parj2eq0}, we obtain \eqref{equiv-2}, which completes the proof of the proposition.
\end{proof}

Now we give a proof of Theorem \ref{thm2} utilizing the above proposition. An example of shear flow with multiple eigenvalue can be found in Appendix \ref{appendix-B}.

\begin{proof}[Proof of Theorem \ref{thm2}]
  By the assumptions of the theorem, we set
  \begin{align*}
    \mathcal{J}_1(0)=\mathcal{J}_2(0)=\partial_{\fc_r}\mathcal{J}_1(0)=\partial_{\fc_r}\mathcal{J}_2(0)=0.
  \end{align*}
  From \eqref{varphi} together with the proof of \eqref{equiv-j1j2eq0}, the conditions $\mathcal{J}_1(0)=\mathcal{J}_2(0)=0$ imply the following limits are finite and
  \begin{align}\label{parvarphi}
      \lim_{y\to 0+}\varphi^+(y,0)=\lim_{y\to 0-}\varphi^-(y,0),\quad
      \lim_{y\to 0+}\partial_y\varphi^+(y,0)=\lim_{y\to 0-}\partial_y\varphi^-(y,0).
  \end{align}
  By definition, \(\varphi^-(y,0)\) and \(\partial_y\varphi^-(y,0)\) decay exponentially as \(y\to -\infty\); the same holds for \(\varphi^+(y,0)\) and \(\partial_y\varphi^+(y,0)\) as \(y\to +\infty\).

  Moreover, because \(\mathcal{J}_2(0)=b''(y_\fc)=0\) and \(\partial_y^2\varphi^\pm(y,0)=\varphi^\pm(y,0)+\frac{b''(y)}{b(y)}\varphi^\pm(y,0)\), the second derivatives \(\partial_y^2\varphi^\pm(y,0)\) are non-singular. Then \(\varphi^-(y,0)\in H_y^2(\mathbb{R}^-)\) and \(\varphi^+(y,0)\in H_y^2(\mathbb{R}^+)\).

  Define
  \begin{align*}
  \Gamma(y,0)=\left\{\begin{aligned}
  &\varphi^-(y,0)=\phi(y,0)\int_{-\infty}^y\frac{1}{\phi^2(y',0)}dy',\quad y<0,\\
  &\varphi^+(y,0)=\phi(y,0)\int_{+\infty}^y\frac{1}{\phi^2(y',0)}dy',\quad y>0.
  \end{aligned}\right.
  \end{align*}
  Hence $\Gamma(y,0)\in H_y^2$, $\partial_y^2\Gamma(y,0)\in L^\infty_y$ and satisfies the following equation for $\fc_r=0$:
  \begin{align}\label{eq-gamma}
    \left(b(y)-\fc_r\right)\left(\frac{d^2}{dy^2}-1\right)\psi(y)-b''(y)\psi(y)=0.
  \end{align}
By Proposition \ref{prop-linear-growth}, we have
  \begin{align*}
    &\lim_{y\to 0+}\partial_{\fc_r}\varphi^+(y,\fc_r)=\lim_{y\to 0-}\partial_{\fc_r}\varphi^-(y,\fc_r),\quad
    \lim_{y\to 0+}\partial_y\partial_{\fc_r}\varphi^+(y,\fc_r)=\lim_{y\to 0-}\partial_y\partial_{\fc_r}\varphi^-(y,\fc_r),
  \end{align*}
  Then $\partial_{\fc_r}\Gamma(y,0)$ and $\partial_y\partial_{\fc_r}\Gamma(y,0)$ are finite and continuous in $y$.

  Furthermore, Lemma \ref{lem-paryphi} gives $\left|\frac{\partial_G\phi_1(y,\fc_r)}{\phi_1(y,\fc_r)^2}\right|\lesssim e^{-C_4\left|y-y_\fc\right|}$ and $\left|\partial_y\frac{\partial_G\phi_1(y,\fc_r)}{\phi_1(y,\fc_r)^2}\right|\lesssim \left|y-y_\fc\right|^2$,
   while \eqref{eq-phi1-est} yields $\left|\partial_y\phi_1(y,\fc_r)\right|\le \phi_1(y,\fc_r)$.
  Hence for $|y-y_\fc|\ge1$, we have
  \begin{align*}
    &\left|\frac{\partial_{\fc_r}\phi(y,\fc_r)}{\phi(y,\fc_r)^2}\right|=\frac{1}{\phi(y,\fc_r)^2}\left|\left(\partial_{G}-\frac{\partial_y}{b'(y_\fc)}\right)\phi_1(y,\fc_r)\cdot\left(b(y)-\fc_r\right)+\phi_1(y,\fc_r)\right| \lesssim e^{-C_4\left|y-y_\fc\right|},\\
    &\left|\frac{\partial_y\partial_{\fc_r}\phi_1(y,\fc_r)}{\phi_1(y,\fc_r)^2}\right|=\frac{1}{\phi_1(y,\fc_r)}\left|\partial_y\frac{\left(\partial_{G}-\frac{\partial_y}{b'(y_\fc)}\right)\phi_1(y,\fc_r)}{\phi_1(y,\fc_r)}+\frac{\left(\partial_{G}-\frac{\partial_y}{b'(y_\fc)}\right)\phi_1(y,\fc_r)\partial_y\phi_1(y,\fc_r)}{\phi_1(y,\fc_r)^2}\right|\\
    &\qquad\qquad\qquad\qquad \lesssim e^{-C_4\left|y-y_\fc\right|},\\
    &\left|\frac{\partial_y\partial_{\fc_r}\phi(y,\fc_r)}{\phi(y,\fc_r)^2}\right|
    =\frac{1}{\phi(y,\fc_r)^2}\left|\partial_y\partial_{\fc_r}\phi_1(y,\fc_r)\left(b(y)-\fc_r\right)+\partial_{\fc_r}\phi_1(y,\fc_r)b'(y)-\partial_y\phi_1(y,\fc_r)\right|\\
    &\qquad\qquad\qquad\qquad \lesssim e^{-C_4\left|y-y_\fc\right|}.
  \end{align*}
  Consequently, by the same argument used for $\Gamma(y,0)$, we deduce that $\partial_{\fc_r}\Gamma(y,0)\in H_y^2$ and $\partial_y^2\partial_{\fc_r}\Gamma(y,0)\in L_y^\infty$. Since \(\Gamma(y,0)\) solves \eqref{eq-gamma} for \(\fc_r=0\), the derivative \(\partial_{\fc_r}\Gamma(y,0)\) satisfies the following equation at \(\fc_r=0\):
  \begin{align}\label{eq-parcr-gamma}
    \left(b(y)-\fc_r\right)\left(\frac{d^2}{dy^2}-1\right)\psi(y)-b''(y)\psi(y)=\left(\frac{d^2}{dy^2}-1\right)\Gamma(y,0).
  \end{align} Define
  \begin{align*}
    \mathfrak w(t,y)\eqdef t\left(\frac{d^2}{dy^2}-1\right)\Gamma(y,0)+i\left(\frac{d^2}{dy^2}-1\right)\partial_{\fc_r}\Gamma(y,0).
  \end{align*}
  Then by \eqref{eq-gamma} and \eqref{eq-parcr-gamma}, we obtain
  \begin{align*}
    &\partial_t \mathfrak w(t,y)=\left(\frac{d^2}{dy^2}-1\right)\Gamma(y,0),
  \end{align*}
  and
  \begin{align*}
    &b(y)\mathfrak w(t,y)-b''(y)\left(\partial_y^2-1\right)^{-1}\mathfrak w(t,y)=t\left(\left(b(y)-\fc_r\right)\left(\frac{d^2}{dy^2}-1\right)\Gamma(y,0)-b''(y)\Gamma(y,0)\right)\\
    &\qquad +i\left(\left(b(y)-\fc_r\right)\left(\frac{d^2}{dy^2}-1\right)\partial_{\fc_r}\Gamma(y,0)-b''(y)\partial_{\fc_r}\Gamma(y,0)\right)=i\left(\frac{d^2}{dy^2}-1\right)\Gamma(y,0).
  \end{align*}
  That is, $\mathfrak w(t,y)$ satisfies mode $k=1$ linear Euler system
  \begin{align*}
    \partial_t \omega(t,y)+i\mathcal{R}\omega(t,y)=0,\quad \text{with}\quad \mathcal{R}=\mathcal{R}_{b,1}=b(y)\text{Id}-b''(y)\left(\partial_y^2-1\right)^{-1}.
  \end{align*}
  Setting $\omega_{\text{in}}(y)=i\left(\frac{d^2}{dy^2}-1\right)\partial_{\fc_r}\Gamma(y,0)$, then $\omega_{\text{in}}(y)\in L^2\cap L^\infty$. Thus, we have constructed the required solution.

\end{proof}

\subsection{Instability for the viscous problem}

After proving the inviscid case, we provide a brief proof for the viscous case.

\begin{proof}[Proof of Corollary \ref{cor}]
  Recall that
  \begin{align*}
  &\mathcal R_{b,k}=b(y) \mathrm{Id}-b''(y)(\pa_y^2-k^2)^{-1},\quad \mathcal R=\mathcal R_{b,1},\quad \mathcal O_{b,k,\nu}=ik \mathcal R_{b,k}-\nu \left(\pa_y^2-k^2\right).
\end{align*}
  For the viscous case, given $\omega_{\text{in}}(y)$, we define $\tilde{\omega}(t,y)$  by
  \begin{align*}
    \tilde{\omega}(t,y)=e^{-\mathcal{O}_{b,1,\nu}t}\omega_{\text{in}}(y)-e^{-i\mathcal{R}t}\omega_{\text{in}}(y),
  \end{align*}
  which represents the difference between the viscous solution and the inviscid one.

  We first prove that $ e^{-\mathcal{O}_{b,1,\nu}t}\omega_{\text{in}}(y)\in H^2$.
  This function is the solution to the $k=1$ mode of \eqref{eq-linNS-k}, namely:
  \begin{equation}\label{eq-linns-1}
    \begin{aligned}
      \partial_t \omega(t,y)
      +i b(y) \omega(t,y)-ib''(y)(\pa_y^2-1)^{-1}\omega(t,y)-\nu \left(\pa_y^2-1\right)\omega(t,y)=0.
    \end{aligned}
  \end{equation}
  Taking the inner product of \eqref{eq-linns-1} with $\bar{\omega}(t,y)$, integrating by parts, and adding the conjugate equation, we obtain
  \begin{align*}
    &\int_{\mathbb{R}}\partial_t \omega(t,y)\cdot\bar{\omega}(t,y)
    +\partial_t \bar{\omega}(t,y)\cdot \omega(t,y)
    +\nu\left|\partial_y\omega(t,y)\right|^2
    +\nu\left|\omega(t,y)\right|^2 dy\\
    &\quad =\int_{\mathbb{R}}ib''(y)(\pa_y^2-1)^{-1}\omega(t,y)\cdot \bar{\omega}(t,y)
    -ib''(y)(\pa_y^2-1)^{-1} \bar{\omega}(t,y)\cdot \omega(t,y)dy,
  \end{align*}
  where both sides are real-valued.  By Plancherel’s theorem, we have  $\left\|(\pa_y^2-1)^{-1}\omega\right\|_{L^2}\le \left\|\omega\right\|_{L^2}$. Consequently,
  \begin{equation}\label{est-exp1}
    \begin{aligned}
      \frac{d}{dt}\left\|\omega(t,y)\right\|_{L^2}^2
      \le 2\left\|b''(y)\right\|_{L^\infty}\left\|\omega(t,y)\right\|_{L^2}^2.
    \end{aligned}
  \end{equation}
  Furthermore, noting that $\left|b'(y)\right|$ is uniformly bounded, we apply $\partial_y$ and $\partial_y^2$ to \eqref{eq-linns-1} and repeat the above argument to obtain
  \begin{equation}\label{est-exp2}
    \begin{aligned}
      \frac{d}{dt}\left\|\partial_y\omega(t,y)\right\|_{L^2}^2
      \lesssim \left\|\omega(t,y)\right\|_{H^2}^2,\quad
      \frac{d}{dt}\left\|\partial_y^2\omega(t,y)\right\|_{L^2}^2
      \lesssim \left\|\omega(t,y)\right\|_{H^2}^2.
    \end{aligned}
  \end{equation}
  Here, we need $b''(y)\in H^3$ to ensure the boundedness of $b^{(4)}(y)$.
  Combining \eqref{est-exp1} and \eqref{est-exp2}, and using Gronwall's inequality, we conclude that for $0\le t\le T$,
  \begin{equation}\label{est-exp}
    \begin{aligned}
      \left\| e^{-\mathcal{O}_{b,1,\nu}t}\omega_{\text{in}}(y)\right\|_{H^2}\le Ce^{Ct}\left\|\omega_{\text{in}}\right\|_{H^2},
    \end{aligned}
  \end{equation}
where $C$ is independent of $\nu$. In particular, the same estimate also holds for $\mathcal R$.

The function $\tilde{\omega}(t,y)$ satisfies
  \begin{align*}
      \partial_t \tilde\omega(t,y)
      +i b(y) \tilde\omega(t,y)&-ib''(y)(\pa_y^2-1)^{-1}\tilde\omega(t,y)\\
      &\qquad -\nu \left(\pa_y^2-1\right)\tilde\omega(t,y)=\nu \left(\pa_y^2-1\right)e^{-i\mathcal{R}t}\omega_{\text{in}}(y),
  \end{align*}
   with initial condition $\tilde\omega(0,y)=0$.
By Duhamel's principle, we have
\begin{align*}
    \tilde{\omega}(t,y)=\nu\int_0^t e^{-\mathcal{O}_{b,1,\nu}(t-s)}\left(\pa_y^2-1\right)e^{-i\mathcal{R}s}\omega_{\text{in}}(y)ds.
\end{align*}
Applying estimate \eqref{est-exp} shows that for  $0\le t\le T$, the $L^2$ norm of the above integral is uniformly bounded and converges to $0$ as $\nu\to0$. Therefore, for sufficiently small $\nu$, we have
\begin{align*}
   \left\|e^{-\mathcal{O}_{b,1,\nu}t}\omega_{\text{in}}(y)\right\|_{L^2}\ge \frac{1}{2}\left\|e^{-i\mathcal{R}t}\omega_{\text{in}}(y)\right\|_{L^2}.
\end{align*}
Finally, combining this inequality with Theorem \ref{thm} completes the proof of Corollary \ref{cor}.

 \end{proof}

\begin{appendix}
\section{Existence of shear flow with a multiple embedded eigenvalue}\label{appendix-B}
In this part, we give an example of a shear flow that satisfies Assumption \ref{assum} and possesses  a multiple embedded eigenvalue.
Let
\begin{align*}
  b_{N}(y)=y+N \left(\int_{0}^y \gamma_0e^{-\frac{z^2}{\gamma^2_0}} dz -\int_{0}^y \gamma_0\gamma^2_1e^{-\frac{z^2}{\gamma^2_0\gamma^2_1}} dz\right).
\end{align*}
Here $\gamma_0,\gamma_1>0$ are small constants, and $N\approx 1$ will be determined later.

Define
\begin{align*}
  L_N=-\frac{d^2}{dy^2}+\frac{b''_{N}(y)}{b_{N}(y)},\quad
  \lambda_N =\min_{\substack{\left\|\phi\right\|_{L^2}=1\\\phi\in H^2}}<L_N\phi,\phi>
  =\min_{\substack{\left\|\phi\right\|_{L^2}=1\\\phi\in H^2}}\left(\left\|\phi'\right\|_{L^2}^2
  +\int_{\mathbb{R}}\frac{b''_{N}(y)}{b_{N}(y)}\phi^2(y)dy\right).
\end{align*}
By our setting,  $\frac{b''_{N}(y)}{b_{N}(y)}$ is a compact perturbation of $-\frac{d^2}{dy^2}$,
so it has the same essential spectrum as $-\frac{d^2}{dy^2}$ and  $\lambda_N$ is its first negative eigenvalue.

Following the proof of Lemma 2.6 in \cite{LiZhao2025}, we choose $N\approx 1$ such that $\lambda_N=1$.
Consequently, the operator \(L_N\) possesses an eigenvalue \(1\), and hence the Rayleigh operator
\[
\mathcal{R}_{b_N,1}=b_{N}(y)\,\text{Id}-b''_{N}(y)\bigl(\partial_y^2-1\bigr)^{-1}
\]
has an embedded eigenvalue $0$.
Thus $b_{N}(y)$ satisfies Assumption  \ref{assum}.

By Lemma \ref{lem-iff-emb} we have \(\mathcal{J}_1(0)=\mathcal{J}_2(0)=0\).
Moreover, the oddness of \(b_{N}\) gives \(\partial_{\fc_r}\mathcal{J}_1(0)=0\), and the condition \(b_{N,\gamma_0,\gamma_1}'''(0)=0\) implies \(\partial_{\fc_r}\mathcal{J}_2(0)=0\).
Proposition \ref{prop-linear-growth} then shows that \(0\) is a multiple embedded eigenvalue of \(\mathcal{R}_{b_N,1}\).

\end{appendix}
\section*{Acknowledgements}
H. Li is partially supported by NSF of China under Grant 12501287. S. Ren is partially supported by NSF of China under Grant 12101551 and Natural Science Foundation of Zhejiang Province under Grant LMS26A010014. Y. Wang is partially supported by NSF of China under Grant 12471200.

\bibliographystyle{siam.bst}
\bibliography{references.bib}

@book{krantz1999handbook,
  title={Handbook of complex variables},
  author={Krantz, Steven George and Kress, Steve and Kress, R},
  year={1999},
  publisher={Springer}
}

@article {LiZhao2025,
    AUTHOR = {Li, Hui and Zhao, Weiren},
     TITLE = {Viscosity driven instability of shear flows without
              boundaries},
   JOURNAL = {J. Math. Pures Appl. (9)},
  FJOURNAL = {Journal de Math\'{e}matiques Pures et Appliqu\'{e}es.
              Neuvi\`eme S\'{e}rie},
    VOLUME = {200},
      YEAR = {2025},
     PAGES = {Paper No. 103724, 28},
}

@article{reynolds1883,
  title={XXIX. An experimental investigation of the circumstances which determine whether the motion of water shall be direct or sinuous, and of the law of resistance in parallel channels},
  author={Reynolds, Osborne},
  journal={Philosophical Transactions of the Royal society of London},
  number={174},
  pages={935--982},
  year={1883},
  publisher={The Royal Society London}
}

@article{trefethen1993,
  title={Hydrodynamic stability without eigenvalues},
  author={Trefethen, Lloyd N and Trefethen, Anne E and Reddy, Satish C and Driscoll, Tobin A},
  journal={Science},
  volume={261},
  number={5121},
  pages={578--584},
  year={1993},
  publisher={American Association for the Advancement of Science}
}

@article{dikii1964,
  title={On the stability of plane-parallel Couette flow},
  author={Dikii, LA},
  journal={Journal of Applied Mathematics and Mechanics},
  volume={28},
  number={2},
  pages={479--483},
  year={1964},
  publisher={Elsevier}
}

@article{fjortoft1950application,
  title={Application of integral theorems in deriving criteria of stability for laminar flows and for the baroclinic circular vortex},
  author={Fj{\o}rtoft, Ragnar},
  journal={Geofys. Publ.},
  volume={17},
  number={6},
  pages={1--52},
  year={1950}
}

@article{sinambela2023transition,
    AUTHOR = {Sinambela, Daniel and Zhao, Weiren},
     TITLE = {The transition to instability for stable shear flows in
              inviscid fluids},
   JOURNAL = {J. Funct. Anal.},
  FJOURNAL = {Journal of Functional Analysis},
    VOLUME = {289},
      YEAR = {2025},
    NUMBER = {2},
     PAGES = {Paper No. 110905, 63},
}

@article{li2011resolution,
  title={A resolution of the Sommerfeld paradox},
  author={Li, Y Charles and Lin, Zhiwu},
  journal={SIAM journal on mathematical analysis},
  volume={43},
  number={4},
  pages={1923--1954},
  year={2011},
  publisher={SIAM}
}

@article{lin2003instability,
  title={Instability of some ideal plane flows},
  author={Lin, Zhiwu},
  journal={SIAM journal on mathematical analysis},
  volume={35},
  number={2},
  pages={318--356},
  year={2003},
  publisher={SIAM}
}

@article {LiZhao2024,
    AUTHOR = {Li, Hui and Zhao, Weiren},
     TITLE = {Asymptotic stability in the critical space of 2{D} monotone
              shear flow in the viscous fluid},
   JOURNAL = {Comm. Math. Phys.},
  FJOURNAL = {Communications in Mathematical Physics},
    VOLUME = {405},
      YEAR = {2024},
    NUMBER = {11},
     PAGES = {Paper No. 267, 50},
}

@article {Jiahao2022,
    AUTHOR = {Jia, Hao},
     TITLE = {Uniform {L}inear {I}nviscid {D}amping and {E}nhanced
              {D}issipation {N}ear {M}onotonic {S}hear {F}lows in {H}igh
              {R}eynolds {N}umber {R}egime ({I}): {T}he {W}hole {S}pace
              {C}ase},
   JOURNAL = {J. Math. Fluid Mech.},
  FJOURNAL = {Journal of Mathematical Fluid Mechanics},
    VOLUME = {25},
      YEAR = {2023},
    NUMBER = {3},
     PAGES = {42},
      ISSN = {1422-6928},
   MRCLASS = {76E05 (76D03 76D05)},
  MRNUMBER = {4589745}
}

@article{WeiZhangZhu2020cmp,
	author = {Wei, Dongyi and Zhang, Zhifei and Zhu, Hao},
	journal = {Communications in Mathematical Physics},
	number = {1},
	pages = {127--174},
	publisher = {Springer},
	title = {Linear Inviscid Damping for the $\beta$-Plane Equation},
	volume = {375},
	year = {2020}}

@article {RenZhang2025,
    AUTHOR = {Ren, Siqi and Zhang, Zhifei},
     TITLE = {Linear inviscid damping in the presence of an embedding
              eigenvalue},
   JOURNAL = {Comm. Math. Phys.},
  FJOURNAL = {Communications in Mathematical Physics},
    VOLUME = {406},
      YEAR = {2025},
    NUMBER = {2},
     PAGES = {Paper No. 39, 70},
      ISSN = {0010-3616,1432-0916},
}

@article{LiMasmoudiZhao2022critical,
    AUTHOR = {Li, Hui and Masmoudi, Nader and Zhao, Weiren},
     TITLE = {A dynamical approach to the study of instability near
              {C}ouette flow},
   JOURNAL = {Comm. Pure Appl. Math.},
  FJOURNAL = {Communications on Pure and Applied Mathematics},
    VOLUME = {77},
      YEAR = {2024},
    NUMBER = {6},
     PAGES = {2863--2946},
       }

@article{Sommerfeld1908,
	author = {Sommerfeld, A},
	journal = {Atti del IV Congresso internazionale dei matematici},
	pages = {116--124},
	title = {Ein Beitrag zur hydrodynamischen Erkl{\"a}rung der turbulenten Fl{\"u}ssigkeitsbewegung},
	year = {1908}}

@article {CZ2019,
    AUTHOR = {Coti Zelati, Michele and Zillinger, Christian},
     TITLE = {On degenerate circular and shear flows: the point vortex and
              power law circular flows},
   JOURNAL = {Comm. Partial Differential Equations},
  FJOURNAL = {Communications in Partial Differential Equations},
    VOLUME = {44},
      YEAR = {2019},
    NUMBER = {2},
     PAGES = {110--155},
      ISSN = {0360-5302,1532-4133},
}

@article{BCV2017,
	author = {Bedrossian, Jacob and Coti Zelati, Michele and Vicol, Vlad},
	fjournal = {Annals of PDE. Journal Dedicated to the Analysis of Problems from Physical Sciences},
	issn = {2524-5317},
	journal = {Ann. PDE},
	number = {1},
	pages = {Paper No. 4, 192},
	title = {Vortex axisymmetrization, inviscid damping, and vorticity depletion in the linearized 2{D} {E}uler equations},
	volume = {5},
	year = {2019}}

@article{BM2015,
	author = {Bedrossian, Jacob and Masmoudi, Nader},
	fjournal = {Publications Math\'{e}matiques. Institut de Hautes \'{E}tudes Scientifiques},
	issn = {0073-8301},
	journal = {Publ. Math. Inst. Hautes \'{E}tudes Sci.},
	pages = {195--300},
	title = {Inviscid damping and the asymptotic stability of planar shear flows in the 2{D} {E}uler equations},
	volume = {122},
	year = {2015}}

@article{Case1960,
	author = {Case, K. M.},
	fjournal = {The Physics of Fluids},
	issn = {0031-9171},
	journal = {Phys. Fluids},
	pages = {143--148},
	title = {Stability of inviscid plane {C}ouette flow},
	volume = {3},
	year = {1960}}

@article{deng2023long,
  title={Long-Time Instability of the Couette Flow in Low Gevrey Spaces},
  author={Deng, Yu and Masmoudi, Nader},
  journal={Communications on Pure and Applied Mathematics},
  volume={76},
  number={10},
  pages={2804--2887},
  year={2023},
  publisher={Wiley Online Library}
}

@article{DZ2021,
	author = {Deng, Yu and Zillinger, Christian},
	fjournal = {Archive for Rational Mechanics and Analysis},
	issn = {0003-9527},
	journal = {Arch. Ration. Mech. Anal.},
	number = {1},
	pages = {643--700},
	title = {Echo chains as a linear mechanism: norm inflation, modified exponents and asymptotics},
	volume = {242},
	year = {2021}}

@article {Chapman2002,
    AUTHOR = {Chapman, S. Jonathan},
     TITLE = {Subcritical transition in channel flows},
   JOURNAL = {J. Fluid Mech.},
  FJOURNAL = {Journal of Fluid Mechanics},
    VOLUME = {451},
      YEAR = {2002},
     PAGES = {35--97},
      ISSN = {0022-1120,1469-7645},
}

@article{GNRS2020,
	author = {Grenier, Emmanuel and Nguyen, Toan T. and Rousset, Fr\'{e}d\'{e}ric and Soffer, Avy},
	fjournal = {Journal of Functional Analysis},
	issn = {0022-1236},
	journal = {J. Funct. Anal.},
	number = {3},
	pages = {108339, 27},
	title = {Linear inviscid damping and enhanced viscous dissipation of shear flows by using the conjugate operator method},
	volume = {278},
	year = {2020}}

@article{Howard1961,
	author = {Howard, Louis N.},
	fjournal = {Journal of Fluid Mechanics},
	issn = {0022-1120},
	journal = {J. Fluid Mech.},
	pages = {509--512},
	title = {Note on a paper of {J}ohn {W}. {M}iles},
	volume = {10},
	year = {1961}}

@article{IJ2020,
    AUTHOR = {Ionescu, Alexandru D. and Jia, Hao},
     TITLE = {Non-linear inviscid damping near monotonic shear flows},
   JOURNAL = {Acta Math.},
  FJOURNAL = {Acta Mathematica},
    VOLUME = {230},
      YEAR = {2023},
    NUMBER = {2},
     PAGES = {321--399},
       }

@article{Jia2020siam,
	author = {Jia, Hao},
	fjournal = {SIAM Journal on Mathematical Analysis},
	issn = {0036-1410},
	journal = {SIAM J. Math. Anal.},
	number = {1},
	pages = {623--652},
	title = {Linear inviscid damping near monotone shear flows},
	volume = {52},
	year = {2020}}

@article{IonescuJia2020cmp,
	author = {Ionescu, Alexandru D. and Jia, Hao},
	fjournal = {Communications in Mathematical Physics},
	issn = {0010-3616},
	journal = {Comm. Math. Phys.},
	number = {3},
	pages = {2015--2096},
	title = {Inviscid damping near the {C}ouette flow in a channel},
	volume = {374},
	year = {2020}}

@article{Jia2020arma,
	author = {Jia, Hao},
	fjournal = {Archive for Rational Mechanics and Analysis},
	issn = {0003-9527},
	journal = {Arch. Ration. Mech. Anal.},
	number = {2},
	pages = {1327--1355},
	title = {Linear inviscid damping in {G}evrey spaces},
	volume = {235},
	year = {2020}}

@article{Kelvin1887,
	author = {Kelvin, Lord},
	journal = {Phil. Mag},
	number = {5},
	pages = {188--196},
	title = {Stability of fluid motion: rectilinear motion of viscous fluid between two parallel plates},
	volume = {24},
	year = {1887}}

@article{LinZeng2011,
	author = {Lin, Zhiwu and Zeng, Chongchun},
	fjournal = {Archive for Rational Mechanics and Analysis},
	issn = {0003-9527},
	journal = {Arch. Ration. Mech. Anal.},
	number = {3},
	pages = {1075--1097},
	title = {Inviscid dynamical structures near {C}ouette flow},
	volume = {200},
	year = {2011}}

@article{MasmoudiZhao2020,
    AUTHOR = {Masmoudi, Nader and Zhao, Weiren},
     TITLE = {Nonlinear inviscid damping for a class of monotone shear flows
              in a finite channel},
   JOURNAL = {Ann. of Math. (2)},
  FJOURNAL = {Annals of Mathematics. Second Series},
    VOLUME = {199},
      YEAR = {2024},
    NUMBER = {3},
     PAGES = {1093--1175},
       }

@article{Orr1907,
	author = {Orr, William M'F},
	journal = {Proc. Ir. Acad. Sect. A, Math Astron. Phys. Sci},
	number = {9},
	pages = {9--68},
	title = {The stability or instability of the steady motions of a perfect liquid and of a viscous liquid},
	volume = {27},
	year = {1907}}

@article{Rayleigh1880,
	author = {Rayleigh, Lord},
	fjournal = {Proceedings of the London Mathematical Society},
	issn = {0024-6115},
	journal = {Proc. Lond. Math. Soc.},
	pages = {57--70},
	title = {On the {S}tability, or {I}nstability, of certain {F}luid {M}otions},
	volume = {11},
	year = {1879/80}}

@article{RosSat1966,
	author = {Rosencrans, SI and Sattinger, DH},
	journal = {Journal of Mathematics and Physics},
	number = {1-4},
	pages = {289--300},
	publisher = {Wiley Online Library},
	title = {On the spectrum of an operator occurring in the theory of hydrodynamic stability},
	volume = {45},
	year = {1966}}

@article{WeiZhangZhao2018,
	author = {Wei, Dongyi and Zhang, Zhifei and Zhao, Weiren},
	fjournal = {Communications on Pure and Applied Mathematics},
	issn = {0010-3640},
	journal = {Comm. Pure Appl. Math.},
	number = {4},
	pages = {617--687},
	title = {Linear inviscid damping for a class of monotone shear flow in {S}obolev spaces},
	volume = {71},
	year = {2018}}

@article{WeiZhangZhao2019,
	author = {Wei, Dongyi and Zhang, Zhifei and Zhao, Weiren},
	fjournal = {Annals of PDE. Journal Dedicated to the Analysis of Problems from Physical Sciences},
	issn = {2524-5317},
	journal = {Ann. PDE},
	number = {1},
	pages = {Paper No. 3, 101},
	title = {Linear inviscid damping and vorticity depletion for shear flows},
	volume = {5},
	year = {2019}}

@article{WeiZhangZhao2020,
	author = {Wei, Dongyi and Zhang, Zhifei and Zhao, Weiren},
	fjournal = {Advances in Mathematics},
	issn = {0001-8708},
	journal = {Adv. Math.},
	pages = {106963, 103},
	title = {Linear inviscid damping and enhanced dissipation for the {K}olmogorov flow},
	volume = {362},
	year = {2020}}

@article{Zillinger2017,
	author = {Zillinger, Christian},
	fjournal = {Transactions of the American Mathematical Society},
	issn = {0002-9947},
	journal = {Trans. Amer. Math. Soc.},
	mrclass = {76E05 (35B35 35P25 35Q31 35Q35 76B03)},
	number = {12},
	pages = {8799--8855},
	title = {Linear inviscid damping for monotone shear flows},
	volume = {369},
	year = {2017}}

@article{Zillinger2017jde,
	author = {Zillinger, Christian},
	fjournal = {Journal of Differential Equations},
	issn = {0022-0396},
	journal = {J. Differential Equations},
	number = {11},
	pages = {7856--7899},
	title = {On circular flows: linear stability and damping},
	volume = {263},
	year = {2017}}

@article {RWWZ2023,
    AUTHOR = {Ren, Siqi and Wang, Luqi and Wei, Dongyi and Zhang, Zhifei},
     TITLE = {Linear inviscid damping and vortex axisymmetrization via the
              vector field method},
   JOURNAL = {J. Funct. Anal.},
  FJOURNAL = {Journal of Functional Analysis},
    VOLUME = {285},
      YEAR = {2023},
    NUMBER = {1},
     PAGES = {Paper No. 109919, 41},
      ISSN = {0022-1236,1096-0783},
}

@article {Ren2025,
    AUTHOR = {Ren, Siqi},
     TITLE = {Linear inviscid damping for monotonic shear flow in unbounded
              domain},
   JOURNAL = {J. Differential Equations},
  FJOURNAL = {Journal of Differential Equations},
    VOLUME = {434},
      YEAR = {2025},
     PAGES = {Paper No. 113287, 34},
}

@article {IonJia2022,
    AUTHOR = {Ionescu, Alexandru D. and Jia, Hao},
     TITLE = {Linear vortex symmetrization: the spectral density function},
   JOURNAL = {Arch. Ration. Mech. Anal.},
  FJOURNAL = {Archive for Rational Mechanics and Analysis},
    VOLUME = {246},
      YEAR = {2022},
    NUMBER = {1},
     PAGES = {61--137},
      ISSN = {0003-9527,1432-0673},
}

@article {BCJ2026,
    AUTHOR = {Beekie, Rajendra and Chen, Shan and Jia, Hao},
     TITLE = {Uniform {V}orticity {D}epletion and {I}nviscid {D}amping for
              {P}eriodic {S}hear {F}lows in the {H}igh {R}eynolds {N}umber
              {R}egime},
   JOURNAL = {Arch. Ration. Mech. Anal.},
  FJOURNAL = {Archive for Rational Mechanics and Analysis},
    VOLUME = {250},
      YEAR = {2026},
    NUMBER = {1},
     PAGES = {Paper No. 7},
}

@article{BG2024,
  title={Asymptotic behavior of solutions of the linearized Euler equations near a shear layer},
  author={Bian, Dongfen and Grenier, Emmanuel},
  journal={arXiv preprint arXiv:2403.13549},
  year={2024}
}

@article{ionescu2024stability,
  title={On the stability of shear flows in bounded channels, II: non-monotonic shear flows},
  author={Ionescu, Alexandru D and Iyer, Sameer and Jia, Hao},
  journal={Vietnam Journal of Mathematics},
  volume={52},
  number={4},
  pages={851--882},
  year={2024},
  publisher={Springer}
}

@article {Zillinger2016,
    AUTHOR = {Zillinger, Christian},
     TITLE = {Linear inviscid damping for monotone shear flows in a finite
              periodic channel, boundary effects, blow-up and critical
              {S}obolev regularity},
   JOURNAL = {Arch. Ration. Mech. Anal.},
  FJOURNAL = {Archive for Rational Mechanics and Analysis},
    VOLUME = {221},
      YEAR = {2016},
    NUMBER = {3},
     PAGES = {1449--1509},
}

@article{stepin1996rayleigh,
  title={The Rayleigh hydrodynamical problem: a theorem on eigenfunction expansion and the stability of plane-parallel flows},
  author={Stepin, Stanislav Anatol'evich},
  journal={Izvestiya: Mathematics},
  volume={60},
  number={6},
  pages={1293},
  year={1996},
  publisher={IOP Publishing}
}

\end{document}